\newtheorem{theorem}{Theorem}
\newtheorem{remark}{Remark}
\newtheorem{lemma}{Lemma}
\newtheorem{corollary}{Corollary}
\def\D{\displaystyle} 
\newcommand{\hx}{{\mathbf{e}}_1}
\newcommand{\hy}{{\mathbf{e}}_2}
\newcommand{\hz}{{\mathbf{e}}_3}
\newcommand{\bfO}{\boldsymbol{0}}
\newcommand{\bfx}{\mathbf{x}}
\newcommand{\bfy}{\mathbf{y}}
\newcommand{\bfz}{\mathbf{z}}
\newcommand{\bfp}{\mathbf{p}}
\newcommand{\bfq}{\mathbf{q}}
\newcommand{\bfu}{\mathbf{u}}
\newcommand{\bfv}{\mathbf{v}}
\newcommand{\bfw}{\mathbf{w}}
\newcommand{\bff}{\mathbf{f}}
\newcommand{\bfg}{\mathbf{g}}
\newcommand{\bfh}{\mathbf{h}}
\newcommand{\bfE}{\mathbf{E}}
\newcommand{\bfU}{\mathbf{U}}
\newcommand{\bfV}{\mathbf{V}}
\newcommand{\bfQ}{\mathbf{Q}}
\newcommand{\bfH}{\mathbf{H}}
\newcommand{\bfM}{\mathbf{M}}
\newcommand{\bfN}{\mathbf{N}}
\newcommand{\bbG}{\mathbb{G}}
\newcommand{\bbI}{\mathbb{I}}
\newcommand{\bbJ}{\mathbb{J}}
\newcommand{\bbR}{\mathbb{R}}
\newcommand{\nc}{\boldsymbol{\nu}_0} 
\newcommand{\nd}{\boldsymbol{\nu}_D}        
\def\pd{\partial}                     
\def\V{\Vert}                         
\def\la{\langle}                      
\def\ra{\rangle}                      
\def\cl{\nabla\times}                 
\def\clx{\nabla_{\bfx}\times}         
\def\cly{\nabla_{\bfy}\times}         
\def\dv{\nabla\cdot}                  
\def\gd{\nabla}                       
\def\sgd{\nabla_{\Sigma}}             
\def\vgd{\vec{\nabla}_{\Sigma}\times} 
\def\dt{\Delta}                       
\newcommand{\tcl}{\mathbf{curl}}
\newcommand{\bfX}{\mathbf{X}}
\begin{document}

\title{Near field linear sampling method for an inverse problem in an electromagnetic waveguide}

\author{Peter Monk$^1$ \and Virginia Selgas$^2$ \and Fan Yang$^2$}
\date{%
   \small{$^1$Dept. of Mathematical Sciences, University of Delaware, Newark DE 19716, USA\\%
    $^2$Dept. de Matem\'aticas, Universidad de Oviedo, EPIG, c/Luis Ortiz Berrocal s/n,  33203 Gij\'on, Spain\\%
    $^3$Dept. of Mathematics, California Polytechnic State University, San Luis Obispo, CA 93407-0403, USA\\
 email: \texttt{monk@udel.edu}, \texttt{selgasvirginia@uniovi.es}, \texttt{fayang@calpoly.edu}\\[2ex]}%
    \today
}

\maketitle

\vspace{10pt}

\begin{abstract}
We consider the problem of determining the shape and location of an unknown penetrable object in a perfectly conducting electromagnetic waveguide.  The inverse problem is posed in the frequency domain and uses multistatic data in the near field. In particular, we assume that we are given measurements of the electric scattered field due to point sources on a cross-section of the waveguide and measured  on the same cross-section, which  is away from the scatterer but not in the far field.

The problem is solved by using the Linear Sampling Method (LSM)  and we also discuss the generalized LSM.  We start by
giving a brief discussion of the direct problem and its associated interior transmission problem. Then, we adapt and analyze the LSM to deal with the inverse problem. This {extends the work on the LSM for perfectly conducting scatterers  in a waveguide by one of us (Yang) to the detection of penetrable objects}.  We provide several useful results concerning reciprocity and
the density of fields due to single layer potentials.  We also prove the standard results for the LSM in the waveguide context. Finally we give numerical results to show the performance of the method for simple shapes.
\end{abstract}

%
%
%
%
%

\section{Introduction}
The detection or characterization of inaccessible objects in closed waveguides has received considerable attention in
recent years.  Much of the work has focused on the scalar acoustic problem. Examples include the use of time-reversal imaging~\cite{RouxFinck-utrasonicWG}, linearization methods~\cite{DediuMacLaughlin}, near-field measurements inside periodic waveguides~\cite{SunZheng-periodicWG},  and volume integral equations  with fixed point iteration~\cite{Yurys-IPguides}.  We  apply the Linear Sampling Method (LSM) in the frequency domain (see \cite{ColtonKress-IP} for background to this method).  In the context of waveguides, the use of the LSM was initiated by 
L. Bourgeois and E. Luneville~\cite{BLFD} who demonstrated the possibility of using the LSM to detect impenetrable obstacles. Two of us extended this work to penetrable obstacles and three dimensions~\cite{MonkSelgasFD}. {Then, one of us, F. Yang~\cite{FanPhD}, showed that the LSM can be applied to
reconstruct PEC scatterers in a closed waveguide.} It is the latter that is the main background for our current paper.  The LSM can also be used with time domain multistatic data, either by using the Fourier transform to move to the frequency domain~\cite{BourgTD} or by working in the time domain directly~\cite{MonkSelgasTD} but we shall not discuss the time domain here.  

It should be noted that far field data does not uniquely determine the scatterer  in an acoustic waveguide~\cite{ASBinvisible}.  {Using near field measurements as in this paper, we will prove that the inverse scattering problem has a unique solution}. Although we are working in the near field, the decay of evanescent modes implies that higher modes cannot be observed in the presence of noise. So non-uniqueness may be a practical problem even in the near field.  However, at least for the simple shapes examined in the  papers {discussed in the previous paragraph}, this does not seem to cause an issue (also multi-frequency or time domain data might ameliorate the problem).  It is reasonable to conjecture that uniqueness is also an issue for the electromagnetic inverse problem using far field measurements, but so far this has not been studied to our knowledge.

This paper is devoted to extending{~\cite{BLFD,MonkSelgasFD,FanPhD} to penetrable }electromagnetic scattering in the frequency domain using single-frequency multistatic data.  In particular, we study the model problem of determining the shape and the location of a bounded penetrable obstacle located in a tubular waveguide.  By a tubular waveguide we mean that the waveguide has a cross-section represented by a convex, open, bounded domain
$\Sigma\subseteq \mathbb{R}^{2}$ and the waveguide is the infinite domain $W=\Sigma\times\mathbb{R}$ having boundary $\Gamma$. The unknown scatterer occupies a bounded, open and Lipschitz domain $D\subset\Sigma\times\mathbb{R}$.  We assume that a probing 
electromagnetic field is due to point sources with arbitrary polarization  {that are} located on a surface 
\[\Sigma_r= \{ x\in\mathbb{R}^3\; ; \;(x_1,x_2)\in\Sigma , \, 
x_3=r  \} ,
\]
where $r$ is such that $\Sigma_r\cap \overline{D}=\emptyset$.  We define $\nc$ to be the normal to $\Sigma_r$ pointing in the direction of increasing $z$.
We also assume that measurements of the polarization, phase and amplitude of the resulting scattered field can be made on this surface.  From these multistatic data, we seek to determine the boundary of $D$ denoted $\partial D$.  We shall define the problem in more detail in the next section, but, to summarize the inverse problem we assume that the
scattered field $\bfu^s(\bfx;\bfy,\bfp)$ due to a point source at $\bfy\in\Sigma_r$ with polarization $\bfp$
and measured at $\bfx\in\Sigma_r$ is known (the data may also be corrupted with random noise, and in practice is only known for a finite number of source and receiver points on $\Sigma_r$.  From this data it is desired to determine $\partial D$.

The LSM  uses the near field operator defined for 
\[
\bfg\in L^2_T(\Sigma_r)=\{\bfg\in L^2(\Sigma_r)\;; \; \bfg\cdot\nc =0\mbox{ a.e. on }\Sigma_r\}
\]
by 
\begin{equation}\label{inv:nearfieldop} 
N\bfg(\bfx) \, = \, \int_{\Sigma_r}\nc(\bfx)\times\bfu^s(\bfx;\bfy,\bfg(\bfy))\, dS_{\bfy} \quad\mbox{for a.e. }\bfx\in\Sigma _r \, .
\end{equation}
We then consider the Near Field  Equation (NFE)
for $\bfg_z\in L^2_T(\Sigma_r)$ given by
\begin{equation} \label{inv:lsm:nfe}
    N\bfg_{\bfz}(\bfx) 
		= \nc\times\bfu^i(\bfx;\bfz,\bfq) 
\quad\mbox{a.e. } \bfx\in\Sigma _r\, ,
\end{equation}
where $\bfu^i(\bfx;\bfy,\bfp)$ is the field due to an auxiliary source point $\bfz\in W$ with polarization $\bfq$ in the {an empty}
waveguide.   Of course (\ref{inv:lsm:nfe}) is ill-posed but as usual for the LSM we shall show that there exists
an approximate solution to this equation such that $\bfz\mapsto\Vert \bfg_z\Vert_{L_T^2(\Sigma_r)}$ can be used as an indicator
function for $\partial D$ (as usual this is only a partial justification of the LSM, see \cite{ColtonKress-IP}). 

We intend
our inverse problem to be a simplified model  for applications of {inverse scattering in a waveguide   (see for example the engineering papers}~\cite{Dalarsson-PhD,SjobergLarsson}) although these applications are more complex.  Of course we are not the first to apply inverse scattering techniques to electromagnetic waveguides.  For example L. Borcea and D.-L. Nguyen~\cite{BorceaNguyen}
used reverse time migration and $\ell_1$ optimization to image objects in terminating waveguides.  This could be a
problem amenable to the application of the LSM but is not considered in our paper.  Also of interest J. Chen and G. Huang~\cite{ChenHuang} have applied a reverse time migration approach.  The closest work to this paper is the
thesis of one of us (F. Yang~\cite{FanPhD}) in which the use of the LSM to detect impenetrable objects in a waveguide
is analyzed and implemented.  {These results have not been published in an academic journal, and some}  of the results from the current paper are taken from the thesis with acknowledgement.

An outline of the paper is as follows.  In the next section (Sec.~\ref{forw}) we briefly discuss the forward problem. In Section \ref{subsec:forward:modal} we
give more details of the {well known} fundamental solution for an electromagnetic  waveguide, as well as an analysis of the
\emph{blocked waveguide} problem (or semi-infinite waveguide) in Section~\ref{wg:forward:half_pipe}.  
Existence and uniqueness for the forward problem is then summarized in Section~\ref{wg:forward:weak_vari:bdd}.
We then move on to the inverse problem in Section~\ref{wg:inverse}.  We start by recalling the dyadic Green's function for the standard scattering problem, and then in Section~\ref{wg:inverse:uniqueness}  give an uniqueness result for the  electromagnetic waveguide.  Section~\ref{wg:inverse:nearfieldop} is devoted to factoring the near field operator and deriving mapping properties, while Section~\ref{wg:inverse:lsm} presents the main result of the paper justifying the LSM for electromagnetic waveguides.  We then make some observations concerning the Generalized LSM (GLSM) in Section~\ref{glsm}. This is followed by our last major section, Section~\ref{Sec-Numresult}, in which we present some numerical results, and
we end with a brief conclusion and discussion (Section~\ref{Sec-Concl}).

Throughout this paper, we will distinguish vectors by means of boldface. Moreover, we will denote the divergence and the rotational of a regular enough vector field $\bfu$ by $\dv\bfu$ and $\cl\bfu$, respectively.

%
\section{Forward problem}\label{forw}

As discussed in the introduction, we consider an  infinite tubular waveguide $W$, generated by translates
of its cross-section $\Sigma\subseteq \mathbb{R}^{2}$. 
Note that the axis of the waveguide is parallel to the $x_3$-axis, and we can identify points in $\mathbb{R}^3=\mathbb{R}^2\times\mathbb{R}$ by writing $\mathbf{x}=(x_1,x_2,x_3)=(\hat{\mathbf{x}},x_3)$.
We then denote by $\boldsymbol{\nu} _0 := (\hat{\boldsymbol{0}},1)$, $\boldsymbol{\nu}:=(\hat{\boldsymbol{\nu}},0)$ and $\boldsymbol{\nu} _D $ the unit vector fields that are normal a.e. to $\Sigma_s:=\Sigma\times\{ s\}$ ($s\in\mathbb{R}$), $\Gamma :=\partial W=\partial\Sigma\times \mathbb{R}$ and $\partial D$, and directed to the right and outwards, respectively; see Figure \ref{fig:domain}.
We shall also denote by $\hx,\hy,\hz$ the standard unit vectors in $\bbR^3$.

\begin{figure}[h]
\begin{center}
\setlength{\unitlength}{.125mm}
\begin{picture}(950,200)
  \linethickness{0.3pt}
  \put(0,0){\line(1,0){950}}
  \put(0,200){\line(1,0){950}}
  \put(200,100){\oval(75,200)}
  \put(570,90){\oval(100,40)}
  \put(200,50){\vector(1,0){50}}
  \put(200,0){\vector(0,-1){50}} 
  \put(614,72){\vector(1,-1){35}} 
  \put(375,3){$\Gamma$}
	\put(195,100){$\Sigma_s$}
  \put(555,80){$D$}
  \put(225,60){$\boldsymbol{\nu }_0$}
  \put(210,-25){$\boldsymbol{\nu }$}
	\put(640,57){$\boldsymbol{\nu }_D$}
\end{picture}
\end{center}
\vspace*{.375cm}
\caption{A schematic 2d-view of the problem geometry:  The penetrable obstacle occupies an unknown region $D$ inside the waveguide $W$. 
}\label{fig:domain}
\end{figure}
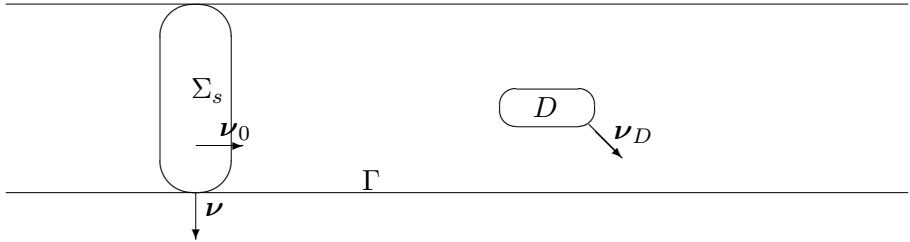

We assume that the waveguide is filled with air or vacuum; in particular, the background electric permittivity and magnetic permeability are
$$
\varepsilon _0=1/(\mu_0 c_0^2)\, Fm^{-1} \quad\mbox{ and } \quad \mu_0=4\pi 10^{-7}\, Hm^{-1} \, ,
$$
respectively. Here and in the sequel $c_0$ stands for the speed of light in vacuum.

Concerning the material which fills the scatterer $D$, we denote by  $\tilde\mu (\bfx )$, $\tilde\varepsilon (\bfx)$ and $\tilde\sigma (\bfx)$ its magnetic permeability, electric permittivity and conductivity, respectively. We assume that the magnetic permeability is constant, $\tilde\mu (\bfx )=\mu_0$, and that the conductivity is non-negative, $\tilde\sigma (\bfx)\geq 0$.
In general, $\tilde\varepsilon$ is a matrix function of position, but here we understand it as a scalar function (which is the case when the material is isotropic and uniform in all directions). The the anisotropic case  would involve no extra mathematical
difficulties. We further assume that it is piecewise smooth and bounded so there are constants $\tilde\varepsilon _0$
and $\tilde\varepsilon _1$ such that
\begin{equation*}
0<\tilde\varepsilon _0 < \tilde\varepsilon < \tilde\varepsilon _1 \quad\mbox{a.e. in } W \, .
\end{equation*}

Let us consider the time-harmonic case, and denote by $\omega$ and $k:=\omega\sqrt{\varepsilon_0\mu_0}$ the angular frequency and  the wavenumber for the background medium, respectively. We define the relative quantities $\D\varepsilon =  (\frac{\tilde\varepsilon}{\varepsilon_0} + i\frac{\tilde\sigma}{\omega\varepsilon_0} )$ and $\D\mu = \frac{\tilde\mu}{\mu_0}$; notice that $\varepsilon=\mu=1$ in the background $W\setminus\overline{D}$. Then, the time harmonic system of Maxwell's equations consists of finding the total electric and
magnetic fields denoted by the 
complex valued vector fields $\bfE\equiv \bfE (\bfx)$ and $\bfH\equiv \bfH (\bfx)${, that} satisfy  the following {Maxwell system} in a weak sense:
\begin{eqnarray}
    -ik\varepsilon \bfE - \cl \bfH &=&  \bfO  \mbox{ in }W\, , \label{time_harmonic_clH} \\[1ex]
    -ik\mu \bfH + \cl \bfE &=& \bfO \mbox{ in } W\, . \label{time_harmonic_clE}
\end{eqnarray}

Since we have assumed that the boundary of the waveguide is a perfect electric conductor (PEC, e.g. made of metal), then
\begin{equation}\label{fwd-total:bdry_shell}
    \boldsymbol{\nu}\times\bfE = \bfO \quad\mbox{ on } \Gamma \, .
\end{equation}
Using (\ref{time_harmonic_clE}) we can eliminate the unknown $\bfH$ from (\ref{time_harmonic_clH}) and rewrite the time harmonic Maxwell's system as a second order system of equations in terms of $\bfE$:
\begin{equation}
    \cl \cl\bfE - k^2\varepsilon\bfE = \bfO \quad\mbox{in } W \, , \label{fwd-total:maxwell_govern} 
\end{equation}
together with the boundary condition (\ref{fwd-total:bdry_shell}). Notice that we could similarly eliminate $\bfE$ and rewrite the system in terms of $\bfH$.

Let $\bfE^i$ denote a given incident field, which satisfies the Maxwell's system in the absence of the scatterer $D$:
\begin{eqnarray*}
    \cl\cl \bfE^i - k^2 \bfE^i = \boldsymbol{0}  \quad 
&\mbox{in } W_{(-R,R)} \, , \\[1ex]
    \boldsymbol{\nu}\times\bfE^i = \bfO &\mbox{on } \Gamma_{(-R,R)} \, ,
\end{eqnarray*}
where $W_{(-R,R)}=
\Sigma\times (-R,R)$ and $\Gamma_{(-R,R)}=
\partial\Sigma\times (-R,R)$, with $R>0$ big enough so that $\overline{D}\subset W_{(-R,R)}$. Of particular interest for this paper, we shall consider the incident field excited by an electric point source at $\bfy $ with polarization vector $\bfp \in\mathbb{R}^3$ ($|\bfp|=1$); typically, the sources will be located on a cross-section $\Sigma_{r}$ with $|r|>R$ (notice that then  $\bfy\in W\setminus \overline{D}$). 
This means that 
\begin{eqnarray*}
    \cl\cl \bfE^i - k^2 \bfE^i = \bfp\delta_{\bfy}  \quad 
&\mbox{in } W\, , \\[1ex]
    \boldsymbol{\nu}\times\bfE^i = \bfO &\mbox{on } \Gamma \, ,
\end{eqnarray*}
where $\delta_{\bfy}$ denotes the Dirac delta distribution centered at $\bfy$. 

The total field $\bfE$ is then decomposed into the incident and scattered fields, $\bfE=\bfE^i+\bfE^s$ in $W$. To close the system (\ref{fwd-total:maxwell_govern}-\ref{fwd-total:bdry_shell}) we need a suitable radiation condition on the scattered field $\bfE^s(\bfx)$ as $x_3 \to\pm\infty$ that imposes the physical restriction  that it  is \emph{an outgoing wave} in the waveguide. We will formalize this condition by using waveguide modes
in the next section.

%
\subsection{Modal Solutions to Maxwell's Equations in the Waveguide}\label{subsec:forward:modal}

The waveguide supports modes obtained from either of the following families (c.f. \cite{ChenToTai}, \cite[Appendix A]{FanPhD}):
\begin{itemize}
    \item The first family consists of the waveguide modes
        $$ \bfM_m (\bfx) = \cl \big(u_m(\hat{x})e^{ih_mx_3}\hz\big) =  \left(\!\begin{array}{c} \frac{\pd u_m}{\pd x_2} \\[1ex] -\frac{\pd u_m}{\pd x_1} \\[1ex] 0 \end{array} \!\right) e^{ih_mx_3} \quad\mbox{for } m=1,2,\ldots\, , $$
        where $h_m^2+\lambda_m^2=k^2$, and $\{(\lambda_m,u_m)\}_{m=0}^{\infty}$ are the eigenpairs of the Neumann problem for the negative surface Laplacian $-\dt_{\Sigma}$ on the cross-section of the waveguide. 
We sort the values $\{\lambda_m\}_{m=0}^{\infty}$ in ascending order, in particular $\lambda_m>0$ for $m=1,2,\ldots$ (notice that $\lambda_0=0$ is not considered because then $u_0$ is constant and $\bfM_0=\boldsymbol{0}$). We also rescale the eigenfunctions to have $\{u_m\}_{m=0}^{\infty}$ orthonormal in $L^2(\Sigma)$, in which case $\{ \frac{u_m}{\sqrt{\lambda_m+1}}\}_{m=0}^{\infty}$ defines an orthonormal basis of $H^1(\Sigma)$. 
    \item The second family is given by
        $$ \bfN_n (\bfx ) = \frac{1}{k}\,\cl\cl\big(v_n(\hat{x})e^{ig_nx_3}\hz\big) = \frac{1}{k}  \left(\!\begin{array}{c} ig_n  \frac{\pd v_n}{\pd x_1}\\[1ex] ig_n \frac{\pd v_n}{\pd x_2} \\[1ex] \mu_n^2 v_n  \end{array} \!\right) e^{ig_nx_3}  $$ 
for $n=1,2,\ldots\, ,$        where $k^2 = \mu_n^2 + g_n^2$, and $\{(\mu_n,v_n)\}_{n=1}^{\infty}$ is the set of eigenpairs of the Dirichlet problem for $-\dt_{\Sigma}$ on the cross-section of the waveguide. 
				Here again, we sort $\{\mu_n\}_{n=1}^{\infty}$ increasing and we rescale the eigenfunctions so that $\{v_n\}_{n=1}^{\infty}$ is orthonormal in $L^2(\Sigma)$, in which case $\{ \frac{v_n}{\sqrt{\mu_n}}\}_{n=1}^{\infty}$ is an orthonormal basis of $H^1_0(\Sigma)$. 
\end{itemize}

Notice that, since we have assumed that $\Sigma $ is convex, these eigenfunctions have the further regularity $u_m,v_n\in H^2(\Sigma)$; see \cite[Theorems 3.2.1.2-3.2.1.3]{GrisvardBook}.

Here and in the sequel we avoid the cut-off wavenumbers $k\in \{ \lambda_m\} _{m=1}^{\infty} \cup \{ \mu_n \} _{n=1}^{\infty}$.  With this assumption, $h_m=\sqrt{k^2-\lambda^2_m}\neq 0$ and $g_n=\sqrt{k^2-\mu^2_n}\neq 0$ for all $m,n=1,2,\ldots$, and we can define them by choosing the square root branch with non-negative real and imaginary parts.   

The behavior of the waveguide modes depend on {the coefficients $g_n$ and $h_m$}:
\begin{itemize}
    \item Modes for which $h_m$ (or $g_n$) are real are said to be \emph{traveling waves}. They satisfy a Sommerfeld type outgoing radiation condition along the axis of the waveguide; for example, for $x_3>0$,
        $$ \frac{\pd \bfM_m}{\pd x_3} - ih_m\bfM_m = \bfO \, . $$
    \item Modes for which $h_m$(or $g_m$) are purely imaginary are said to be \emph{evanescent}. They decay or grow exponentially along the axis of the waveguide; for example, for $x_3>0$, 
        $$ \bfM_m = \cl(u_me^{ih_m | x_3 |}) = \cl(u_me^{- | h_m | x_3}) \to\mathbf{0} \quad\mbox{ as }x_3 \to+\infty\, . $$
        \item {We assume that $k$ is not a cut-off frequency (also called a Rayleigh frequency) for the wave guide which implies
        that $h_m\not=0$ and $g_n\not =0$ for any $m,n$, so all modes are either evanescent or travelling.}
\end{itemize}
It is clear that, for a fixed wavenumber $k$, the number of traveling waves is bounded and the remaining modes are evanescent. In contrast to a sound hard acoustic waveguide, there may be no traveling modes if the wavenumber $k$ is too small.

The constant factor $1/k$ in the definition of $\bfN_n$ is convenient for the following relations:
\begin{equation}\label{wg:forward:modal:explicit-relation-M-N}
    \bfN_m = \frac1k\cl \bfM_m\qquad\mbox{and}\qquad \bfM_m = \frac1k\cl \bfN_m \qquad\forall m=1,2,\ldots \, . 
\end{equation}

For later use, let us consider a bounded section of the waveguide $W_{(s_1,s_2)}=\Sigma\times (s_1,s_2)$ and introduce the space
$$
{\bfX}_{(s_1,s_2)}\, = \, \{\bfv\in H(\mathbf{curl},W_{(s_1,s_2)}) ; \, \boldsymbol{\nu}\times\bfv=0\,\mbox{ on } \Gamma _{(s_1,s_2)} \} \, ,
$$
where $\Gamma _{(s_1,s_2)}=\partial\Sigma\times (s_1,s_2) $. Moreover, on any Lipschitz surface $S$ contained in $W_{(s_1,s_2)}$ with normal vector field $\boldsymbol{\nu}_S$, we consider $L^{2}_T(S) $ to be  the subspace of fields in $(L^2(S))^3$ tangential to $S$. Moreover,
  the standard dual space of $H^{1/2}(S) $ is denoted  $\widetilde{H}^{-1/2}(S) $. We also consider the following space of traces:
$$
\widetilde{H}^{-1/2}(\mathrm{div},S) \, = \, 
\{ \bff\in \widetilde{H}^{-1/2}(S)^3; \, \bff=\boldsymbol{\nu}_{S}\times\bfv|_{S} \,\mbox{ for some }\bfv\in {\bfX} _{(s_1,s_2)}\} \, ;
$$
and denote by $\widetilde{H}^{-1/2}(\mathbf{curl},S)$ its dual space.
%
In particular, when $S=\Sigma_s$ with $s\in [s_1,s_2]$, these spaces can be characterized in terms of modes:
$$
\begin{array}{c}
\bff\in\widetilde{H}^{-1/2}(\mathbf{curl},\Sigma_s) \quad \mbox{if, and only if,} \quad \displaystyle\sum _{m=1}^{\infty} |\alpha _m |^2 |\lambda_m | + \sum _{n=1}^{\infty} |\beta_n|^2 |\mu_n |^{3} <\infty \, ,
\\
\bff\in\widetilde{H}^{-1/2}(\mathrm{div},\Sigma_s)  \quad \mbox{if, and only if,} \quad \displaystyle\sum _{m=1}^{\infty} |\alpha _m |^2 | \lambda_m | ^{3} + \sum _{n=1}^{\infty} |\beta_n |^2 |\mu_n | <\infty \, ,
\end{array}
$$
for each $\bff=\displaystyle\sum _{m=1}^{\infty} \alpha _m \sgd u_m+ \sum _{n=1}^{\infty} \beta_n \sgd \times v_n\in \widetilde{H}^{-1/2}(\Sigma _s)^3$; 
indeed, the natural norms on $\widetilde{H}^{-1/2}(\mathbf{curl},\Sigma_s) $ and $\widetilde{H}^{-1/2}(\mathrm{div},\Sigma_s) $ are equivalent to
\begin{equation}
\begin{array}{c}
|| \bff ||_{\widetilde{H}^{-1/2}(\mathbf{curl},\Sigma_s)} =  ( \displaystyle\sum _{m=1}^{\infty} |\alpha _m |^2 |\lambda_m | + \sum _{n=1}^{\infty} |\beta_n|^2 |\mu_n |^{3}   )^{1/2} , 
\\
|| \bff ||_{\widetilde{H}^{-1/2}(\mathrm{div},\Sigma_s)} =  ( \displaystyle\sum _{m=1}^{\infty} |\alpha _m |^2 | \lambda_m | ^{3} + \sum _{n=1}^{\infty} |\beta_n |^2 |\mu_n |  ) ^{1/2} ,
\end{array}\label{norms}
\end{equation}
respectively, see  \cite[Paragraph 3.1.3.2]{FanPhD}. Moreover, for any $t\in\mathbb{R}$, the space $H^{t}_T(\Sigma_s)$ consists of tangential fields $\bff$ on $\Sigma_s$ such that 
$$
 \sum _{m=1}^{\infty} |\alpha _m |^2 |\lambda_m |^{2(t+1)} + \sum _{n=1}^{\infty} |\beta_n | ^2 |\mu_n | ^{2(t+1)} <+\infty \, ,
$$
and may be endowed with the norm 
$$
|| \bff ||^2_{H^{t}_T(\Sigma_s)} = \sum _{m=1}^{\infty} |\alpha _m |^2 |\lambda_m | ^{2 (t+1)} + \sum _{n=1}^{\infty} |\beta_n |^2 |\mu_n| ^{2 (t+1)}\, .
$$

\subsection{The Blocked Waveguide and the Dirichlet-to-Neuman Map}\label{wg:forward:half_pipe}

The radiation condition, which is yet to be defined, must {constrain} any scattered field  to be an \emph{outgoing wave} in the waveguide: when represented by the waveguide modes, each contributing mode must either propagate outwards or decay exponentially away from the scatterer. With this in mind, let us consider a solution $\bfU$ of Maxwell's system (\ref{fwd-total:maxwell_govern}-\ref{fwd-total:bdry_shell})  in an unbounded section of the waveguide 
of the form $W_I=\Sigma\times I$ where $I=(-\infty ,s)$ or $(s,\infty)$ with $s\in\mathbb{R}$. 

For $I=(s,\infty)$, we say that $\bfU$ satisfies the outgoing radiation condition (ORC) if $\bfU \in H_{loc}(\mathbf{curl} ,W_I) $ and, for $|x_3|$ big enough, it can be written in terms of the waveguide modes as
$$ \bfU = \sum_{m=1}^{\infty}A_m\bfM_m + \sum_{n=1}^{\infty}B_n\bfN_n \, .  $$


The following lemma is shown in \cite[Lemma 3.1.3]{FanPhD} and states the well-posedness of the blocked (or semi-infinite) waveguide problem  in the absence of any scatterer.
 \begin{lemma}\label{fwd:half_pipe:unbdd} Given $\bfQ\in\widetilde{H}^{-1/2}(\mathrm{div},\Sigma_s)$, there exists a unique solution $\bfU\in H_{loc}(\tcl,W_{(s,\infty)})$ to the following problem:
\begin{equation} \label{fwd:half_pipe:unbdd-problem}
\begin{array}{rl}
   \cl\cl\bfU - k^2\bfU = \bfO & \mbox{in } W_{(s,\infty)}\, , \\[1ex]
   \boldsymbol{\nu}\times\bfU = \bfO & \mbox{on }  \Gamma_{(s,\infty)}=\partial\Sigma\times (s,+\infty)\, , \\[1ex]
   \nc\times\bfU = \bfQ & \mbox{on }  \Sigma_s\, , \\[1ex]
   \mbox{$\bfU$ satisfies the ORC} & \mbox{for }  x_3 \to +\infty \, . 
\end{array}
\end{equation}
Moreover, the solution has the expansion
\begin{equation} \bfU = \sum_{m=1}^{\infty} A_m \bfM_m + \sum_{n=1}^{\infty} B_n \bfN_n \, , \label{modal}
\end{equation}
when $\bfQ = \displaystyle\sum_{m=1}^{\infty} A_m \sgd u_m - \frac{i}{k} \sum_{n=1}^{\infty} B_n g_n \sgd \times v_n$ on $\Sigma_s$. 

The same result holds for $W_{(-\infty,s)}$ if $h_m$ and $g_n$ are replaced by $-h_m$ and $-g_n$ in the expansions.
\end{lemma}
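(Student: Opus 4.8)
The plan is to construct the solution explicitly as a superposition of waveguide modes referenced to the plane $\Sigma_s$, and then to obtain uniqueness from an energy identity combined with the modal structure. The modal construction will simultaneously yield the expansion (\ref{modal}).

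\emph{Existence.} Since $\bfQ\in\widetilde{H}^{-1/2}(\mathrm{div},\Sigma_s)$, I would first use the modal characterization of that space recalled above to write $\bfQ=\sum_m A_m\,\sgd u_m-\frac{i}{k}\sum_n B_n g_n\,\sgd\times v_n$, with the coefficients square–summable against the displayed weights. I then define $\bfU$ by the series (\ref{modal}), taking each $\bfM_m$, $\bfN_n$ with exponential factor $e^{ih_m(x_3-s)}$, resp. $e^{ig_n(x_3-s)}$, so that it is outgoing (or exponentially decaying) away from $\Sigma_s$. From the explicit form of the modes one checks directly that $\nc\times\bfM_m|_{\Sigma_s}=\sgd u_m$ and $\nc\times\bfN_n|_{\Sigma_s}=-\frac{i}{k}g_n\,\sgd\times v_n$, so the tangential trace on $\Sigma_s$ matches $\bfQ$ term by term; moreover each summand solves $\cl\cl(\cdot)-k^2(\cdot)=\bfO$ in $W_{(s,\infty)}$, satisfies $\boldsymbol{\nu}\times(\cdot)=\bfO$ on $\Gamma_{(s,\infty)}$ (because $u_m$ obeys a Neumann and $v_n$ a Dirichlet condition on $\partial\Sigma$), and is outgoing by construction, so the problem (\ref{fwd:half_pipe:unbdd-problem}) is solved termwise.

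\emph{Convergence — the main obstacle.} The crux is to show the series converges in $H(\tcl,W_{(s,T)})$ for every finite $T>s$; this is exactly where the hypothesis $\bfQ\in\widetilde{H}^{-1/2}(\mathrm{div},\Sigma_s)$ is essential (mere $L^2_T(\Sigma_s)$ regularity would not suffice, since evanescent decay gives no help right at $\Sigma_s$). I would argue in two steps. First, $\{\bfM_m\}\cup\{\bfN_n\}$ is an orthogonal family in $L^2(W_{(s,T)})$ — using orthogonality of $\{u_m\}$ and of $\{v_n\}$ in $L^2(\Sigma)$, orthogonality of the tangential fields $\{\sgd u_m\}$ against $\{\sgd\times v_n\}$, and the vanishing of the longitudinal component of the $\bfM$–modes — and, since $\cl$ maps modes to modes (relation (\ref{wg:forward:modal:explicit-relation-M-N})), the family is orthogonal in $H(\tcl,W_{(s,T)})$ as well. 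Second, over the slab one has $|e^{ih_m(x_3-s)}|\le 1$, so the $H(\tcl,W_{(s,T)})$–norm of the $m$-th mode is bounded by $(T-s)^{1/2}$ times a fixed power of $\lambda_m$ (resp. $\mu_n$), and that power is precisely the one made summable against $\{|A_m|^2\}$ (resp. $\{|B_n|^2\}$) by the $\widetilde{H}^{-1/2}(\mathrm{div})$ condition. Orthogonality then gives $\|\bfU\|_{H(\tcl,W_{(s,T)})}^2=\sum_m|A_m|^2\|\bfM_m\|^2+\sum_n|B_n|^2\|\bfN_n\|^2<\infty$, hence $\bfU\in H_{loc}(\tcl,W_{(s,\infty)})$ with a well-defined tangential trace on $\Sigma_s$ equal to $\bfQ$; the same estimate yields continuity of the solution operator, which is what one needs when the Dirichlet-to-Neumann map is introduced.

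\emph{Uniqueness.} Suppose $\bfU$ solves (\ref{fwd:half_pipe:unbdd-problem}) with $\bfQ=\bfO$. Applying the Green formula for $\cl$ on $W_{(s,T)}$ to $\cl\cl\bfU-k^2\bfU=\bfO$, the boundary contributions on $\Gamma_{(s,T)}$ and on $\Sigma_s$ drop out (by the PEC condition and $\bfQ=\bfO$), leaving $\int_{W_{(s,T)}}(|\cl\bfU|^2-k^2|\bfU|^2)=\pm\langle\nc\times\bfU,\overline{\cl\bfU}\rangle_{\Sigma_T}$; taking imaginary parts kills the real left-hand side, so $\mathrm{Im}\,\langle\nc\times\bfU,\overline{\cl\bfU}\rangle_{\Sigma_T}=0$ for all $T>s$. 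Inserting the ORC expansion (valid for $T$ large) and using $\cl\bfM_m=k\bfN_m$, $\cl\bfN_m=k\bfM_m$ together with the orthogonality relations, this imaginary part reduces to a positive multiple of the sum of the squared moduli of the \emph{traveling}-mode coefficients, plus a term that tends to $0$ as $T\to\infty$; hence all traveling-mode coefficients vanish. For the evanescent part, I would note that $\cl\cl\bfU-k^2\bfU=\bfO$ forces $\dv\bfU=0$ and $-\dt\bfU=k^2\bfU$ componentwise, while the PEC condition becomes a Dirichlet condition on the longitudinal component and a tangential condition on the transverse part along $\partial\Sigma$; separating variables then shows $\bfU$ is a purely outgoing modal sum on all of $W_{(s,\infty)}$. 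The constraint $\nc\times\bfU|_{\Sigma_s}=\bfO$ then reads $\sum_m A_m\,\sgd u_m-\frac{i}{k}\sum_n B_n g_n\,\sgd\times v_n=\bfO$, so by the orthogonality of $\{\sgd u_m\}\cup\{\sgd\times v_n\}$ and $g_n\ne 0$ (cut-off frequencies are excluded) all $A_m=B_n=0$, i.e. $\bfU\equiv\bfO$. Finally, the case $I=(-\infty,s)$ is handled identically after reversing the signs of $h_m$ and $g_n$. The main risk in this plan lies in the convergence step: getting the per-mode $H(\tcl)$ estimates over a slab touching $\Sigma_s$ to match the $\widetilde{H}^{-1/2}(\mathrm{div})$ weights with the right powers of $\lambda_m$ and $\mu_n$, and carefully justifying the trace on $\Sigma_s$ for the sum of the series.
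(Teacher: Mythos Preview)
The paper does not actually prove this lemma; it only cites \cite[Lemma 3.1.3]{FanPhD}. Your modal-series construction is the standard route and is almost certainly the one used there, so in outline your plan is the right one. There is, however, a concrete miscalculation in the convergence step that you yourself flag as the ``main risk''.

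You write that from $|e^{ih_m(x_3-s)}|\le 1$ the $H(\tcl,W_{(s,T)})$--norm of $\bfM_m$ is bounded by $(T-s)^{1/2}$ times ``a fixed power of $\lambda_m$'' which is ``precisely the one made summable'' by $\bfQ\in\widetilde H^{-1/2}(\mathrm{div},\Sigma_s)$. This is off by one power. A direct computation gives $\|\bfM_m\|_{L^2(\Sigma)}^2=\lambda_m^2$ and $\|\cl\bfM_m\|_{L^2(\Sigma)}^2=|h_m|^2\lambda_m^2+\lambda_m^4\sim\lambda_m^4$ for large $m$; with your crude bound on the $x_3$--integral this yields $\|\bfM_m\|_{H(\tcl,W_{(s,T)})}^2\lesssim(T-s)\lambda_m^4$, whereas the norm (\ref{norms}) only gives $\sum_m|A_m|^2\lambda_m^3<\infty$. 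The same mismatch (one power of $\mu_n$) occurs for $\bfN_n$. The fix is exactly the ingredient you dismissed: for evanescent modes one must use $\int_s^T e^{-2|h_m|(x_3-s)}\,dx_3\le(2|h_m|)^{-1}\sim\lambda_m^{-1}$, which brings the per-mode $H(\tcl)$ bound down to $\sim\lambda_m^3$ and makes the series converge against the $\widetilde H^{-1/2}(\mathrm{div})$ weights. So evanescent decay \emph{does} help, not ``right at $\Sigma_s$'' pointwise, but in the integrated $L^2$ sense over the slab; finitely many propagating modes are handled separately with the crude bound.

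Your uniqueness argument is correct but more elaborate than needed. Once the ORC gives a modal expansion of $\bfU$ for large $x_3$, the trace $\nc\times\bfU|_{\Sigma_T}\in\widetilde H^{-1/2}(\mathrm{div},\Sigma_T)$ controls the coefficients, and the modal series (with these coefficients) converges in $H_{loc}(\tcl,W_{(s,\infty)})$ by the existence argument; unique continuation then identifies $\bfU$ with this series on all of $W_{(s,\infty)}$. Evaluating $\nc\times\bfU|_{\Sigma_s}=\bfO$ and using $g_n\ne 0$ kills \emph{all} coefficients at once. The energy-flux step to eliminate travelling modes first is therefore redundant.
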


%
We can use this result to define an important operator for our upcoming analysis,  denoted by $T^{\pm}_{s}$ and which is the analogue of the Dirichlet-to-Neumann (DtN) map for Helmholtz equation. Specifically, for some fixed $s\in\mathbb{R}$ and  any tangential field $\bfQ\in\widetilde{H}^{-1/2}(\mathrm{div},\Sigma_s)$, we take
\begin{eqnarray}
T^+_{s}(\bfQ) = \nc\times(\cl\bfU)|_{\Sigma_s} \, , \label{fwd-dtn-general-form}
\end{eqnarray}
where $\bfU$ solves (\ref{fwd:half_pipe:unbdd-problem}) in $W_{(s,\infty)}$. 
A similar operator can be defined by considering the analogue on $W_{(-\infty,s)}$, and we  identify the operator on each specific cross-section by means of a superscript: $T^{+}_{s}$ and $T^{-}_{s}$ on $\Sigma_s$ when using $W_{(s,\infty )}$ and $W_{(-\infty,s)}$, respectively. The analysis of the two operators  $T^{\pm}_{s}$ is analogous and, accordingly, we only give details for $T^+_{s}$. 

To derive a series representation of $T^+_{s}$ in terms of the waveguide modes, we can make use of the explicit form of the solution $\bfU$ provided in Lemma \ref{fwd:half_pipe:unbdd}:
\begin{equation} \label{fwd-dtn-explicit-form}
\begin{array}{rl}
    T^{+}_{s}(\nc\times \bfU|_{\Sigma}) \, =&\! -\, i\,\displaystyle\sum_{m=1}^{\infty} \Big\la \nc\times \bfU|_{\Sigma_s}, \left(\!\!\begin{array}{c} \sgd u_m \\ 0 \end{array}\!\!\right) \!\Big\ra_{\!\Sigma_s}\, \frac{h_m}{\lambda_m^2} \left(\! \!\begin{array}{c} \vgd u_m \\ 0\end{array}\!\!\right) \nonumber \\
    & \, +\, i\,k^2\, \displaystyle\sum_{n=1}^{\infty} \Big\la \nc\times \bfU|_{\Sigma_s}, \left(\!\!\begin{array}{c} \vgd v_n \\ 0 \end{array}\!\!\right) \!\Big\ra_{\!\Sigma_s} \, \frac{1}{g_n\mu_n^2}\! \left(\!\!\begin{array}{c} \sgd v_n \\ 0 \end{array}\!\!\right)\! .
\end{array}
\end{equation}
This expression is explicitly derived in \cite[Section 3.1.3.5]{FanPhD} and then used to deduce the following properties of the operator $T^{+}_{s}$ {using the  characterization of the norms in(\ref{norms})} (see \cite[Lemmas 3.1.4 and 3.1.5]{FanPhD}).
 \begin{lemma}\label{fwd-dtn-bdd&analytic} 
The operator $T^{+}_{s}$ is bounded  from $\widetilde{H}^{-1/2}(\mathrm{div},\Sigma_s)$ to $\widetilde{H}^{-1/2}(\mathrm{div},\Sigma_s)$. Moreover, there exists a neighborhood $\mathfrak{B}\subset \mathbb{C}$ of $k$ where $T^{+}_{s}$ depends analytically on $k$.
\end{lemma}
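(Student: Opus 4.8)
The plan is to argue directly from the explicit modal representation (\ref{fwd-dtn-explicit-form}) of $T^{+}_{s}$. Expanding an input as $\bfQ=\sum_{m}\alpha_m\,\sgd u_m+\sum_{n}\beta_n\,\sgd\times v_n$ and using that $\{\sgd u_m\}$ and $\{\sgd\times v_n\}$ are mutually $L^2_T(\Sigma_s)$-orthogonal with $\|\sgd u_m\|_{L^2}^2=\lambda_m^2$, $\|\sgd\times v_n\|_{L^2}^2=\mu_n^2$, the pairings appearing in (\ref{fwd-dtn-explicit-form}) (which extend the $L^2_T$ inner product) reduce to $\lambda_m^2\alpha_m$, resp.\ $\mu_n^2\beta_n$. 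Carrying out the two cancellations with the factors $h_m/\lambda_m^2$ and $1/(g_n\mu_n^2)$ already present in (\ref{fwd-dtn-explicit-form}), one finds $T^{+}_{s}\bfQ=-i\sum_m h_m\alpha_m\,\vgd u_m+i\sum_n (k^2/g_n)\,\beta_n\,\sgd v_n$. In other words $T^{+}_{s}$ is block-diagonal over the ``scalar sectors'' (Neumann $u_m$ and Dirichlet $v_n$); within each sector it interchanges the surface-gradient and surface-curl parts; and the corresponding scalar symbols are $-ih_m$ and $ik^2/g_n$.

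For boundedness I would combine this with the modal characterization of the trace norms in (\ref{norms}) (detailed in \cite[Paragraph 3.1.3.2]{FanPhD}): in $\widetilde{H}^{-1/2}(\mathrm{div},\Sigma_s)$ a surface-gradient component is weighted at Sobolev order $+1/2$ (powers $\lambda_m^{3}$, resp.\ $\mu_n^{3}$) and a surface-curl component at order $-1/2$ (powers $\lambda_m$, resp.\ $\mu_n$), in each of the Hodge bases involved. With this the estimate $\|T^{+}_{s}\bfQ\|\le C\|\bfQ\|$ --- indeed a two-sided estimate --- reduces to the elementary asymptotics $|h_m|=\sqrt{\lambda_m^2-k^2}\sim\lambda_m$ and $|g_n|=\sqrt{\mu_n^2-k^2}\sim\mu_n$ as $m,n\to\infty$: on a Neumann mode the two-order drop from the gradient$\to$curl interchange (weight $\lambda_m^{3}\to\lambda_m$) is compensated by $|h_m|^2\sim\lambda_m^2$, and on a Dirichlet mode the two-order gain ($\mu_n\to\mu_n^{3}$) is compensated by $|k^2/g_n|^2\sim k^4/\mu_n^2$, up to the fixed constant $k^4$. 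Only finitely many modes are travelling or close to a cut-off, and since $k$ is not a cut-off frequency each of those $h_m,g_n$ is nonzero, so they contribute a bounded finite-rank block. Thus $T^{+}_{s}$ is bounded --- in fact an isomorphism onto its range; this is exactly \cite[Lemmas 3.1.4--3.1.5]{FanPhD}, whose computation I would follow.

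For the analytic dependence on $k$, fix $k$ outside the countable cut-off set $\{\lambda_m\}_{m\geq1}\cup\{\mu_n\}_{n\geq1}$. Since $k$ differs from every cut-off and the cut-offs accumulate only at infinity, $\mathrm{dist}(k,\{\pm\lambda_m,\pm\mu_n\})>0$, so there is a complex ball $\mathfrak{B}\subset\mathbb{C}$ centred at $k$ whose closure avoids all the points $\pm\lambda_m,\pm\mu_n$. On $\mathfrak{B}$ each $\kappa\mapsto h_m(\kappa)$ and $\kappa\mapsto g_n(\kappa)$ --- the square-root branch coinciding on the real axis with the physically correct outgoing one, continued analytically --- is holomorphic and nowhere zero, hence every symbol $-ih_m(\kappa)$, $i\kappa^2/g_n(\kappa)$ is holomorphic on $\mathfrak{B}$ and every partial sum of the modal series defines an operator-valued holomorphic function. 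Shrinking $\mathfrak{B}$ if necessary, the ratios $|h_m(\kappa)|/\lambda_m$ and $|g_n(\kappa)|/\mu_n$ stay between two positive constants uniformly for $\kappa\in\mathfrak{B}$, so by the estimates above the tails of the series are uniformly small in operator norm on $\mathfrak{B}$. Hence $T^{+}_{s}$ is a locally uniform limit of holomorphic operator-valued functions, and is therefore holomorphic on $\mathfrak{B}$.

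The step I expect to be the real obstacle is the trace-space bookkeeping underlying the boundedness argument: one must verify that, after the rotation implicit in $\nc\times$, the output components are weighed with the correct anisotropic $\widetilde{H}^{-1/2}(\mathrm{div})$-exponents --- order $-1/2$ (power $\lambda_m$) for the interchanged Neumann part and order $+1/2$ (power $\mu_n^{3}$) for the interchanged Dirichlet part --- rather than with the exponents of the input basis; confusing the two would swap the ``hard'' and ``easy'' Sobolev indices and wreck the estimate. A secondary technical point, used only for analyticity, is the uniform-in-$\kappa$ convergence of the modal series near $k$; this is exactly where the hypothesis that $k$ avoids the cut-offs enters, allowing $\mathfrak{B}$ to be chosen small enough that the square-root branches remain holomorphic and the comparability constants remain uniform.
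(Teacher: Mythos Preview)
Your proposal is correct and follows exactly the approach the paper indicates: the paper does not give its own proof but defers to \cite[Lemmas 3.1.4--3.1.5]{FanPhD}, explicitly stating that the result is deduced from the modal representation (\ref{fwd-dtn-explicit-form}) together with the norm characterization (\ref{norms}), which is precisely your strategy. Your write-up is in fact more detailed than what the paper records, and your flagged ``obstacle'' --- tracking the anisotropic $\widetilde{H}^{-1/2}(\mathrm{div})$ weights after the $\sgd\leftrightarrow\vgd$ interchange --- is the right place to be careful; to close it cleanly you should state (or cite from the thesis) the companion norm formula in the $\{\vgd u_m,\sgd v_n\}$ basis, since (\ref{norms}) as written is only given for the $\{\sgd u_m,\vgd v_n\}$ expansion.
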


Let us notice that, by means of $T^{+}_{s}$, the blocked waveguide problem (\ref{fwd:half_pipe:unbdd-problem}) can be rewritten  in a bounded section of waveguide. Accordingly, the following result is the counterpart of Lemma~\ref{fwd:half_pipe:unbdd}, and we refer to \cite[Corollary 3.1.1]{FanPhD} for more details.
\begin{corollary}\label{fwd:half_pipe:bdd} For $s_1<s_2$ and $\bfQ\in\widetilde{H}^{-1/2}(\mathrm{div},\Sigma_{s_1})$, there exists a unique $\bfU\in H(\tcl,W_{(s_1,s_2)})$ such that 
\begin{equation}
\begin{array}{rcl}
   \cl\cl\bfU - k^2\bfU = \bfO & \mbox{in} & W_{(s_1,s_2)} \, , \\[1ex]
   \boldsymbol{\nu}\times\bfU = \bfO & \mbox{on} & \Gamma_{(s_1,s_2)}\, , \\[1ex]
   \nc\times\bfU = \bfQ & \mbox{on} & \Sigma_{s_1}\, , \\[1ex]
   \nc\times(\cl\bfU) = T^{+}_{s_2}(\nc\times\bfU) & \mbox{on} & \Sigma_{s_2} \, . \label{fwd:half_pipe:bdd-problem}
\end{array}
\end{equation}
\end{corollary}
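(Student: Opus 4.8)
The plan is to reduce the bounded‑section problem to the semi‑infinite problem of Lemma~\ref{fwd:half_pipe:unbdd}, using the ``DtN'' operator $T^{+}_{s_2}$, whose definition (\ref{fwd-dtn-general-form}) was tailored for precisely this purpose; existence and uniqueness are then treated separately. Throughout I use that, by Lemma~\ref{fwd-dtn-bdd&analytic}, $T^{+}_{s_2}$ is a bounded operator on $\widetilde{H}^{-1/2}(\mathrm{div},\Sigma_{s_2})$, and that for a field $\bfU\in H(\tcl,W_{(s_1,s_2)})$ solving $\cl\cl\bfU-k^2\bfU=\bfO$ one also has $\cl\bfU\in H(\tcl,W_{(s_1,s_2)})$, so both $\nc\times\bfU|_{\Sigma_{s_2}}$ and $\nc\times(\cl\bfU)|_{\Sigma_{s_2}}$ make sense in $\widetilde{H}^{-1/2}(\mathrm{div},\Sigma_{s_2})$.

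\emph{Existence.} Let $\bfU_\infty\in H_{loc}(\tcl,W_{(s_1,\infty)})$ be the solution of (\ref{fwd:half_pipe:unbdd-problem}) on $W_{(s_1,\infty)}$ with Cauchy datum $\bfQ$ on $\Sigma_{s_1}$ provided by Lemma~\ref{fwd:half_pipe:unbdd}, and set $\bfU:=\bfU_\infty|_{W_{(s_1,s_2)}}$. Since $W_{(s_1,s_2)}$ is bounded, $\bfU\in H(\tcl,W_{(s_1,s_2)})$, and the first three lines of (\ref{fwd:half_pipe:bdd-problem}) hold by construction. For the DtN condition on $\Sigma_{s_2}$: by the expansion (\ref{modal}), which is valid throughout the homogeneous half‑pipe $W_{(s_1,\infty)}$, the modal coefficients of $\bfU_\infty$ on $\Sigma_{s_2}$ are those on $\Sigma_{s_1}$ multiplied by the factors $e^{ih_m(s_2-s_1)}$, $e^{ig_n(s_2-s_1)}$, which only improves summability, so $\nc\times\bfU_\infty|_{\Sigma_{s_2}}\in\widetilde{H}^{-1/2}(\mathrm{div},\Sigma_{s_2})$ and $T^{+}_{s_2}$ applies to it. Applying Lemma~\ref{fwd:half_pipe:unbdd} on $W_{(s_2,\infty)}$ with datum $\nc\times\bfU_\infty|_{\Sigma_{s_2}}$, uniqueness there forces its solution to be $\bfU_\infty|_{W_{(s_2,\infty)}}$; hence, by the very definition (\ref{fwd-dtn-general-form}), $\nc\times(\cl\bfU)|_{\Sigma_{s_2}}=T^{+}_{s_2}(\nc\times\bfU|_{\Sigma_{s_2}})$.

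\emph{Uniqueness.} Suppose $\bfU$ solves (\ref{fwd:half_pipe:bdd-problem}) with $\bfQ=\bfO$. Let $\bfV\in H_{loc}(\tcl,W_{(s_2,\infty)})$ be the solution of (\ref{fwd:half_pipe:unbdd-problem}) on $W_{(s_2,\infty)}$ with Cauchy datum $\nc\times\bfU|_{\Sigma_{s_2}}$ (which lies in $\widetilde{H}^{-1/2}(\mathrm{div},\Sigma_{s_2})$ since $\bfU\in{\bfX}_{(s_1,s_2)}$), and let $\widetilde\bfU$ equal $\bfU$ on $W_{(s_1,s_2)}$ and $\bfV$ on $W_{(s_2,\infty)}$. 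The tangential traces $\nc\times\,\cdot\,$ of the two pieces agree on $\Sigma_{s_2}$ by construction, and the tangential traces of their curls agree on $\Sigma_{s_2}$ precisely by the fourth line of (\ref{fwd:half_pipe:bdd-problem}) together with (\ref{fwd-dtn-general-form}). The first matching is exactly the condition for $\widetilde\bfU\in H_{loc}(\tcl,W_{(s_1,\infty)})$, and the second is exactly the natural transmission condition making $\cl\cl\widetilde\bfU-k^2\widetilde\bfU=\bfO$ hold weakly across $\Sigma_{s_2}$. Hence $\widetilde\bfU$ solves (\ref{fwd:half_pipe:unbdd-problem}) on $W_{(s_1,\infty)}$ with zero datum on $\Sigma_{s_1}$ and the ORC (inherited from $\bfV$), so Lemma~\ref{fwd:half_pipe:unbdd} gives $\widetilde\bfU=\bfO$, whence $\bfU=\bfO$.

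The mode‑by‑mode bookkeeping for the trace on $\Sigma_{s_2}$ and the fact that the semi‑infinite solution is purely modal on the whole homogeneous half‑pipe are routine. The step needing the most care is the gluing in the uniqueness argument: one must verify that matching $\nc\times\bfU$ and $\nc\times(\cl\bfU)$ across $\Sigma_{s_2}$ really are the correct pair of transmission conditions for the second‑order operator $\cl\cl-k^2$, so that $\widetilde\bfU$ is a genuine $H_{loc}(\tcl)$ weak solution on $W_{(s_1,\infty)}$ with no spurious interface terms. This can be checked by integrating by parts against test fields in $H(\tcl)$ and using density and continuity of the relevant trace maps. Alternatively, the whole corollary can be recast as a Fredholm argument for the sesquilinear form on $W_{(s_1,s_2)}$ with boundary term $\la T^{+}_{s_2}(\nc\times\bfU),\nc\times\overline{\bfv}\ra_{\Sigma_{s_2}}$, using that $T^{+}_{s_2}$ is a compact perturbation of a coercive operator together with the uniqueness above; but the reduction to Lemma~\ref{fwd:half_pipe:unbdd} is shorter.
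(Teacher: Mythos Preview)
Your argument is correct and matches the paper's approach: the paper does not spell out a proof but states that the bounded problem is simply the blocked waveguide problem (\ref{fwd:half_pipe:unbdd-problem}) rewritten via the DtN operator $T^{+}_{s_2}$ and refers to \cite[Corollary 3.1.1]{FanPhD}. Your restriction-for-existence and gluing-for-uniqueness construction is exactly this equivalence made explicit, and the transmission check across $\Sigma_{s_2}$ that you flag is indeed the only point requiring care.
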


%
%
\subsection{Analysis of the Forward Problem}\label{wg:forward:weak_vari:bdd}

Now we have all the tools we need both to impose a suitable radiation condition on the scattered field and to analyze the forward problem (\ref{fwd-total:bdry_shell})-(\ref{fwd-total:maxwell_govern}) closed with such a radiation condition. 
Let the scatterer $D$ be illuminated by a point source at $\bfy\in W$ located sufficiently far below $D$.  {By this we mean that we can choose} $R>0$ such that $\overline{D}\subset\Sigma\times W_{(-R,R)}$ and $y_3<R$. We then write the forward problem
as the equivalent problem of finding the total field $\bfE\in H({\rm\bf{}curl},W_{(-R,R)})$ such that 
\begin{equation}\label{fwd-total:problem_truncated}
\begin{array}{rcl}
   \cl\cl\bfE - k^2\varepsilon\bfE = \bf0 & \mbox{in} &W_{(-R,R)}\, , \\[1ex]
   \boldsymbol{\nu}\times\bfE = \bfO & \mbox{on} & \Gamma_{(-R,R)}\, , \\[1ex]
 \pm\nc\times(\cl (\bfE-\bfE^i)) = T^{\pm}_{R}(\nc\times(\bfE-\bfE^i)) & \mbox{on} & \Sigma_{\pm R} \, .
\end{array}
\end{equation}
This problem can be equivalently rewritten in terms of the scattered field as the problem of finding 
$\bfE^s\in H({\rm\bf{}curl},W_{(-R,R)})$ such that
\begin{equation}\label{fwd-sc:problem_truncated}
\begin{array}{rcl}
   \cl\cl\bfE^s - k^2\varepsilon\bfE^s = k^2(\varepsilon - 1) \bfE^i& \mbox{in} & W_{(-R,R)}\, , \\[1ex]
   \boldsymbol{\nu}\times\bfE^s = \bfO & \mbox{on} & \Gamma_{(-R,R)}\, , \\[1ex]
   \pm\nc\times(\cl\bfE^s) = T^{\pm }_{R}(\nc\times\bfE^s) & \mbox{on} & \Sigma_{\pm R} \, . 
\end{array}
\end{equation}

In order to write the scattering problem  in weak form, we define
the trace operator $\boldsymbol{\gamma} _T:H(\mathbf{curl},W_{(-R,R)})\to H^{-1/2}( \mathbf{curl} ,\Sigma_{ \pm R})$ for smooth vector fields on $W_{(-R,R)}$ by
 $\boldsymbol{\gamma}_T\bfv =\nc\times (\bfv |_{\Sigma_{ \pm R}} \times\nc)$.  We also denote by $(\cdot, \cdot )_{W_{(-R,R)}}$ the inner product in $L^2(W_{(-R,R)})^3$, and by $\langle \cdot , \cdot \rangle_{\Sigma_{ \pm R}}$ the duality product in $\widetilde{H}^{-1/2}(\mathrm{div},\Sigma_{\pm R})\times \widetilde{H}^{-1/2}(\mathbf{curl},\Sigma_{\pm R})$ so that: 
$$
\begin{array}{l}
\displaystyle (\bfu,\bfv)_{W_{(-R,R)}} = \int_{W_{(-R,R)} }\bfu\cdot\overline{\bfv}\, d\bfx \quad\forall \bfu,\bfv\in L^2(W_{(-R,R)})^3\, ,
\\
\displaystyle \langle\bfu,\bfv\rangle_{\Sigma _{\pm R}} = \int_{\Sigma _{ \pm R}} \bfu\cdot\overline{\bfv}\, dS \quad\forall \bfu,\bfv\in L^2(\Sigma _{ \pm R})^3\, .
\end{array}
$$
Formally, multiplying the first equation of (\ref{fwd-total:problem_truncated}) by the complex conjugate of a smooth test function $\bfv\in C^\infty(W_{(-R,R)})^3$ and applying Green's identity, we have that $\bfE\in H({\rm\bf{}curl},W_{(-R,R)})$
satisfies
\begin{equation}\label{fwd-total:weak_form}
\begin{array}{l}
\displaystyle ( \cl\bfE,\cl {\bfv} )_{W_{(-R,R)}}\! - k^2 (\varepsilon\bfE, {\bfv} )_{W_{(-R,R)}}  \!
+ \langle T^{\pm }_{R}(\nc\times\bfE) ,\boldsymbol{\gamma}_T {\bfv} \rangle_{\Sigma_{\pm R}} =
\mathcal{F}({\bfv}) \, .
\end{array}
\end{equation}
Here the antilinear functional
$$
\mathcal{F}(\bfv) = 
 \langle ( T^{\pm }_{ R}(\nc\times\bfE^i) \mp\nc\times (\cl\bfE^i ) ) , \boldsymbol{\gamma}_T {\bfv}  \rangle _{\Sigma_{ \pm R}}
\, .
$$
A weak formulation of the scattered field problem (\ref{fwd-sc:problem_truncated}) is formulated in the same way.

Given an incident field $\bfE^i $, the total field $\bfE\in\bfX_{(-R,R)}$ is a weak solution to the forward scattering problem if it satisfies (\ref{fwd-total:weak_form}) for all $\bfv\in\bfX_{(-R,R)}$. Problem (\ref{fwd-total:weak_form}) can then be analyzed using the analytic Fredholm theory to prove the following result.
\begin{theorem} \label{fwd:vari_well-posedness}
If $\tilde{\sigma}\geq\sigma_0>0$ in $D$ (or in some open bounded subdomain of $D$ with non-zero measure), then the forward scattering problem (\ref{fwd-total:weak_form}) is well-posed for any real wavenumber $k$. If $\tilde{\sigma}=0$ in $D$, then the problem is well-posed except for, at most, a discrete set of real $k$ values whose only possible accumulation point is $\infty$.
\end{theorem}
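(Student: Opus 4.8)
## Proof proposal for Theorem \ref{fwd:vari_well-posedness}

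The plan is to cast the weak formulation (\ref{fwd-total:weak_form}) as an operator equation $(\mathcal{A}+\mathcal{K})\bfE=\mathcal{F}$ on the Hilbert space $\bfX_{(-R,R)}$, where $\mathcal{A}$ is an isomorphism and $\mathcal{K}$ is compact, and then invoke the analytic Fredholm theory. First I would isolate the sesquilinear form $a(\bfE,\bfv)=(\cl\bfE,\cl\bfv)_{W_{(-R,R)}}-k^2(\varepsilon\bfE,\bfv)_{W_{(-R,R)}}+\langle T^{\pm}_R(\nc\times\bfE),\boldsymbol{\gamma}_T\bfv\rangle_{\Sigma_{\pm R}}$ and split off the problematic terms. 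The standard device is a Helmholtz-type decomposition $\bfX_{(-R,R)}=\bfX_0\oplus\nabla S$, where $\bfX_0$ consists of (weakly) divergence-free fields in $\bfX_{(-R,R)}$ and $S\subset H^1(W_{(-R,R)})$ carries the gradient part; on the divergence-free subspace one has a compact embedding into $L^2$ (this is where convexity of $\Sigma$ and the Lipschitz regularity of the truncated guide are used, via the standard Weber/compactness results for $H(\tcl)\cap H(\dv)$ with the electric boundary condition). Writing $\bfE=\bfE_0+\nabla\varphi$ and $\bfv=\bfv_0+\nabla\psi$, the form $a$ becomes coercive in the "$\bfX_0$ part plus $\|\nabla\varphi\|^2$" up to compact perturbations: the term $(\cl\bfE,\cl\bfv)+\|\nabla\varphi\|^2$ is coercive on $\bfX_{(-R,R)}$, while $-k^2(\varepsilon\bfE,\bfv)-\|\nabla\varphi\|^2$ is compact because it only sees the $L^2$ inner product on the divergence-free component and a bounded-plus-compact rearrangement on the gradient component.

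The remaining ingredient is the boundary term $\langle T^{\pm}_R(\nc\times\bfE),\boldsymbol{\gamma}_T\bfv\rangle_{\Sigma_{\pm R}}$. Using the explicit modal series (\ref{fwd-dtn-explicit-form}) for $T^{+}_R$ (and its mirror for $T^{-}_R$), I would decompose it into a sum of finitely many terms corresponding to the traveling modes and an infinite tail corresponding to the evanescent modes. For the evanescent tail the coefficients $h_m$ and $g_n$ are purely imaginary with $\mathrm{Im}\,h_m,\mathrm{Im}\,g_n\to+\infty$, which makes the associated quadratic form sign-definite (up to sign) and hence contributes to coercivity after a further sign bookkeeping; the finite traveling part is a finite-rank, hence compact, operator on the trace spaces, and the trace map $\bfX_{(-R,R)}\to\widetilde{H}^{-1/2}(\tcl,\Sigma_{\pm R})$ is bounded by Lemma \ref{fwd-dtn-bdd&analytic}. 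Collecting all this, $a(\bfE,\bfv)=\tilde a(\bfE,\bfv)+b(\bfE,\bfv)$ with $\tilde a$ coercive (Lax–Milgram gives the isomorphism $\mathcal{A}$) and $b$ induced by a compact operator $\mathcal{K}$. Thus $\mathcal{A}+\mathcal{K}$ is Fredholm of index zero for every real $k$ avoiding the cut-off set, and injectivity implies well-posedness.

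It then remains to settle injectivity, i.e. uniqueness of the homogeneous problem, and this is where the two cases split. If $\tilde\sigma\ge\sigma_0>0$ on a subdomain of nonzero measure, I would take $\bfv=\bfE$ in the homogeneous version of (\ref{fwd-total:weak_form}) and examine the imaginary part: the term $k^2(\varepsilon\bfE,\bfE)$ contributes $k^2\int_D\frac{\tilde\sigma}{\omega\varepsilon_0}|\bfE|^2$, and the DtN boundary terms contribute a nonnegative imaginary part coming exactly from the traveling modes (this nonnegativity is a standard consequence of the outgoing radiation condition — energy flux is outward — and can be read off from (\ref{fwd-dtn-explicit-form}) using that $h_m,g_n>0$ for traveling modes). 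Adding these forces $\bfE\equiv 0$ on the absorbing set, and then a unique continuation argument (Maxwell system with piecewise-smooth coefficients) propagates $\bfE\equiv 0$ throughout $W_{(-R,R)}$, and across $\Sigma_{\pm R}$ by the ORC. If $\tilde\sigma\equiv 0$, the form $a$ depends analytically on $k$ in a complex neighborhood of the real axis (Lemma \ref{fwd-dtn-bdd&analytic} gives analyticity of $T^{\pm}_s$), so by the analytic Fredholm theorem the resolvent $(\mathcal{A}+\mathcal{K})^{-1}$ exists except on a discrete set with no finite accumulation point, provided we exhibit one value of $k$ (equivalently, one $k$ in the connected complex neighborhood, e.g. with large positive imaginary part or via the absorbing argument applied to a lossy comparison problem) at which the operator is invertible.

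The main obstacle I anticipate is not the Fredholm bookkeeping but the \textbf{unique continuation / uniqueness step}: justifying that vanishing of $\bfE$ on the (possibly small) absorbing subdomain forces $\bfE\equiv 0$ on all of $W_{(-R,R)}$ requires a unique continuation principle for the second-order Maxwell system $\cl\cl\bfE-k^2\varepsilon\bfE=\bfO$ with merely piecewise-smooth $\varepsilon$, which is delicate across the material interface $\partial D$ and is the place where the Lipschitz regularity of $D$ and the transmission conditions must be handled carefully; the nonnegativity of the imaginary part of the DtN contribution (needed both here and, implicitly, to rule out spurious interior resonances leaking energy) is the secondary technical point that must be verified directly from (\ref{fwd-dtn-explicit-form}).
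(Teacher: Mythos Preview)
Your high-level strategy matches the paper's: cast (\ref{fwd-total:weak_form}) as a Fredholm problem, split $T^{\pm}_R$ into a sign-definite evanescent tail plus a finite-rank traveling part, and settle injectivity by an absorption argument when $\tilde\sigma>0$ or by analytic Fredholm theory when $\tilde\sigma=0$. The uniqueness step you flag as the main obstacle is actually routine here---the paper simply invokes the unique continuation principle from \cite{MonkBook}---and for the non-absorbing case the paper takes $k=ic$ with $c>0$ \emph{small}, not large, so as to remain inside the neighborhood where Lemma~\ref{fwd-dtn-bdd&analytic} furnishes analyticity of $T^{\pm}_s$.

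The genuine gap is in the Fredholm bookkeeping on the gradient component, which you dismiss too quickly. You propose to make $\tilde a(\bfE,\bfv)=(\cl\bfE,\cl\bfv)+(\nabla\varphi,\nabla\psi)+\cdots$ coercive and to push $-k^2(\varepsilon\bfE,\bfv)-(\nabla\varphi,\nabla\psi)$ into the compact remainder. On $\bfX_0$ that is fine via the compact embedding, but restricted to $\nabla S$ the remainder is essentially $-(k^2\varepsilon+1)(\nabla\varphi,\nabla\psi)$, a bounded \emph{non-compact} form in the natural norm $\|\nabla\varphi\|_{L^2}$ of that subspace; no ``bounded-plus-compact rearrangement'' cures this. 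This is precisely why the paper does \emph{not} attempt a single coercive-plus-compact splitting on all of $\bfX_{(-R,R)}$. Instead it uses a tailored decomposition $\bfX_{(-R,R)}=\nabla S_{(-R,R)}\oplus\bfX^+_{(-R,R)}$ in which $\bfX^+_{(-R,R)}$ is defined by an $\varepsilon$-weighted orthogonality condition that \emph{incorporates the DtN map}. With this choice the scalar unknown $p$ decouples and is solved first by Lax--Milgram (the form $k^2(\varepsilon\nabla p,\nabla q)-\langle T^{\pm}_R(\nc\times\nabla p),\boldsymbol{\gamma}_T\nabla q\rangle$ is itself coercive on $S_{(-R,R)}$), and only the residual problem on $\bfX^+_{(-R,R)}$ takes the desired form $(I+B_k)\bfE^+=\bff$ with $B_k$ compact and analytic in $k$. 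Your plain divergence-free/gradient split neither decouples the gradient problem nor delivers the compactness you assert.
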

\begin{remark}
In  the remainder of the paper we assume that $k$ is such that the forward problem is well-posed.
\end{remark}

\begin{proof}
We now give a sketch of the proof of this result.  For more details of a similar argument in the PEC case see
\cite[Section 3.2]{FanPhD}: 
First note that the space $\bfX_{(-R,R)}$ admits the Helmholtz decomposition $\bfX_{(-R,R)}= \gd S_R \oplus \bfX^+_{(-R,R)} $, where
$$
\begin{array}{l}
S_{(-R,R)}= \{ p\in H^1(W_{(-R,R)}) ; \, p=0 \,\mbox{on } \Gamma _{(-R,R)} \} \, ,\\[1ex]
\bfX_{(-R,R)}^+=\big\{ \bfv^+\!\!\in\! \bfX_{(-R,R)} ; \, k^2 ( \varepsilon\bfv^+ ,\gd q)_{W_{(-R,R)}} \\[.5ex]
\hspace*{5cm} = \langle T^{\pm }_{R}(\nc\times\bfv^+) , \boldsymbol{\gamma}_T (\gd q)\rangle _{\Sigma _{\pm R}} \forall  q\in S_{(-R,R)} \big\} \, ; 
\end{array}
$$
see \cite[Lemma 3.2.2]{FanPhD} for a similar result.  Then
we can 
write $\bfE=\gd p+\bfE^+$ with $p\in S_{(-R,R)}$ and $\bfE^+\in\bfX_{(-R,R)}^+$. The problem for $p$ decouples by taking the test function in (\ref{fwd-total:weak_form}) with the form $\bfv=\nabla q$, and  $p\in S_{(-R,R)}$ must solve 
		\begin{equation*}
		k^2  ( \varepsilon\gd p ,\gd q )_{W_{(-R,R)}} -\langle T^{\pm }_{R}(\nc\times\gd p) , \boldsymbol{\gamma}_T ({\gd q}) \rangle _{\Sigma _{\pm R}} 
		=
		\mathcal{F} (\gd q)
		\quad\forall  q\in S_{(-R,R)} \, .
		\end{equation*}
This auxiliary problem satisfies the hypotheses of the Lax-Milgram lemma (see \cite[Lemma $3.2.1$]{FanPhD}) and, in consequence $p\in S_{(-R,R)}$ is uniquely determined.

Using the function $p$ from the previous step, the forward scattering problem can be rewritten in terms of $\bfE^+\in\bfX_{(-R,R)}^+$ as follows:
		\begin{equation}\label{fwd-total:weak_form+}
\begin{array}{l}
\displaystyle\big( \cl\bfE^+,\cl {\bfv}^+\big)_{W_{(-R,R)}} - k^2\big(\varepsilon\bfE^+, {\bfv}^+\big)_{W_{(-R,R)}} 
+\, \big\langle T^{ \pm }_{R}(\nc\times\bfE^+) ,\boldsymbol{\gamma}_T {\bfv}^+ \big\rangle_{\Sigma_{\pm R}} \, = \,
\tilde{\mathcal{F}}(\bfv^+)
\end{array}
\end{equation}
for all $\bfv^+\in\bfX_{(-R,R)}^+$, where $\displaystyle \tilde{\mathcal{F}}(\bfv^+) 
=
 \mathcal{F}(\bfv^+)
+ k^2  \big(\varepsilon \gd p ,{\bfv}^+\big)_{W_{(-R,R)}}
-  \big\langle  T^{\pm}_{ R}(\nc\times\gd p) , \boldsymbol{\gamma}_T {\bfv}^+  \big\rangle _{\Sigma_{\pm R}}$. 

Notice that
\begin{itemize}
\item the sesquilinear form $a_k^+:\bfX_{(-R,R)}^+\times \bfX_{(-R,R)}^+\to \mathbb{C}$ defined by
\begin{equation*}
\begin{array}{l}
a_k(\bfu^+,\bfv^+)\, = \,\displaystyle ( \cl\bfu^+,\cl {\bfv}^+ )_{W_{(-R,R)}} + k^2 (\varepsilon\bfu^+, {\bfv}^+ )_{W_{(-R,R)}}
+ \langle T^{\pm ,0}_{R}(\nc\times\bfu^+) ,\boldsymbol{\gamma}_T {\bfv}^+ \rangle_{\Sigma_{\pm R}} 
\, ,
\end{array}
\end{equation*}
is bounded and coercive;
\item $\bfX_{(-R,R)}^+$ is compactly embedded in $L^2(W_{(-R,R)})^3$; 
 \item the DtN map $T^{ \pm }_{R} : \widetilde{H}^{-1/2}(\mathrm{div},\Sigma_{\pm R}) \to \widetilde{H}^{-1/2}(\mathrm{div},\Sigma_{\pm R}) $ is the superposition of a positive operator $T^{\pm ,0 }_{R}$ and a compact map $T^{\pm ,c }_{R}$ (see \cite[Lemma 3.2.4]{FanPhD}).
\end{itemize}
This allows us to rewrite  (\ref{fwd-total:weak_form+}) in operator form as  \[
 (I + B_k) \,\bfE^+ = \bff,
 \] where the operator $B_k:\bfX_{(-R,R)}^+\to \bfX^+_{(-R,R)}$ is compact and analytic with respect to $k$ in a suitable subdomain of the complex plane containing the real line (removing the cut-off frequencies, see paragraph \ref{subsec:forward:modal}).  Now we can see that
 this operator equation   admits at most one solution $\bfE^+\in \bfX_{(-R,R)}^+$ in the following two cases:
	\begin{itemize} 
	\item for any real wavenumber $k\in\mathbb{R}$, if $\tilde{\sigma}\geq\sigma_0>0$ in $D$ (or in some open bounded subdomain of $D$ with non-zero measure);
	\item for any pure imaginary $k = ic$ with $c>0$ small enough, if $\tilde{\sigma}=0$ in $D$;
	\end{itemize}
see the proof of \cite[Theorem 3.2.1]{FanPhD} for an analogous result.

Then, by applying the analytic Fredholm theory~\cite{ColtonKress-IP}, we conclude that the forward problem is well-posed except for (at most) a countable set of real wavenumbers.
\end{proof}

%
\section{Inverse Problem}\label{wg:inverse}

In this section, we shall provide a theoretical basis for the Linear Sampling Method (LSM)  approach to the inverse problem in the waveguide geometry. There are two important results here: the uniqueness of the solution of the inverse problem, and the justification of the LSM for the reconstruction of the shape of scatterer.
To this end, we recall some results about the background Green's function in the waveguide. It is well-known that, for electromagnetic waves, the Green's functions are dyadic functions (second order tensors that can be written as $3\times3$ matrices) with appropriate boundary conditions on $\Gamma$. Specifically, we here consider the electric Green's function $\bbG_{e}$ which satisfies a PEC condition on the boundary $\Gamma$ of the waveguide:
\begin{eqnarray*}
\clx\clx\bbG_{e}(\bfx,\bfy) - k^2\bbG_{e}(\bfx,\bfy) \, = \,\delta_{\bfx-\bfy}  \, \bbI \quad\mbox{in } W\, ,\\[1ex] 
\boldsymbol{\nu}\times\bbG_{e}(\bfx,\bfy) \, = \, \bfO\quad\mbox{on }\Gamma \, , 
\end{eqnarray*}
where  $\bfy\in W$ represents the point source and $\bfx\in W$ any evaluation point. Moreover, $\bbI$ is the identity matrix, whereas $\mathbf{a}\times\mathbb{B}$ and $\cl\mathbb{B}$ denote the matrices whose $l$-th columns are $\mathbf{a}\times\mathbf{b}_l$ and $\cl \mathbf{b}_l$, respectively, for  any column vector function $\mathbf{a}$ and any dyadic function $\mathbb{B}$ (written as a matrix with columns $\mathbf{b}_l$). In particular,
an incident wave due to a point source at $\bfz\in W_R\setminus\overline{D}$ with polarization $\bfp\ ( |\bfp |\not=0)$ is given by $\bfu^i(\bfx;\bfz,\bfp)=\bbG_e(\bfx,\bfz)\bfp$; and we recall that $\bfu^s (\bfx;\bfz,\bfp)$ stands for its associated scattered field (that is, the solution of the forward problem (\ref{fwd-sc:problem_truncated}) for such an incident wave), and  similar notation is used for the associated total field $ \bfu (\bfx;\bfz,\bfp) = \bfu^s (\bfx;\bfz,\bfp ) + \bfu^i(\bfx;\bfz,\bfp)$. %

We wish to compare the above electric Green's function in the waveguide with the one in free space, which solves
\begin{equation*}
\clx\clx\bbG_{0}(\bfx,\bfy) - k^2\bbG_{}(\bfx,\bfy) \, = \,\delta_{\bfx-\bfy}  \, \bbI \quad\mbox{in } \mathbb{R}^3\, ,\
\end{equation*}
together with a tensor form of the Silver-Muller radiation condition. This can be written explicitly as
\begin{eqnarray}
\bbG_0(\bfx,\bfy) = \Phi(\bfx,\bfy)\bbI + \frac{1}{k^2}\gd_\bfy\gd_\bfy\Phi(\bfx,\bfy)\quad \forall \bfx,\bfy\in\mathbb{R}^3 \mbox{ with }\bfx\ne\bfy \, , \label{inv-green-free-space}
\end{eqnarray}
where $\Phi$ stands for the fundamental solution of Helmholtz equation:
$$
\D \Phi (\bfx,\bfy) = \frac{\exp(ik |\bfx-\bfy |)}{4\pi |\bfx-\bfy |} \quad \forall \bfx,\bfy\in\mathbb{R}^3 \mbox{ with }\bfx\ne\bfy\, ,
$$
and  $\gd_\bfy\gd_\bfy\Phi(\bfx,\bfy)$ denotes its Hessian matrix.
The following result is proven in \cite[Lemma 3.3.1]{FanPhD} and provides a decomposition of Green's function in the waveguide in terms of that in free-space. This provides a clear statement of the singularity present in the 
waveguide fundamental solution {(the result in \cite{FanPhD} was proved when $\Sigma$ has a smooth boundary; here the regularity of the remainder term is determined by the cross section $\Sigma$).}
 \begin{lemma}\label{inv:green-decompose}  In any bounded segment $W_{(s_1,s_2)}$ of the waveguide $W$, the electric type dyadic Green's function for the waveguide can be decomposed into $\bbG_e = \bbG_0 + \bbJ$, 
where 
the 
dyadic function $\bbJ$ is {in $H^1_{\rm{}loc}$.}
\end{lemma}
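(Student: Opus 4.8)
The plan is to exploit the fact that, away from its source point, the waveguide Green's function $\bbG_e$ and the free-space Green's function $\bbG_0$ satisfy the \emph{same} second-order system $\clx\clx\, \cdot - k^2\, \cdot = \bfO$ in a neighborhood of any point, so the difference $\bbJ := \bbG_e - \bbG_0$ solves this system \emph{classically} (column by column) in $W_{(s_1,s_2)}$, with no Dirac mass: formally, $\delta_{\bfx-\bfy}\bbI$ cancels. The remaining issue is therefore purely a boundary-regularity question on $\Gamma_{(s_1,s_2)}$, driven by the regularity of the cross-section $\Sigma$, since $\bbG_0$ does \emph{not} satisfy the PEC boundary condition on $\Gamma$ while $\bbG_e$ does. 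I would first fix $\bfy\in W$, take $R_1<R_2$ with $s_1<R_1<R_2<s_2$ and $y_3\notin[R_1,R_2]$ is \emph{not} what we want — rather we want $\bfy$ possibly inside the slab, so instead I would localize: it suffices to prove $\bbJ\in H^1$ near an arbitrary point $\bfx_0\in \overline{W_{(s_1,s_2)}}$ with $\bfx_0\neq\bfy$, and separately handle a ball around $\bfy$.

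The key steps, in order. (1) Away from $\bfy$: on any subslab $W_{(R_1,R_2)}$ with $[R_1,R_2]\subset(s_1,s_2)$ and (after a further localization in $x_3$) $\bfy\notin\overline{W_{(R_1,R_2)}}$, each column $\bbJ_l$ of $\bbJ$ satisfies $\clx\clx\bbJ_l - k^2\bbJ_l=\bfO$ in $W_{(R_1,R_2)}$ with $\boldsymbol{\nu}\times\bbJ_l = -\,\boldsymbol{\nu}\times(\bbG_0)_l$ on $\Gamma_{(R_1,R_2)}$, and on the end caps $\Sigma_{R_1},\Sigma_{R_2}$ it matches the smooth (indeed real-analytic, since $\bfy$ is away) traces of $\bbG_0$. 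The right-hand-side and boundary data are all \emph{smooth} on those surfaces because $\bbG_0$ is smooth away from its singularity. Then $\bbJ_l$ is the solution of a standard interior Maxwell boundary-value problem in a bounded Lipschitz domain with smooth data; elliptic regularity for the $H(\tcl)$–$H(\tdv)$ system on a Lipschitz domain — or, using the convexity of $\Sigma$ and hence of $W_{(R_1,R_2)}$, the $H^1$ regularity theory for Maxwell in convex polyhedra/prisms (Amrouche–Bernardi–Dauge–Girault, Costabel–Dauge) — gives $\bbJ_l\in H^1(W_{(R_1,R_2)})^3$. This is where the hypothesis that $\Sigma$ is convex is used, and it is the reason the statement says ``the regularity of the remainder term is determined by the cross section''. (2) Near $\bfy$: choose a ball $B\subset W$ with $\bfy\in B$ and $\overline{B}$ away from $\Gamma$; here both $\bbG_e$ and $\bbG_0$ have the \emph{same} leading singularity $\Phi(\bfx,\bfy)\bbI + k^{-2}\gd_\bfy\gd_\bfy\Phi(\bfx,\bfy)$ (the free-space kernel), because the waveguide Green's function is constructed precisely by adding a smooth correction enforcing the boundary condition — more rigorously, $\bbG_e-\bbG_0$ solves $\clx\clx(\cdot)-k^2(\cdot)=\bfO$ in $B$ in the distributional sense, with no source, hence is \emph{smooth} (real-analytic) in $B$ by interior elliptic regularity for this system. (3) Patch the two regions with a partition of unity subordinate to $\{W\setminus\{\bfy\},\,B\}$; since $H^1_{\rm loc}$ is a local property and both pieces are $H^1$ on their domains, $\bbJ\in H^1_{\rm loc}(W_{(s_1,s_2)})$.

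The main obstacle — and the only genuinely nontrivial point — is step (1): establishing full $H^1$ regularity up to the boundary $\Gamma$ for the Maxwell-type system with the given (smooth but nonzero) tangential trace on $\Gamma$. On a general Lipschitz domain the electric field of a Maxwell problem need only lie in $H^{1/2+\epsilon}$, so one really must invoke the convexity of $\Sigma$: $W_{(R_1,R_2)}=\Sigma\times(R_1,R_2)$ is a convex (prismatic) Lipschitz domain, and for convex domains the Maxwell regularity results quoted above (and consistent with the $H^2(\Sigma)$ regularity of the eigenfunctions $u_m,v_n$ already noted in Section~\ref{subsec:forward:modal}) yield $H^1$. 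An alternative, self-contained route avoiding citation of polyhedral Maxwell regularity is to expand $\bbJ_l$ on each cross-section in the modal basis $\{\bfM_m,\bfN_n\}$ together with the gradient modes $\gd u_m$, track the decay of the modal coefficients using the $\widetilde H^{-1/2}(\mathrm{div})$/$\widetilde H^{-1/2}(\mathrm{curl})$ norm characterizations in~(\ref{norms}) and the exponential decay of evanescent modes away from $\bfy$, and verify directly that the resulting series converges in $H^1$; the smoothness of the data away from the source makes all series rapidly convergent except near $\bfy$, which is handled by step (2). I expect to present the citation-based argument as the main line and mention the modal argument as a remark.
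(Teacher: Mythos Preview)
The paper does not actually prove this lemma in-text; it simply cites \cite[Lemma~3.3.1]{FanPhD} and remarks that the regularity of $\bbJ$ is governed by the cross-section $\Sigma$. So there is no in-paper argument to compare against, and your proposal supplies what the paper omits. Your overall strategy---show that each column $\bbJ_l$ solves the homogeneous Maxwell system with smooth tangential data $-\boldsymbol{\nu}\times(\bbG_0)_l$ on $\Gamma$, then invoke $H^1$ regularity for Maxwell fields on convex domains---is the natural one and almost certainly what the cited thesis does (for smooth $\Sigma$ there, via convexity here).

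Two things should be cleaned up. First, the splitting into ``away from $\bfy$'' and ``near $\bfy$'' is unnecessary and, as written, leaves a gap: a boundary point $\bfx_0\in\Gamma$ with $x_{0,3}=y_3$ is covered by neither step~(1) (no subslab containing $\bfx_0$ avoids $\bfy$) nor step~(2) (the ball $B$ is interior). The clean route is to run your step~(2) \emph{first and globally}: since $\bbJ_l$ is a distributional solution of $\cl\cl\bbJ_l-k^2\bbJ_l=\bfO$ in all of $W$ (the Dirac sources cancel), taking the divergence gives $\nabla\cdot\bbJ_l=0$, whence each component satisfies the Helmholtz equation and interior elliptic regularity yields $\bbJ_l\in C^\infty(W)$. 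In particular $\bbJ_l\in H(\tcl,W_{(s_1',s_2')})\cap H(\tdv,W_{(s_1',s_2')})$ for any slightly larger slab $(s_1',s_2')\supset [s_1,s_2]$, with no need to dodge $\bfy$. Then the convex-domain embedding $H(\tcl)\cap H(\tdv)\hookrightarrow H^1$ (Costabel; Amrouche--Bernardi--Dauge--Girault), applied on the convex prism $W_{(s_1',s_2')}$ after lifting the smooth inhomogeneous tangential data on $\Gamma_{(s_1',s_2')}$, gives $\bbJ_l\in H^1(W_{(s_1,s_2)})$. Second, your sentence ``on the end caps $\Sigma_{R_1},\Sigma_{R_2}$ it matches the smooth \ldots\ traces of $\bbG_0$'' is not correct: the end-cap trace of $\bbJ_l$ is $\nc\times(\bbG_e)_l-\nc\times(\bbG_0)_l$, not a trace of $\bbG_0$ alone. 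You do not need an explicit description of the end-cap data at all once you work on the larger slab and use that $\bbJ_l$ is already known to be $H(\tcl)$ there.
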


\subsection{Uniqueness Result}\label{wg:inverse:uniqueness}
In order to prove the uniqueness result for the inverse problem, we start by stating two lemmas: The first gives us a \emph{representation formula} of the solution of the forward problem in the waveguide excluding the scatterer $D$; this result is analogous to the well-known Stratton-Chu formula in free-space and is proven in \cite[Lemma 3.3.2]{FanPhD}.
 \begin{lemma}\label{inv:uniqueness:rep_fml}(Representation Formula) 
Let $\bfU$ be a solution of the Maxwell's system 
\begin{equation*}
\begin{array}{ll}
\cl\cl\bfU- k^2 \bfU \, =  \, \bfO & \quad\mbox{in } W\setminus\overline{D}\, ,\\[1ex]
\boldsymbol{\nu}\times\bfU \, =\, \bfO &\quad\mbox{on }\Gamma \, , \\[1ex]
\mbox{$\bfU$ satisfies the ORC} & \quad\mbox{for }  \vert x_3\vert \to +\infty \, .
\end{array}
\end{equation*}
Then 
$$ 
\bfU(\bfx) 
= \int_{\pd {D} } \!\!\! \Big(
\big( \boldsymbol{\nu}_{D}\times (\cl\bfU(\bfy)) \big)\cdot\bbG_e(\bfx,\bfy)
\,-\, \bfU(\bfy)\cdot\big( \boldsymbol{\nu}_{D}\times (\cly\bbG_e(\bfx,\bfy)) \big) 
 \Big) dS_{\bfy} \, . 
$$
\end{lemma}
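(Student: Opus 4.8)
The plan is to establish the Stratton--Chu type representation formula by combining the fundamental property of the dyadic Green's function $\bbG_e$ with a vector Green's (Green--Stokes) identity applied on the region $W_{(-R,R)}\setminus\overline{D}$, and then passing to the limit $R\to\infty$ using the outgoing radiation condition. First I would fix $\bfx\in W\setminus\overline D$ and choose $R$ large enough that $\overline D\subset W_{(-R,R)}$ and $x_3\in(-R,R)$; I would also excise a small ball $B_\rho(\bfx)$ to account for the singularity of $\bbG_e(\cdot,\bfx)$, so that the working domain is $\Omega_{\rho,R}=W_{(-R,R)}\setminus(\overline D\cup \overline{B_\rho(\bfx)})$. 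On $\Omega_{\rho,R}$ both $\bfU$ and each column of $\bbG_e(\cdot,\bfx)$ solve $\cl\cl\,\cdot\,-k^2\,\cdot=\bfO$, so the vector Green's identity
\[
\int_{\Omega_{\rho,R}}\!\!\big((\cl\cl\bfU)\cdot\overline{\bfV}-\bfU\cdot\overline{\cl\cl\bfV}\big)\,d\bfx
=\int_{\pd\Omega_{\rho,R}}\!\!\big((\tn\times\bfU)\cdot(\cl\overline{\bfV})-(\tn\times\overline{\bfV})\cdot(\cl\bfU)\big)\,dS
\]
(applied columnwise with $\bfV$ running over the columns of $\bbG_e(\cdot,\bfx)$, and using the matrix notations $\bfa\times\mathbb{B}$, $\cl\mathbb{B}$ introduced before Lemma~\ref{inv:green-decompose}) makes the volume term vanish, leaving only boundary contributions from $\pd D$, $\Sigma_{\pm R}$, $\Gamma_{(-R,R)}$ and $\pd B_\rho(\bfx)$.

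The next step is to evaluate or kill each boundary piece. On $\Gamma_{(-R,R)}$ the PEC conditions $\tn\times\bfU=\bfO$ and $\tn\times\bbG_e(\bfx,\cdot)=\bfO$ make the integrand vanish identically. On $\Sigma_{\pm R}$ I would argue that the contribution tends to $0$ as $R\to\infty$: both $\bfU$ and $\bbG_e(\bfx,\cdot)$ satisfy the ORC, so each expands in the modes $\bfM_m,\bfN_n$ with only evanescent and outgoing terms for $|x_3|$ large; the standard cancellation of the outgoing--outgoing cross terms (the modes are ``orthogonal'' in the relevant bilinear pairing, exactly as for the radiation condition identities in Lemma~\ref{fwd:half_pipe:unbdd}) together with exponential decay of the evanescent modes forces this term to zero. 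On the small sphere $\pd B_\rho(\bfx)$, using the decomposition $\bbG_e=\bbG_0+\bbJ$ from Lemma~\ref{inv:green-decompose} with $\bbJ\in H^1_{\rm loc}$, only the $\bbG_0$ part is singular, and the classical local analysis of the free-space dyadic Green's function gives that, as $\rho\to0$, the sphere integral reproduces $\bfU(\bfx)$ (this is the usual ``$\delta$-function'' reproduction, following e.g. the Stratton--Chu computation in free space). What remains is precisely the integral over $\pd D$, with the orientation of $\tn_D$ as the outward normal to $D$ (hence pointing into $W\setminus\overline D$, consistent with the statement), giving the claimed formula
\[
\bfU(\bfx)=\int_{\pd D}\!\Big(\big(\tn_D\times(\cl\bfU(\bfy))\big)\cdot\bbG_e(\bfx,\bfy)-\bfU(\bfy)\cdot\big(\tn_D\times(\cly\bbG_e(\bfx,\bfy))\big)\Big)\,dS_{\bfy}.
\]
Here I would be careful with the symmetry/reciprocity of $\bbG_e$ and with transposes so that the contraction orders match the statement; since $\bbG_e$ is an electric-type Green's function it satisfies $\bbG_e(\bfx,\bfy)=\bbG_e(\bfy,\bfx)^{\!\top}$, which is what lets one swap the roles of source and evaluation point cleanly.

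The main obstacle I expect is the rigorous treatment of the two limits simultaneously with only $H^1_{\rm loc}$ (rather than smooth) regularity of the remainder $\bbJ$ and only $H(\tcl)$ regularity of $\bfU$ near $\pd D$: the boundary integrals over $\pd D$ and over $\pd B_\rho(\bfx)$ must be interpreted as duality pairings between traces in $\widetilde H^{-1/2}(\tdv,\cdot)$ and $H^{-1/2}(\tcl,\cdot)$ (or their analogues on $\pd D$), and one needs a density argument to justify integrating by parts and to make sense of $\tn_D\times(\cl\bfU)$. A clean way around this is to first prove the identity for smooth data on a smooth auxiliary surface enclosing $D$ (where classical Stratton--Chu applies verbatim), then deform that surface to $\pd D$ using that $\bfU$ solves the homogeneous Maxwell system in $W\setminus\overline D$ (so the integrand is ``closed'' in the appropriate sense and the surface integral is invariant under homotopies avoiding $\bfx$), and finally invoke the cited analogue in \cite[Lemma~3.3.2]{FanPhD} for the technical trace estimates. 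The $R\to\infty$ limit is comparatively routine once the modal orthogonality relations underlying the ORC are in hand, but it does require noting that $\bbG_e(\bfx,\cdot)$ itself satisfies the ORC away from $\bfx$, which follows from its modal construction in Section~\ref{subsec:forward:modal}.
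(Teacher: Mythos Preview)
Your proposal is correct and follows exactly the approach the paper indicates: the paper does not give a detailed proof but refers to \cite[Lemma~3.3.2]{FanPhD} and remarks that ``this result is proved in the same way as the corresponding Stratton--Chu formula in free-space~\cite{Kirsch-Hettlich},'' which is precisely the vector Green's identity argument on $W_{(-R,R)}\setminus(\overline D\cup\overline{B_\rho(\bfx)})$ with the PEC condition killing the $\Gamma$ contribution, the ORC killing the $\Sigma_{\pm R}$ contribution, and the local singularity of $\bbG_0$ (via Lemma~\ref{inv:green-decompose}) reproducing $\bfU(\bfx)$. Your caveats about trace regularity and the symmetry $\bbG_e(\bfx,\bfy)=\bbG_e(\bfy,\bfx)^{\!\top}$ are appropriate and consistent with how the thesis handles these details.
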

\begin{remark}\begin{enumerate}\item
Above, the dot-product is understood as vector-matrix or matrix-vector multiplication depending on the position of dyadic and vector functions. The next result states that the scattered field satisfies the standard reciprocity relation.
\item This result is proved in the same way as the corresponding Stratton-Chu formula in free-space~\cite{Kirsch-Hettlich}.
\end{enumerate}
\end{remark}
 \begin{lemma}\label{inv:uniqueness:reciprocity}(Reciprocity relation) 
For any points $\bfx,\bfz\in W\backslash\overline{D}$ and polarization vectors $\bfp,\bfq\in\mathbb{R}^3$ ($ |\bfp |\not=0,\;  |\bfq |\not=0$), it holds
 $$ \bfu^s(\bfx;\bfz,\bfp)\cdot\bfq = \bfu^s(\bfz;\bfx,\bfq)\cdot\bfp\, . $$
\end{lemma}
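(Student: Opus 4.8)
The plan is to follow the classical argument for electromagnetic reciprocity, adapting the free-space proof (as in \cite{Kirsch-Hettlich}) to the waveguide geometry by using the representation formula of Lemma~\ref{inv:uniqueness:rep_fml} together with the symmetry of the dyadic Green's function $\bbG_e$. The first ingredient I would establish is that $\bbG_e$ is symmetric in the sense $\bbG_e(\bfx,\bfy) = \bbG_e(\bfy,\bfx)^{\top}$ for $\bfx\neq\bfy$ in $W$. This follows from applying a vector Green's second identity to the columns of $\bbG_e(\cdot,\bfx)$ and $\bbG_e(\cdot,\bfy)$ over a large truncated section $W_{(s_1,s_2)}$ containing $\bfx$ and $\bfy$: the volume term vanishes since both satisfy $\cl\cl\,\cdot - k^2\,\cdot = \bfO$ away from the source, the boundary term on $\Gamma$ vanishes by the PEC condition $\boldsymbol{\nu}\times\bbG_e = \bfO$, and the contributions on the two cross-sections $\Sigma_{s_1},\Sigma_{s_2}$ cancel because both fields satisfy the ORC there (this is exactly where the modal orthogonality and the outgoing condition enter — the mixed bilinear boundary terms for two purely outgoing solutions cancel). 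Isolating the delta sources then gives the transpose symmetry.

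Next, I would apply the representation formula of Lemma~\ref{inv:uniqueness:rep_fml} to $\bfU = \bfu^s(\cdot;\bfz,\bfp)$, which solves the homogeneous Maxwell system in $W\setminus\overline D$, satisfies the PEC condition on $\Gamma$, and satisfies the ORC: this yields
\begin{equation*}
\bfu^s(\bfx;\bfz,\bfp) = \int_{\pd D}\Big( \big(\boldsymbol{\nu}_D\times\cl\bfu^s(\bfy;\bfz,\bfp)\big)\cdot\bbG_e(\bfx,\bfy) - \bfu^s(\bfy;\bfz,\bfp)\cdot\big(\boldsymbol{\nu}_D\times\cly\bbG_e(\bfx,\bfy)\big)\Big)\,dS_{\bfy}.
\end{equation*}
Dotting with $\bfq$ and using the symmetry of $\bbG_e$, the right-hand side becomes an expression in which $\bbG_e(\bfx,\cdot)\bfq = \bfu^i(\cdot;\bfx,\bfq)$ appears; so $\bfu^s(\bfx;\bfz,\bfp)\cdot\bfq$ is written as a boundary integral over $\pd D$ pairing $\bfu^s(\cdot;\bfz,\bfp)$ against traces of $\bfu^i(\cdot;\bfx,\bfq)$. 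The goal is then to recognize this as a symmetric bilinear form in the two incident fields. To do this I would use the transmission/jump conditions satisfied by the total fields $\bfu(\cdot;\bfz,\bfp)$ and $\bfu(\cdot;\bfx,\bfq)$ across $\pd D$ (continuity of the tangential components of the field and of $\cl$ of the field, up to the coefficient $\varepsilon$), together with a Green's identity applied inside $D$ to the two total fields, to rewrite everything so that $\bfu^i = \bfu - \bfu^s$ can be substituted and the $\bfu^s$–$\bfu^s$ and $\bfu^i$–$\bfu^i$ cross-terms are seen to be symmetric by an independent Green's-identity argument in $W_{(-R,R)}\setminus\overline D$ (again killing the cross-section terms via the ORC and the $\Gamma$-terms via the PEC condition).

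The main obstacle I anticipate is the careful bookkeeping of the boundary pairings: making sure the duality pairings $\langle\,\cdot\,,\,\cdot\,\rangle_{\pd D}$ and $\langle\,\cdot\,,\,\cdot\,\rangle_{\Sigma_{\pm R}}$ are justified in the correct trace spaces ($\widetilde H^{-1/2}(\mathrm{div},\cdot)$ versus $\widetilde H^{-1/2}(\tcl,\cdot)$), and that the integration-by-parts identities $\int (\cl\bfA\cdot\bfB - \bfA\cdot\cl\bfB) = \int_{\partial}(\boldsymbol{\nu}\times\bfA)\cdot\bfB$ hold with the right orientation of normals on $\pd D$ and on the truncating cross-sections. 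The delicate point where the waveguide differs from free space is the vanishing of the cross-section boundary terms; but since both fields in each pairing are genuinely outgoing, this cancellation is exactly the content of the modal expansions in Lemma~\ref{fwd:half_pipe:unbdd} and the orthogonality of $\{u_m\}$ and $\{v_n\}$, so I would invoke that structure rather than re-deriving it. Once these identities are in place, the reciprocity relation $\bfu^s(\bfx;\bfz,\bfp)\cdot\bfq = \bfu^s(\bfz;\bfx,\bfq)\cdot\bfp$ drops out by symmetry.
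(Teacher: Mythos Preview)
Your plan is correct and shares the paper's core ingredients: the representation formula of Lemma~\ref{inv:uniqueness:rep_fml}, the symmetry $\bbG_e(\bfx,\bfy)=\bbG_e(\bfy,\bfx)^\top$, and Green's identities with the PEC condition on $\Gamma$ and the ORC on the cross-sections killing the exterior boundary terms. The difference lies in how the final symmetry is exhibited. You propose to stay at the level of boundary integrals over $\partial D$, substitute $\bfu^i=\bfu-\bfu^s$, and dispatch the $\bfu^s$--$\bfu^s$, $\bfu^i$--$\bfu^i$, and $\bfu$--$\bfu$ cross-terms one by one via separate Green's identities (outside $D$ for the scattered--scattered term, inside $D$ for the others). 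This works, but the paper takes a shorter route: after replacing $\bfu^s(\cdot;\bfz,\bfp)$ by the total field $\bfu(\cdot;\bfz,\bfp)$ in the boundary integral (your step with the vanishing $\bfu^i$--$\bfu^i$ contribution is exactly this), it applies the second vector--dyadic Green's identity once to convert the $\partial D$ integral into a volume integral over $D$. That gives
\[
\bfu^s(\bfx;\bfz,\bfp)\cdot\bfq - \bfu^s(\bfz;\bfx,\bfq)\cdot\bfp
= \int_D \Big( (\cl\cl\bfu_z)\cdot\bfu_x - \bfu_z\cdot(\cl\cl\bfu_x)\Big)\,d\bfy,
\]
with $\bfu_z=\bfu(\cdot;\bfz,\bfp)$ and $\bfu_x=\bfu(\cdot;\bfx,\bfq)$, and the right-hand side vanishes immediately since both total fields satisfy $\cl\cl\bfu=\varepsilon k^2\bfu$ in $D$. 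So the paper's organization avoids the case-by-case analysis of cross-terms and the separate appeal to the transmission conditions; the only place the penetrable nature of $D$ enters is through the common PDE satisfied by the two total fields inside $D$.
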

\begin{remark} This result also holds for a bounded perfectly conducting scatterer (incorrectly stated in \cite[Lemma 3.3.3]{FanPhD}).  The following proof is closely related to the proof of \cite[Lemma 3.3.3]{FanPhD}, but correcting the
statement of the result.
\end{remark}

\begin{proof}
By using the representation formula in Lemma~\ref{inv:uniqueness:rep_fml}, we have
\begin{equation*}
\begin{array}{l}
    \bfu^s(\bfx;\bfz,\bfp) = \displaystyle\int_{\pd D} \!\Big( -\bfu^s (\bfy;\bfz,\bfp )\cdot \big( \nd\times(\cly\bbG_e(\bfx,\bfy) ) \big) 
+  \big( \nd\times(\cl\bfu^s(\bfy;\bfz,\bfp)) \big) \cdot\bbG_e(\bfx,\bfy)\Big) \, dS_{\bfy}\, .
\end{array}
\end{equation*}
In addition,  by using Green's second identity and  the properties of the electric type dyadic Green's function,
\begin{equation*}
\begin{array}{l}
    \boldsymbol{0} = \displaystyle\int_{\pd D} \!\Big( -\bbG_e (\bfy,\bfz)\bfp \cdot \big( \nd\times(\cly\bbG_e(\bfx,\bfy) ) \big) 
+  \big( \nd\times(\cl\bbG_e(\bfy,\bfz)\bfp) \big) \cdot\bbG_e(\bfx,\bfy)\Big) \, dS_{\bfy}\, ; 
\end{array}
\end{equation*}
see \cite[eq. (3.48)]{FanPhD}. Adding both results leads to
\begin{equation*}
\begin{array}{l}
    \bfu^s(\bfx;\bfz,\bfp) =\displaystyle \int_{\pd D} \Big(- \bfu (\bfy;\bfz,\bfp )\cdot \big( \nd\times(\cly\bbG_e(\bfx,\bfy) ) \big)
 +  \big( \nd\times(\cl \bfu (\bfy;\bfz,\bfp)) \big) \cdot\bbG_e(\bfx,\bfy)\Big) \, dS_{\bfy}\, ;
\end{array}
\end{equation*}
which, making use of dyadic identities and the second vector-dyadic Green's identity, can be  to rewritten as
\begin{equation}\label{r_c1tilde}
\begin{array}{l}
    \bfu^s(\bfx;\bfz,\bfp) =\displaystyle \int_{D} \Big( \big(\cl\cl \bfu  (\bfy;\bfz,\bfp )\big) \cdot \bbG_e(\bfx,\bfy) )
- \bfu (\bfy;\bfz,\bfp) \cdot \big(\cly\cly\bbG_e(\bfx,\bfy)\big) \Big) \, d\bfy\,  .
\end{array}
\end{equation}
Similarly, one can show that
\begin{equation*}
\begin{array}{l}
    \bfu^s(\bfz;\bfx,\bfq) \cdot \bfp
		=\displaystyle \int_{\pd D} \Big( -{\bfu}^s (\bfy;\bfx,\bfq )\cdot \big( \nd\times(\cl {\bfu} (\bfy;\bfz,\bfp ) ) \big) \Big. \\
  \hspace*{3.5cm}  \Big. +  \big( \nd\times(\cl {\bfu}^s(\bfy;\bfx,\bfq)) \big) \cdot {\bfu} (\bfy;\bfz,\bfp )\Big) \, dS_{\bfy}\, , 
\end{array}
\end{equation*}
and deduce that
\begin{equation}\label{r_c1tildebis}
\begin{array}{l}
    \bfu^s(\bfz;\bfx,\bfq) \cdot \bfp
		= \displaystyle\int_{D} \Big( \big(\cly\cly\bfu^s (\bfy;\bfx,\bfq )\big)\cdot \bfu (\bfy;\bfz,\bfp ) \Big.  \\
    \hspace*{3.5cm}  \Big. - {\bfu}^s(\bfy;\bfx,\bfq)) ) \cdot \big(\cl\cl \bfu (\bfy;\bfz,\bfp )\big) \Big) \, d\bfy \, .
\end{array}
\end{equation}
In consequence, from (\ref{r_c1tilde}) and (\ref{r_c1tildebis}) we have
\begin{equation*}
\begin{array}{l}
    \bfu^s(\bfx;\bfz,\bfp) \cdot\bfq  - \bfu^s(\bfz;\bfx,\bfq) \cdot \bfp \, =  \\
    \qquad =\, \displaystyle\int_{D} \!\Big( \big(\cl\cl \bfu  (\bfy;\bfz,\bfp )\big) \cdot  \bfu  (\bfy;\bfx,\bfq )
-  \bfu  (\bfy;\bfz,\bfp ) \cdot  \big(\cl\cl \bfu  (\bfy;\bfx,\bfq )\big)
		\Big) d\bfy 		\, , 
\end{array}
\end{equation*}
from which the result follows by noticing that  $ \cl\cl \bfu (\bfy;\bfz,\bfp ) = \varepsilon k^2 \bfu (\bfy;\bfz,\bfp ) $ and  $ \cl\cl \bfu (\bfy;\bfx,\bfq ) = \varepsilon k^2 \bfu (\bfy;\bfx,\bfq ) $ in $D$. 
\end{proof}

The above reciprocity relation is the basic tool we need to prove the uniqueness of solution for the inverse problem we are dealing with.  We state the result in slightly more generality than is needed for this paper.  More precisely, let us consider  two cross-sections $\Sigma_s$ and $\Sigma_r$ located on the same side {(i.e. both above or both below)} of the scatterer and with the receivers not further from the target than the sources; for the sake of simplicity, in the sequel we assume that both surfaces are below the scatterer, that is, $D\subseteq W_{(r,R)}$ and $s\leq r$. 
\begin{theorem}\label{inv:uniqueness} Let $D_1$ and $D_2$ be two penetrable scatterers completely contained in the waveguide and away from its boundary
, and with relative electric permittivities $\varepsilon_1$ and $\varepsilon_2$, respectively. 
If 
the tangential components of the fields $\bfu_1^s(\cdot;\bfy,\bfp)$ and $\bfu_2^s(\cdot;\bfy,\bfp)$, scattered by $D_1$ and $D_2$ respectively, coincide on $\Sigma_r$ for all 
sources $\bfy\in\Sigma_s$ and polarizations $\bfp\in\mathbb{R}^3$ ($ |\bfp |=1$), then  $D_1=
D_2$.
\end{theorem}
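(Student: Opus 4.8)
\emph{Proof proposal.} I would argue by contradiction, following the classical scheme of Isakov and Kirsch--Kress, but preceded by a reduction that is particular to the waveguide. Using the reciprocity relation (Lemma~\ref{inv:uniqueness:reciprocity}), the well--posedness of the blocked--waveguide problem (Lemma~\ref{fwd:half_pipe:unbdd}) and unique continuation for the second--order Maxwell system, I would first promote the hypothesis---equal tangential traces on the single cross--section $\Sigma_r$ for all sources on $\Sigma_s$---to the much stronger statement that the scattered fields produced by \emph{arbitrary} point sources in the unbounded part of $W\setminus(\overline{D_1}\cup\overline{D_2})$ coincide there. Throughout, let $\mathcal{O}$ be the unbounded connected component of $W\setminus(\overline{D_1}\cup\overline{D_2})$; since $D_1$ and $D_2$ are bounded and stay away from $\Gamma$, $\mathcal{O}$ is connected and contains both ends of the waveguide, in particular $W_{(-\infty,r)}$, $\Sigma_s$ and $\Sigma_r$.

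\emph{First reduction.} Fix $\bfy\in\Sigma_s$ and a polarization $\bfp$, and set $w:=\bfu_1^s(\cdot;\bfy,\bfp)-\bfu_2^s(\cdot;\bfy,\bfp)$. In $W_{(-\infty,r)}$, which lies strictly below both scatterers, $w$ is a solution of $\cl\cl w-k^2w=\bfO$ (the scattered fields are smooth away from the respective scatterers), satisfies $\boldsymbol{\nu}\times w=\bfO$ on the lateral boundary and the ORC as $x_3\to-\infty$, and by hypothesis $\nc\times w=\bfO$ on $\Sigma_r$; the uniqueness part of Lemma~\ref{fwd:half_pipe:unbdd} gives $w\equiv\bfO$ in $W_{(-\infty,r)}$, and unique continuation along the connected set $\mathcal{O}$ yields $\bfu_1^s(\cdot;\bfy,\bfp)=\bfu_2^s(\cdot;\bfy,\bfp)$ in $\mathcal{O}$, for all $\bfy\in\Sigma_s$ and $\bfp$. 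Next, for any $\bfx\in\mathcal{O}$ and any $\bfp$, applying Lemma~\ref{inv:uniqueness:reciprocity} to each scatterer gives $\bfu_j^s(\bfy;\bfx,\bfp)\cdot\bfq=\bfu_j^s(\bfx;\bfy,\bfq)\cdot\bfp$ for $\bfy\in\Sigma_s$ and all $\bfq$, and the right--hand sides coincide by what was just shown; hence $\bfu_1^s(\bfy;\bfx,\bfp)=\bfu_2^s(\bfy;\bfx,\bfp)$ for all $\bfy\in\Sigma_s$. Since $\widetilde w:=\bfu_1^s(\cdot;\bfx,\bfp)-\bfu_2^s(\cdot;\bfx,\bfp)$ is a \emph{smooth} radiating solution of $\cl\cl\widetilde w-k^2\widetilde w=\bfO$ in $\mathcal{O}$ (the source $\bfx$ is a regular point for the scattered fields) that vanishes on $\Sigma_s$, the same argument applied on $W_{(-\infty,s)}$, followed by unique continuation, gives $\bfu_1^s(\cdot;\bfx,\bfp)=\bfu_2^s(\cdot;\bfx,\bfp)$ in $\mathcal{O}$, hence $\bfu_1(\cdot;\bfx,\bfp)=\bfu_2(\cdot;\bfx,\bfp)$ in $\mathcal{O}$, for every $\bfx\in\mathcal{O}$ and every $\bfp$.

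\emph{The singular source argument.} Suppose $D_1\ne D_2$. Then the unbounded components of $W\setminus\overline{D_1}$ and of $W\setminus\overline{D_2}$ differ, so one can choose (exchanging $D_1$ and $D_2$ if necessary) a point $\bfx^*\in\partial D_1$ lying on $\partial\mathcal{O}$ with $\bfx^*\notin\overline{D_2}$, and set $\bfz_n:=\bfx^*+\frac1n\,\nd(\bfx^*)$, so $\bfz_n\in\mathcal{O}$ and $\bfz_n\to\bfx^*$. Because $\bfx^*$ stays at positive distance from $D_2$, the incident fields $\bbG_e(\cdot,\bfz_n)\bfp$ converge in $C^\infty$ on a neighbourhood of $\overline{D_2}$, so by well--posedness (Theorem~\ref{fwd:vari_well-posedness}) and interior regularity $\bfu_2^s(\cdot;\bfz_n,\bfp)$ stays bounded on a fixed ball $B_\delta(\bfx^*)$ with $\overline{B_\delta(\bfx^*)}\cap\overline{D_2}=\emptyset$; by the first reduction, $\bfu_1^s(\cdot;\bfz_n,\bfp)$ is then likewise bounded on the exterior side $\mathcal{O}\cap B_\delta(\bfx^*)$. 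On the other hand, the total field $\bfu_1(\cdot;\bfz_n,\bfp)=\bbG_e(\cdot,\bfz_n)\bfp+\bfu_1^s(\cdot;\bfz_n,\bfp)$ solves $\cl\cl\bfu_1-k^2\varepsilon_1\bfu_1=\bfO$ inside $D_1$ (with $\varepsilon_1\ne1$ there) together with the transmission conditions on $\partial D_1$, while $\bbG_e(\cdot,\bfz_n)$ carries a $|\cdot-\bfz_n|^{-3}$ singularity at $\bfz_n\to\bfx^*\in\partial D_1$ (Lemma~\ref{inv:green-decompose}); testing the interior equation in $D_1\cap B_\delta(\bfx^*)$ against a suitable solution and comparing with the exterior, where the scattered field is uniformly bounded, then produces the standard contradiction of the Isakov/Kirsch--Kress uniqueness proof. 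Hence $D_1=D_2$.

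\emph{Main obstacle.} The steps leading to the first reduction are essentially soft (blocked--waveguide uniqueness, reciprocity, unique continuation), and this is precisely where the near--field, same--side geometry is used: only through reciprocity does data on the single section $\Sigma_r$ control sources placed everywhere in $\mathcal{O}$, including near the side of $D$ facing away from $\Sigma_r$---which is exactly what fails for far--field data (cf. the Introduction). The real difficulty is the singular--source step: making the blow--up rigorous requires (i) sharp estimates on the growth of the waveguide dyadic Green's function and of its traces on $\partial D_1$ as the source tends to a boundary point---a stronger singularity than in the acoustic case, handled via the decomposition $\bbG_e=\bbG_0+\bbJ$; (ii) an interior--regularity and continuous--dependence estimate for the waveguide transmission problem, uniform as the source approaches $\partial D_1$; and (iii) a non--degeneracy of $\varepsilon-1$ near $\partial D$ (automatic when $\tilde\sigma\ge\sigma_0>0$ in $D$, otherwise an extra hypothesis). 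One should also record the topological facts that $\mathcal{O}$ is connected and that a boundary point $\bfx^*$ with the stated properties exists, both following from $D_1,D_2$ being bounded Lipschitz domains away from $\Gamma$.
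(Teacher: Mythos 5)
Your proposal follows essentially the same route as the paper's proof: blocked-waveguide uniqueness (Lemma~\ref{fwd:half_pipe:unbdd}) plus unique continuation to equate the scattered fields outside the scatterers, reciprocity (Lemma~\ref{inv:uniqueness:reciprocity}) to move the sources off $\Sigma_s$, and then a singular-source sequence approaching a point of $\partial D_1\setminus\overline{D_2}$ to reach a contradiction, exactly as in the paper's adaptation of the classical Colton--Kress argument. The extra details you flag in the blow-up step are treated just as briefly in the paper, so there is no substantive divergence.
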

\begin{proof}
The following proof is a modification of that of \cite[Theorem 5.6]{ColtonKress-IP}. 
Let us suppose that $\bfw(\cdot;\bfx_0,\bfp) := \bfu_1^s(\cdot;\bfx_0,\bfp) - \bfu_2^s(\cdot;\bfx_0,\bfp)$ has null tangential component $\nc\times\bfw (\cdot;\bfx_0,\bfp) = \bfO$ on $\Sigma_r$. Then, by Lemma~\ref{fwd:half_pipe:unbdd}, $\bfw (\cdot;\bfx_0,\bfp)=\bfO$ in $W_{(-\infty,r)}$; and, by the unique continuation principle (see \cite[Theorem 4.13 and Remark 4.14]{MonkBook}), also $\bfw (\cdot;\bfx_0,\bfp) = \bfO$ in $\Omega= W\backslash\overline{(D_1\cup D_2)}$. Applying the reciprocity relation (see Lemma~\ref{inv:uniqueness:reciprocity} above), from  $\bfu_1^s(\bfy ;\bfx_0,\bfp) = \bfu_2^s(\bfy ;\bfx_0,\bfp)$ for $\bfy\in\Omega$ and $\bfx_0\in\Sigma_s$ we deduce that
$$
\bfu_1^s(\bfx_0;\bfy,\bfp) = \bfu_2^s(\bfx_0;\bfy,\bfp) \quad \forall \bfx_0\in\Sigma_s \, , \; \bfy\in\Omega\, , \;\bfp\in\mathbb{R}^3 \, .
$$
In case $D_1\ne D_2$, without loss of generality we may consider some point $\bfy^*\in\pd D_1$ such that $\bfy^*\not\in\overline{D_2}$ and use it to build the sequence
$$ \bfy_n^* = \bfy^* + \frac1n\,\boldsymbol{\nu}_{D_1}^*  \quad\mbox{for } n = 1,2,\ldots \, ,$$
where $\boldsymbol{\nu}_{D_1}^*$ is the unit outward normal to $\pd D_1$ at $\bfy^*$. Notice that $\bfy_n^*\in\Omega$ for $n$ big enough, so that we have already shown 
$$
\bfu_1^s(\bfx_0;\bfy_n^*,\bfp) = \bfu_2^s(\bfx_0;\bfy_n^*,\bfp) \quad \forall \bfx_0\in\Sigma_s\, ,\; \bfp\in\mathbb{R}^3 \, . 
$$
We can use here again Lemma~\ref{fwd:half_pipe:unbdd} and the unique continuation principle to deduce that $\bfu_1^s(\cdot;\bfy_n^*,\bfp) = \bfu_2^s(\cdot;\bfy_n^*,\bfp)$ in $\Omega$, which leads to a contradiction when $n\to\infty$ because $\bfy_n^*\to\bfy^*\in \partial D_1\setminus\overline{D_2}$ (notice $\bfu_1^i(\cdot;\bfy^*,\bfp)$ is singular in $\overline{D_1}$ and $\bfu_2^i(\cdot;\bfy^*,\bfp)$ is not in $\overline{D_2}$). \end{proof}

\subsection{The Near Field Operator and its Basic Properties}\label{wg:inverse:nearfieldop}
%

In the sequel we consider the sources and receivers placed on the same cross-section. 
As suggested by Theorem \ref{inv:uniqueness}, we could allow for different surfaces for the sources and the receivers, but the usual choice for the LSM is to have only one. 

Recall that the {Near Field operator} $N: L_T^2(\Sigma _r)\to L_T^2(\Sigma _r)$ is given by (\ref{inv:nearfieldop}).
%
Note that, in general, given a function $\bfv\in H(\tcl,W_{(-R,R)}\setminus\Sigma_r)$ its tangential trace $\nc\times\bfv$ 
is only in $\widetilde{H}^{-1/2}(\mathrm{div},\Sigma_r)$. Nevertheless, we can define the near field operator from $L_T^2(\Sigma_r)$ into $L_T^2(\Sigma_r)$
, thanks to the following lemma (see \cite[Lemma 3.3.4]{FanPhD}, here restated for a bounded section of the waveguide).
 \begin{lemma}\label{inv:hhdv-to-lt} Given $\bfQ\in\widetilde{H}^{-1/2}(\mathrm{div},\Sigma_{s_1})$, let $\bfU$ denote the solution to the blocked waveguide problem (\ref{fwd:half_pipe:bdd-problem}) posed in $W_{(s_1,R)}$. Then the tangential component $\nc\times\bfU$ 
on any cross-section $\Sigma_{s_2}$ ($s_1<s_2<R$) belongs to $L_T^2(\Sigma_{s_2})$.
\end{lemma}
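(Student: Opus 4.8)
The plan is to exploit the explicit modal representation of the blocked-waveguide solution from Lemma~\ref{fwd:half_pipe:unbdd} together with the exponential decay of the evanescent modes. First I would recall that, by Lemma~\ref{fwd:half_pipe:unbdd} (or rather its bounded-section counterpart, Corollary~\ref{fwd:half_pipe:bdd}), the solution $\bfU$ of (\ref{fwd:half_pipe:bdd-problem}) admits on any cross-section $\Sigma_{s_2}$ with $s_1<s_2<R$ the expansion $\bfU|_{\Sigma_{s_2}} = \sum_{m=1}^{\infty} A_m\,\bfM_m|_{\Sigma_{s_2}} + \sum_{n=1}^{\infty} B_n\,\bfN_n|_{\Sigma_{s_2}}$, with the coefficients $A_m, B_n$ fixed by the datum $\bfQ$ on $\Sigma_{s_1}$ as in the lemma. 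Taking the tangential trace, $\nc\times\bfU|_{\Sigma_{s_2}} = \sum_m A_m\, e^{ih_m s_2}\big(\!\begin{smallmatrix}\vgd u_m\\ 0\end{smallmatrix}\!\big) + \frac1k\sum_n B_n\, ig_n\, e^{ig_n s_2}\big(\!\begin{smallmatrix}\vgd v_n \\ 0 \end{smallmatrix}\!\big)$ — the normal ($\hz$) component of $\bfM_m$ vanishes identically, and crossing with $\nc$ rotates the in-plane gradients. Thus it suffices to show the two series converge in $L^2_T(\Sigma_{s_2})$.

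Next I would set up the quantitative estimate. Since $\{u_m\}$ and $\{v_n\}$ are orthonormal in $L^2(\Sigma)$ and bounded in $H^2(\Sigma)$ (the convexity of $\Sigma$ gives $u_m,v_n\in H^2(\Sigma)$ with $\|\sgd u_m\|_{L^2(\Sigma)}^2=\lambda_m$, $\|\sgd v_n\|_{L^2(\Sigma)}^2=\mu_n$), the $L^2_T(\Sigma_{s_2})$-norm of the trace is comparable to
\[
\sum_{m=1}^{\infty} |A_m|^2\, |e^{ih_m s_2}|^2\, \lambda_m \;+\; \frac1{k^2}\sum_{n=1}^{\infty} |B_n|^2\, |g_n|^2\, |e^{ig_n s_2}|^2\, \mu_n .
\]
The hypothesis $\bfQ\in\widetilde{H}^{-1/2}(\mathrm{div},\Sigma_{s_1})$ gives, via the norm characterization (\ref{norms}) and the identification of coefficients in Lemma~\ref{fwd:half_pipe:unbdd}, that $\sum_m |A_m|^2\,|\lambda_m|^3 + \sum_n |B_n|^2\,|g_n|^2\,|\mu_n|/k^2 <\infty$ (the $|g_n|^2$ factor arising because the datum pairs $B_n$ with $g_n$). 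Comparing term by term, the gradient-of-$v_n$ series is already controlled without using any decay. For the $\bfM_m$-series I would split into the finitely many propagating modes ($h_m\in\mathbb{R}$, for which $|e^{ih_m s_2}|=1$ and there are only finitely many terms) and the evanescent tail ($h_m = i|h_m|$, for which $|e^{ih_m s_2}|^2 = e^{-2|h_m|(s_2-s_1)}$ — here one uses that the expansion in Lemma~\ref{fwd:half_pipe:unbdd} is written with $e^{ih_m(x_3-s_1)}$-type factors relative to the source plane, or equivalently that $s_2>s_1$ so the evanescent modes have strictly decayed). For the tail, $|h_m|\sim\lambda_m\to\infty$, so $e^{-2|h_m|(s_2-s_1)}\cdot\lambda_m \le C(s_2-s_1)\,|\lambda_m|^{-3}$ for $m$ large, and hence $\sum_m |A_m|^2 e^{-2|h_m|(s_2-s_1)}\lambda_m \le C\sum_m |A_m|^2|\lambda_m|^{-3} + (\text{finite part}) \le C'\|\bfQ\|^2_{\widetilde{H}^{-1/2}(\mathrm{div},\Sigma_{s_1})}$, which is finite. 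The same exponential gain handles $|g_n|^2|e^{ig_n s_2}|^2\mu_n$ even more comfortably. This proves $\nc\times\bfU|_{\Sigma_{s_2}}\in L^2_T(\Sigma_{s_2})$.

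The main obstacle I anticipate is bookkeeping rather than a genuine difficulty: making sure the phase/decay factors are written relative to the correct reference plane (so that the evanescent modes genuinely contribute $e^{-|h_m|(s_2-s_1)}$ with $s_2-s_1>0$, not a possibly-growing exponential), and matching the precise power of $\lambda_m$, $\mu_n$, $g_n$ appearing in the coefficient identification of Lemma~\ref{fwd:half_pipe:unbdd} against the weights in (\ref{norms}) so that the comparison $e^{-2|h_m|(s_2-s_1)}\lambda_m \lesssim |\lambda_m|^{-3}$ is applied to the right exponents. One should also note that $s_2<R$ (equivalently $s_2$ stays in a compact subinterval) is what keeps the decay uniform — as $s_2\to\infty$ one would still be fine, but the statement only claims membership for each fixed $s_2$, so no uniformity issue arises. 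Everything else — the $H^2$-regularity of eigenfunctions, the vanishing of the $\hz$-component of $\bfM_m$, the equivalence of the modal and $L^2$ norms of the traces — is quoted directly from the material already assembled in Section~\ref{subsec:forward:modal}.
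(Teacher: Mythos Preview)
Your approach is correct and is precisely the natural one: use the modal expansion of Lemma~\ref{fwd:half_pipe:unbdd}, split into the finitely many propagating modes and the evanescent tail, and observe that the factor $e^{-|h_m|(s_2-s_1)}$ (resp.\ $e^{-|g_n|(s_2-s_1)}$) with $s_2-s_1>0$ dominates any polynomial weight in $\lambda_m$ (resp.\ $\mu_n$) needed to pass from the $\widetilde H^{-1/2}(\mathrm{div})$ weights to the $L^2_T$ weights. The paper does not actually supply a proof of this lemma --- it simply cites \cite[Lemma~3.3.4]{FanPhD} --- but the argument there follows exactly this line, so there is no meaningful difference in route.

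Two minor bookkeeping corrections (which you yourself flag as the likely trouble spot): under the paper's convention the Neumann eigenvalue of $-\Delta_\Sigma$ is $\lambda_m^2$, not $\lambda_m$, so $\|\sgd u_m\|_{L^2(\Sigma)}^2=\lambda_m^2$ and the $L^2_T$ weight on the $\bfM_m$-coefficient is $\lambda_m^2$ rather than $\lambda_m$; and in your displayed tangential trace the $\bfM_m$-part should carry $\sgd u_m$ rather than $\vgd u_m$ (compare the expression for $\bfQ$ in Lemma~\ref{fwd:half_pipe:unbdd}). Neither affects the argument, since the exponential decay swallows any fixed polynomial correction.
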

More precisely, 
 we can rewrite the near field operator by means of two auxiliary operators: On the one hand, we define
$$
\mathcal{H}: \bfg\in L^2_T(\Sigma _r) \mapsto \bfw^i_{\bfg}|_D\in H_{inc}(D) \, ,
$$
where $\, H_{inc}(D)= \{ \bfw\in L^2(D)^3 ; \, \cl\cl\bfw -k^2\bfw = \bfO \,\,\mbox{in } D\} \, $ and
\begin{equation}\label{inv:wgi}
\bfw^i_{\bfg}(\bfx)=\int_{\Sigma_r} \bfu^i(\bfx;\bfy,\bfg(\bfy))\, dS_{\bfy} \quad\mbox{for a.e. }\bfx\in W_{(-R,R)}\setminus\Sigma_r \,
\end{equation}
is 
the \emph{electric single layer potential} on $\Sigma_r$ with density $\bfg$; this operator is linear and bounded.
On the other hand, we define the \emph{incident-to-measurement} operator
$$
\mathcal{N}: \bfw^i\in H_{inc}(D)\mapsto \nc\times\bfw^s|_{\Sigma_r}\in L^2_T(\Sigma_r) \,
$$
where $\bfw^s$ is the solution of the solution of the forward problem (\ref{fwd-sc:problem_truncated}) for the incident field $ \bfw^i$; notice that, by Proposition \ref{fwd:vari_well-posedness} and Lemma \ref{inv:hhdv-to-lt}, we know that this operator is well-defined, linear and bounded.
Then we have the factorization $N=\mathcal{N}\mathcal{H}$. Indeed, we can understand $\bfw^i_{\bfg}=\mathcal{H}\bfg$ as the superposition of the incident fields due to point sources $\bfy\in\Sigma_r$ with polarizations $\bfg (\bfy)$; by linearity of the forward problem, the corresponding scattered field is $$\bfw^s_{\bfg} (\bfx)=\int_{\Sigma_r} \bfu^s(\bfx;\bfy,\bfg(\bfy))\, dS_{\bfy} \quad\mbox{for a.e. } \bfx\in W_{(-R,R)}$$ and, in particular, its tangential component  on $\Sigma_r$ is just $\nc\times\bfw_{\bfg}^s|_{\Sigma_r}= N\bfg$.

In order to analyze the properties of the near field operator, we recall  the following standard homogeneous \emph{Interior Transmission Problem} (ITP):
\begin{equation}\label{inv:itp}
\begin{array}{rcl}
   \cl\cl\bfU_1 - k^2\bfU_1 = \bfO & \mbox{in} & D\, , \\[1ex]
   \cl\cl\bfU_2 - k^2\varepsilon\bfU_2 = \bfO & \mbox{in} & D\, , \\[1ex]
   \nd\times\bfU_1 = \nd\times\bfU_2 & \mbox{on} & \partial D \, , \\[1ex]
   \nd\times(\cl\bfU_1) = \nd\times(\cl\bfU_2) & \mbox{on} & \partial D \, .
\end{array}
\end{equation}
The values of $k$ for which this problem has a nontrivial solution are known as \emph{interior transmission eigenvalues}. Notice that this interior transmission problem is the same that arises in the analysis of the inverse problem in free-space, and has been analyzed in \cite[Section 4.2]{CakoniColtonMonk-IP}.  In particular the set of real transmission
eigenvalues is countable.

 \begin{lemma}\label{inv:NFO:1-1} 
If $k$ is not an interior transmission eigenvalue with an eigenfunction of the form of a single layer potential $\bfU_1=\mathcal{H}\bfg$ ($\bfg\in L^2_T(\Sigma_r)$), then the near field operator $N: L_T^2(\Sigma _r) \to L_T^2(\Sigma _r)$ is one-to-one.
\end{lemma}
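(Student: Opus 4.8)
The plan is to show that $N\bfg=\bfO$ implies $\bfg=\bfO$, using the factorization $N=\mathcal{N}\mathcal{H}$ together with the injectivity of each factor (under the stated hypothesis on $k$). Suppose $N\bfg=\bfO$ for some $\bfg\in L^2_T(\Sigma_r)$. Setting $\bfw^i_{\bfg}=\mathcal{H}\bfg$ and letting $\bfw^s_{\bfg}$ be the associated scattered field, the hypothesis says $\nc\times\bfw^s_{\bfg}=\bfO$ on $\Sigma_r$. Since $\bfw^s_{\bfg}$ satisfies the homogeneous Maxwell system in $W\setminus\overline{D}$ with the ORC (and $\Sigma_r$ lies below $D$, away from it), Lemma~\ref{fwd:half_pipe:unbdd} forces $\bfw^s_{\bfg}=\bfO$ in $W_{(-\infty,r)}$, and then the unique continuation principle (as cited in the proof of Theorem~\ref{inv:uniqueness}) gives $\bfw^s_{\bfg}=\bfO$ throughout $W\setminus\overline{D}$.

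Next I would transfer this information across $\partial D$ to obtain an interior transmission problem. Let $\bfU_1:=\bfw^i_{\bfg}|_D=\mathcal{H}\bfg$, which lies in $H_{inc}(D)$ and hence solves $\cl\cl\bfU_1-k^2\bfU_1=\bfO$ in $D$; and let $\bfU_2:=\bfw_{\bfg}|_D=(\bfw^i_{\bfg}+\bfw^s_{\bfg})|_D$, the restriction of the total field, which solves $\cl\cl\bfU_2-k^2\varepsilon\bfU_2=\bfO$ in $D$. Because $\bfw^s_{\bfg}$ vanishes in a neighbourhood of $\partial D$ from the exterior side (it vanishes in all of $W\setminus\overline{D}$), its tangential traces $\nc\times\bfw^s_{\bfg}$ and $\nc\times(\cl\bfw^s_{\bfg})$ are zero on $\partial D$; combined with the transmission conditions satisfied by the total field across $\partial D$ (continuity of the tangential components of $\bfE$ and $\cl\bfE=-ik\mu\bfH$, with $\mu=\mu_0$ constant), this yields $\nd\times\bfU_1=\nd\times\bfU_2$ and $\nd\times(\cl\bfU_1)=\nd\times(\cl\bfU_2)$ on $\partial D$. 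Thus $(\bfU_1,\bfU_2)$ solves the homogeneous ITP~(\ref{inv:itp}), with $\bfU_1$ of single-layer-potential form. By hypothesis $k$ is not such a transmission eigenvalue, so $\bfU_1=\bfU_2=\bfO$ in $D$; in particular $\mathcal{H}\bfg=\bfU_1=\bfO$, i.e.\ the single layer potential $\bfw^i_{\bfg}$ vanishes on $D$.

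It remains to conclude $\bfg=\bfO$ from $\bfw^i_{\bfg}|_D=\bfO$. Since $\bfw^i_{\bfg}$ solves $\cl\cl\bfw^i_{\bfg}-k^2\bfw^i_{\bfg}=\bfO$ in $W\setminus\Sigma_r$ (away from the source surface), vanishing on the open set $D$ and unique continuation propagate $\bfw^i_{\bfg}=\bfO$ on the whole connected component of $W\setminus\Sigma_r$ containing $D$, and then on all of $W\setminus\Sigma_r$ (using also that $\bfw^i_{\bfg}$ satisfies the ORC at $\pm\infty$ and Lemma~\ref{fwd:half_pipe:unbdd} to cross over). Hence the jump of $\nc\times(\cl\bfw^i_{\bfg})$ across $\Sigma_r$ is zero; but this jump equals (a nonzero constant times) $\bfg$, by the standard jump relation for the electric single layer potential with density $\bfg$ on $\Sigma_r$. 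Therefore $\bfg=\bfO$, proving $N$ is one-to-one.

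\textbf{Main obstacle.} The delicate point is the last paragraph: making precise the jump relation for the electric single layer potential on the cross-section $\Sigma_r$ in the waveguide and identifying the density $\bfg$ from the tangential trace of $\cl\bfw^i_{\bfg}$. This requires the decomposition $\bbG_e=\bbG_0+\bbJ$ of Lemma~\ref{inv:green-decompose}, so that the singular behaviour across $\Sigma_r$ is that of the free-space dyadic Green's function (for which the jump relations are classical), while the remainder $\bbJ\in H^1_{loc}$ contributes no jump. Care is also needed that $\bfg\in L^2_T(\Sigma_r)$ has only the regularity $\widetilde{H}^{-1/2}(\mathrm{div},\Sigma_r)$ a priori would suggest, but since it is genuinely in $L^2_T$ the trace manipulations and the application of Lemma~\ref{inv:hhdv-to-lt} are justified.
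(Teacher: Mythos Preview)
Your argument is correct and follows exactly the paper's route: factor $N=\mathcal{N}\mathcal{H}$, use Lemma~\ref{fwd:half_pipe:unbdd} together with unique continuation to kill $\bfw^s_{\bfg}$ in $W\setminus\overline{D}$, and then recognise that $(\bfU_1,\bfU_2)=(\bfw^i_{\bfg}|_D,\bfw_{\bfg}|_D)$ solves the homogeneous ITP with $\bfU_1=\mathcal{H}\bfg$, so the hypothesis forces $\bfU_1=\bfU_2=\bfO$. The paper stops there, writing ``and hence $\bfg=0$'' without further justification. Your final paragraph actually supplies that missing step (injectivity of $\mathcal{H}$): propagate $\bfw^i_{\bfg}=\bfO$ from $D$ to the upper half-guide by unique continuation, cross $\Sigma_r$ using the continuity of $\nc\times\bfw^i_{\bfg}$ for the single layer, apply Lemma~\ref{fwd:half_pipe:unbdd} on the lower half-guide, and read off $\bfg$ from the jump of $\nc\times(\cl\bfw^i_{\bfg})$ via Lemma~\ref{inv:green-decompose}. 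So your proof is the paper's proof plus a genuine strengthening. One small refinement: when you ``cross over'' $\Sigma_r$, make explicit that you are using the continuity of $\nc\times\bfw^i_{\bfg}$ (the companion jump relation) before invoking Lemma~\ref{fwd:half_pipe:unbdd} in $W_{(-\infty,r)}$; as written this is only implicit.
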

\begin{proof}
Let us consider some $\bfg\in L_T^2(\Sigma _r) $ such that $N\bfg =\bfO$ on $\Sigma_r$. By the definition of $N$, this means that 
$$
\int_{\Sigma_r}\nc(\bfx)\times\bfu^s(\bfx;\bfy,\bfg(\bfy))\, dS_{\bfy}=\bfO\quad \mbox{for a.e. } \bfx\in\Sigma _r \, .
$$
In terms of $\bfw_{\bfg}^i=\mathcal{H}\bfg$ 
 and its associated scattered field $\bfw_{\bfg}^s$, the property above means that $\nc\times\bfw^s_{\bfg}=\bfO$ on $\Sigma_r$. Then Lemma \ref{fwd:half_pipe:unbdd} (rewritten for 
 {lower} section of the waveguide $W_{(-R,r)}$, that is, with the radiation condition on $\Sigma_{-R}$ and the boundary condition on $\Sigma_r$) guarantees that $\bfw^s_{\bfg}=\bfO$ in $W_{(-R,r)}$ and, by the unique continuation principle, also in $W_R\setminus \overline{D}$. Therefore, $\bfU_1=\bfw^i_{\bfg}|_D$ and $\bfU_2=(\bfw^i_{\bfg}+\bfw^s_{\bfg})|_D$ solves the ITP with $\bfU_1=\mathcal{H}\bfg$. {By the assumption that $k$ is not a transmission eigenvalue, we have  $\bfU_1=\bfU_2=0$ and hence $\bfg=0$.} This completes the proof. \end{proof}

\begin{lemma}\label{inv:NFO:denserange} 
Under the hypothesis of Lemma \ref{inv:NFO:1-1} on the wavenumber $k$, the range of the near field operator $N: L_T^2(\Sigma _r) \to L_T^2(\Sigma _r)$ is dense in $L_T^2(\Sigma _r)$.
\end{lemma}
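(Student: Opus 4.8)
The plan is to derive density of the range from the injectivity of $N$ already proved in Lemma~\ref{inv:NFO:1-1}, by showing that the orthogonal complement of $\mathrm{Range}(N)$ in $L_T^2(\Sigma_r)$ is trivial. So suppose $\bfh\in L_T^2(\Sigma_r)$ satisfies $\la N\bfg,\bfh\ra_{\Sigma_r}=0$ for every $\bfg\in L_T^2(\Sigma_r)$, and introduce the auxiliary tangential density $\boldsymbol{\varphi}:=\nc\times\overline{\bfh}$. Note that $\boldsymbol{\varphi}$ again belongs to $L_T^2(\Sigma_r)$: on $\Sigma_r$ the map $\bfv\mapsto\nc\times\bfv$ is a pointwise rotation by a right angle of the (common) tangent plane onto itself, hence an isometric isomorphism of $L_T^2(\Sigma_r)$; in particular it is injective, a fact used again at the end.

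First I would unwind the orthogonality relation using the elementary identity $(\nc\times\bfv)\cdot\overline{\bfh}=-\,\bfv\cdot(\nc\times\overline{\bfh})$ together with the definition (\ref{inv:nearfieldop}) of $N$, to obtain
\[
0=\la N\bfg,\bfh\ra_{\Sigma_r}
=-\int_{\Sigma_r}\!\!\int_{\Sigma_r}\bfu^s(\bfx;\bfy,\bfg(\bfy))\cdot\boldsymbol{\varphi}(\bfx)\,dS_{\bfy}\,dS_{\bfx}
\qquad\forall\,\bfg\in L_T^2(\Sigma_r)\,.
\]
Next I would invoke the reciprocity relation of Lemma~\ref{inv:uniqueness:reciprocity}; although stated there for real polarizations, it extends by linearity of $\bfu^s(\bfx;\bfy,\cdot)$ to complex ones (with the non-conjugated bilinear product), so $\bfu^s(\bfx;\bfy,\bfg(\bfy))\cdot\boldsymbol{\varphi}(\bfx)=\bfu^s(\bfy;\bfx,\boldsymbol{\varphi}(\bfx))\cdot\bfg(\bfy)$. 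Substituting and interchanging the order of integration (legitimate by the mapping properties recorded above for the electric single layer potential and the incident-to-measurement operator) yields
\[
\int_{\Sigma_r}\Big(\int_{\Sigma_r}\bfu^s(\bfy;\bfx,\boldsymbol{\varphi}(\bfx))\,dS_{\bfx}\Big)\cdot\bfg(\bfy)\,dS_{\bfy}=0
\qquad\forall\,\bfg\in L_T^2(\Sigma_r)\,.
\]
The inner integral is exactly $\bfw^s_{\boldsymbol{\varphi}}(\bfy)$, the scattered field associated with the single layer density $\boldsymbol{\varphi}=\mathcal{H}\boldsymbol{\varphi}$, evaluated on $\Sigma_r$; since $\bfg$ ranges over all tangential fields (and $\overline{\bfg}$ is tangential whenever $\bfg$ is), the tangential component of $\bfw^s_{\boldsymbol{\varphi}}$ on $\Sigma_r$ must vanish a.e., i.e.\ $\nc\times\bfw^s_{\boldsymbol{\varphi}}=\bfO$ on $\Sigma_r$, which is precisely $N\boldsymbol{\varphi}=\bfO$.

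To finish, Lemma~\ref{inv:NFO:1-1} applies under the very hypothesis on $k$ assumed here, so $N\boldsymbol{\varphi}=\bfO$ forces $\boldsymbol{\varphi}=\bfO$; thus $\nc\times\overline{\bfh}=\bfO$, and by the injectivity of $\bfv\mapsto\nc\times\bfv$ on $L_T^2(\Sigma_r)$ we conclude $\bfh=\bfO$. Hence $\mathrm{Range}(N)^{\perp}=\{\bfO\}$ and $\mathrm{Range}(N)$ is dense in $L_T^2(\Sigma_r)$. The points that need the most care — and the main obstacle — are the cross-product bookkeeping on $\Sigma_r$ (keeping straight which dot products are bilinear and which are sesquilinear, so that the identity $(\nc\times\bfv)\cdot\overline{\bfh}=-\bfv\cdot(\nc\times\overline{\bfh})$ and the equivalence between ``$\nc\times\bfw^s_{\boldsymbol{\varphi}}=\bfO$'' and ``$N\boldsymbol{\varphi}=\bfO$'' are exact), the extension of the reciprocity relation to complex-valued densities, and the Fubini-type interchange; all three are routine given the results already established, so the real content is the passage, via reciprocity, from ``$\bfh\perp\mathrm{Range}(N)$'' to ``$N(\nc\times\overline{\bfh})=\bfO$'', after which Lemma~\ref{inv:NFO:1-1} closes the argument.
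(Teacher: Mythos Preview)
Your proposal is correct and follows essentially the same approach as the paper: both establish that the orthogonal complement of $\mathrm{Range}(N)$ is trivial (equivalently, that $N^*$ is injective) by using the reciprocity relation of Lemma~\ref{inv:uniqueness:reciprocity} to convert the orthogonality condition into a statement $N\boldsymbol{\psi}=\bfO$ for an appropriate tangential density, and then conclude via Lemma~\ref{inv:NFO:1-1}. The only cosmetic differences are that you explicitly introduce $\boldsymbol{\varphi}=\nc\times\overline{\bfh}$ and invoke Lemma~\ref{inv:NFO:1-1} directly, whereas the paper works with $\overline{\bfg}$ and re-runs the injectivity argument.
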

\begin{proof}
Equivalently, we study the injectivity of the $L^2$-adjoint of the near field operator, which we denote $N^*: L_T^2(\Sigma _r) \to L_T^2(\Sigma _r)$. To this end,  let us consider $\bfg\in L_T^2(\Sigma _r) $ such that
, for all $\bfh\in L_T^2(\Sigma _r)$,
$$
0=\langle N^*\bfg,\bfh\rangle_{\Sigma _r} = \langle \bfg,N\bfh\rangle _{\Sigma _r} = \int_{\Sigma_r} \bfg(\bfx) \cdot  (\nc(\bfx)\times\int_{\Sigma _r} \overline{\bfu^s(\bfx;\bfy,\bfh(\bfy))}\, dS_{\bfy}  ) dS_{\bfx}\, .
$$
Changing (formally) the order of integration, 
$$
0= \int_{\Sigma_r}  ( \int_{\Sigma _r}  \bfg(\bfx) \cdot  (\nc(\bfx)\times\overline{\bfu^s(\bfx;\bfy,\bfh(\bfy))}  ) dS_{\bfx}  ) dS_{\bfy}\, .
$$
Notice that, making use of the definition of $L^2_T(\Sigma_r)$ and the reciprocity relation,
$$
 \bfg(\bfx) \cdot  (\nc(\bfx)\times\overline{\bfu^s(\bfx;\bfy,\bfh(\bfy))} ) =  - \bfg(\bfx) \cdot \overline{\bfu^s(\bfx;\bfy,\bfh(\bfy))} = 
 - \overline{\bfh(\bfy)} \cdot \overline{\bfu^s(\bfy;\bfx,\overline{\bfg(\bfx)})} \, ,
$$
so that
$$
0= \int_{\Sigma_r} \overline{\bfh(\bfy)} \cdot   ( \int_{\Sigma _r} \overline{\bfu^s(\bfy;\bfx,\overline{\bfg(\bfx)})} dS_{\bfx}  ) dS_{\bfy}\, .
$$
Since this holds for  all $\bfh\in L_T^2(\Sigma _r)$, we deduce that 
$$
\nc ( \bfy) \times \int_{\Sigma _r} \bfu^s(\bfy;\bfx,\overline{\bfg (\bfx)}) \, dS_{\bfx } = \bfO \quad\mbox{for a.e. } \bfy 
\in \Sigma _r \, .
$$
Reasoning as for Lemma \ref{inv:NFO:1-1}, we conclude that $\bfg=\bfO$ on $\Sigma _r$.\end{proof}


In order to study the compactness of the near field operator, we consider the following volume integral representation of the scattered field associated to any incident field $\bfw^i\in H_{inc}(D)$:
\begin{equation}\label{ip:volintrep}
\bfw^s (\bfx) = -\int_D (1-\varepsilon (\bfy)) \bbG_e (\bfx,\bfy) (\bfw^i(\bfy)+\bfw^s(\bfy)) \, d\bfy \quad\mbox{for }\bfx\in W\, .
\end{equation}
Notice that Lemma \ref{inv:green-decompose} guarantees that $\bbG_e (\bfx,\bfy)$ is smooth for $\bfx\in\Sigma _r$ and $\bfy\in D$, from which it follows 
%
that $\mathcal{N}:H_{inc}(D)\to L^2_T(\Sigma_r)$ is compact; thus, the following result is straightforward by the factorization $N=\mathcal{N}\mathcal{H}:L^2_T(\Sigma _r)\to L^2_T(\Sigma_r)$.

 \begin{lemma}\label{inv:NFO:compact} 
The near field operator $N: L_T^2(\Sigma _r) \to L_T^2(\Sigma _r)$ is compact.
\end{lemma}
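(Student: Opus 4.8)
The plan is to use the factorization $N=\mathcal{N}\mathcal{H}$ recalled above: since $\mathcal{H}:L^2_T(\Sigma_r)\to H_{inc}(D)$ is bounded and the composition of a bounded operator with a compact one is compact, it suffices to show that the incident-to-measurement operator $\mathcal{N}:H_{inc}(D)\to L^2_T(\Sigma_r)$ is compact. For this I would use the volume integral representation (\ref{ip:volintrep}) of the scattered field together with the smoothness of the Green's dyadic off its singularity provided by Lemma~\ref{inv:green-decompose}.

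First I would record an a priori bound. Given $\bfw^i\in H_{inc}(D)$, its associated scattered field $\bfw^s$ solves (\ref{fwd-sc:problem_truncated}) with right-hand side $k^2(\varepsilon-1)\bfw^i$, which is supported in $D$ and bounded in $L^2(D)^3$ by $C\|\bfw^i\|_{L^2(D)^3}$; hence, by the well-posedness of the forward problem (Theorem~\ref{fwd:vari_well-posedness}) and continuous dependence on the data, $\|\bfw^s\|_{L^2(D)^3}\le C\|\bfw^i\|_{L^2(D)^3}$. Since $\varepsilon$ is bounded, the total field $\bfw:=\bfw^i+\bfw^s$ inside the scatterer then satisfies $\|(1-\varepsilon)\bfw\|_{L^2(D)^3}\le C'\|\bfw^i\|_{L^2(D)^3}$, i.e. the map $\mathcal{T}:\bfw^i\mapsto(1-\varepsilon)\bfw$ is bounded from $H_{inc}(D)$ into $L^2(D)^3$.

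Next I would factor $\mathcal{N}$ through $\mathcal{T}$ and an integral operator with smooth kernel. Evaluating (\ref{ip:volintrep}) at $\bfx\in\Sigma_r$ and taking the tangential trace yields
\[
\mathcal{N}\bfw^i(\bfx)\,=\,\nc(\bfx)\times\bfw^s(\bfx)\,=\,-\int_D (1-\varepsilon(\bfy))\,\nc(\bfx)\times\big(\bbG_e(\bfx,\bfy)\bfw(\bfy)\big)\,d\bfy\,,\qquad\bfx\in\Sigma_r\,,
\]
so that $\mathcal{N}=\mathcal{K}\,\mathcal{T}$, where $\mathcal{K}:L^2(D)^3\to L^2_T(\Sigma_r)$ is the integral operator with kernel $-\nc(\bfx)\times\bbG_e(\bfx,\bfy)$. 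Because $\Sigma_r\cap\overline D=\emptyset$, the singular term $\bbG_0$ in the splitting $\bbG_e=\bbG_0+\bbJ$ of Lemma~\ref{inv:green-decompose} is smooth on $\overline{\Sigma_r}\times\overline D$ (two disjoint compact sets), while the remainder $\bbJ$, solving a homogeneous Maxwell system in the $\bfx$ variable on a neighbourhood of $\Sigma_r$, is regular there by elliptic regularity; in particular the kernel is continuous on $\overline{\Sigma_r}\times\overline D$, so $\mathcal{K}$ is a Hilbert--Schmidt (hence compact) operator. Consequently $\mathcal{N}=\mathcal{K}\,\mathcal{T}$ is compact, and therefore so is $N=\mathcal{N}\mathcal{H}$.

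The only step that is not entirely routine is the regularity of the waveguide Green's dyadic on $\overline{\Sigma_r}\times\overline D$ with enough uniformity in the source point $\bfy\in D$ to make $\mathcal{K}$ compact: here one combines the explicit smoothness of $\bbG_0$ away from the diagonal with an interior/boundary elliptic-regularity estimate for the remainder $\bbJ$ near the fixed cross-section $\Sigma_r$, which lies at positive distance from $D$. Everything else --- the a priori bound, the factorization $\mathcal{N}=\mathcal{K}\,\mathcal{T}$, and the compactness of an integral operator with a square-integrable kernel --- is standard.
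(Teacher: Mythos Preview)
Your proof is correct and follows essentially the same route as the paper: both use the factorization $N=\mathcal{N}\mathcal{H}$ together with the volume integral representation (\ref{ip:volintrep}) and the regularity of $\bbG_e(\bfx,\bfy)$ for $\bfx\in\Sigma_r$, $\bfy\in D$ (from Lemma~\ref{inv:green-decompose}) to conclude that $\mathcal{N}$ is compact. Your version is simply more explicit, spelling out the intermediate factorization $\mathcal{N}=\mathcal{K}\,\mathcal{T}$ and the Hilbert--Schmidt property of $\mathcal{K}$; the only minor overstatement is claiming continuity of the kernel up to $\partial\Sigma_r$ (Lemma~\ref{inv:green-decompose} gives $\bbJ\in H^1_{\mathrm{loc}}$, not continuity), but square-integrability of the kernel on $\Sigma_r\times D$ is all you need and that follows immediately.
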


\subsection{Justification of the Linear Sampling Method}\label{wg:inverse:lsm}

We now give details {of} the  Linear Sampling Method (LSM) outlined in the introduction. We assume that the wavenumber $k$ is not a transmission eigenvalue or an exceptional frequency for the forward problem. More precisely, let us consider a sampling point $\bfy$ in the section of the waveguide $ W_{(r,R)}$, away from its boundary $\Gamma_{(r,R)}$ and in the vicinity of $D$. In order to study if this point $\bfy$ belongs to $D$, we 
fix an artificial polarization $\bfp\in\mathbb{R}^3\setminus\{\boldsymbol{0}\}$ 
 and seek a function $\bfg_{\bfy}\in L^2_T(\Sigma_r)$ that solves  the so-called Near Field equation (\ref{inv:lsm:nfe}).
We then claim that  $\bfy\in D$ if $\Vert\bfg_{\bfy}\Vert_{L^2_T(\Sigma _r)}$ is small.
To provide a justification of this approach, we first characterize the points in $D$ in terms of the range of $\mathcal{N}$.
 \begin{lemma}\label{inv:lsm:inout} 
The tangential trace $\nc\times\bbG_e(\cdot,\bfy)\bfp|_{\Sigma_r}$ is in the range of $\mathcal{N}$ if, and only if, $\bfy\in D$.
Furthermore, when $\{\bfy_n\}_n\subset D$ approaches to a point on the boundary of $D$, 
  the unique solutions of the associated near field equations $\mathcal{N}\bfU^i_{\bfy_n}=\nc\times \bfu^i(\cdot;\bfy_n,\bfp)
|_{\Sigma_r}$
blow up  in the  $H(\mathbf{curl},D)$-norm as $n\to\infty$. 
\end{lemma}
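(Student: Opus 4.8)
The plan is to follow the classical LSM argument (as in \cite[Theorem 5.6]{ColtonKress-IP} or \cite[Section 4.2]{CakoniColtonMonk-IP}) adapted to the waveguide setting, using the factorization $N=\mathcal{N}\mathcal{H}$ together with the forward solvability and unique continuation results already established.

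For the \emph{if} direction, suppose $\bfy\in D$. Then $\bbG_e(\cdot,\bfy)\bfp$ solves $\cl\cl\bfU-k^2\bfU=\bfO$ in $W\setminus\overline{D}$, satisfies the PEC condition on $\Gamma$, and obeys the ORC; so by the Representation Formula (Lemma~\ref{inv:uniqueness:rep_fml}) it is the field radiated by the surface data on $\pd D$. I would construct an incident field $\bfw^i\in H_{inc}(D)$ whose associated scattered field $\bfw^s$ (the forward solution of (\ref{fwd-sc:problem_truncated})) satisfies $\bfw^s=\bbG_e(\cdot,\bfy)\bfp$ in $W_R\setminus\overline{D}$. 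This is where the homogeneous ITP (\ref{inv:itp}) enters: the pair $(\bfU_1,\bfU_2)$ solving the \emph{inhomogeneous} interior transmission problem with right-hand sides $\nd\times\bbG_e(\cdot,\bfy)\bfp$ and $\nd\times(\cl\bbG_e(\cdot,\bfy)\bfp)$ on $\pd D$ — solvable precisely because $k$ is not a transmission eigenvalue — yields $\bfw^i=\bfU_1$ and the matching $\bfw^s$. Then $\mathcal{N}\bfw^i=\nc\times\bfw^s|_{\Sigma_r}=\nc\times\bbG_e(\cdot,\bfy)\bfp|_{\Sigma_r}$, so the trace lies in the range of $\mathcal{N}$. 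Conversely, if $\bfy\notin D$ and $\nc\times\bbG_e(\cdot,\bfy)\bfp|_{\Sigma_r}=\mathcal{N}\bfw^i$ for some $\bfw^i\in H_{inc}(D)$ with forward scattered field $\bfw^s$, then $\nc\times(\bfw^s-\bbG_e(\cdot,\bfy)\bfp)=\bfO$ on $\Sigma_r$; by Lemma~\ref{fwd:half_pipe:unbdd} applied to the lower section $W_{(-R,r)}$ this difference vanishes in $W_{(-R,r)}$, and by unique continuation it vanishes throughout $W\setminus(\overline{D}\cup\{\bfy\})$. This forces $\bfw^s$ to inherit the singularity of $\bbG_e(\cdot,\bfy)\bfp$ at $\bfy$, contradicting $\bfw^s\in H(\tcl,W_R\setminus\overline{D})$ near $\bfy$ (recall $\bfy\notin\overline{D}$, so $\bfw^s$ is a regular solution of Maxwell's equations in a neighborhood of $\bfy$). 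Hence $\bfy\in D$.

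For the blow-up statement, I would argue by contradiction: suppose $\{\bfy_n\}\subset D$ with $\bfy_n\to\bfy^*\in\pd D$, let $\bfU^i_{\bfy_n}\in H_{inc}(D)$ be the unique solution of $\mathcal{N}\bfU^i_{\bfy_n}=\nc\times\bfu^i(\cdot;\bfy_n,\bfp)|_{\Sigma_r}$, and assume $\|\bfU^i_{\bfy_n}\|_{H(\tcl,D)}$ stays bounded along a subsequence. Uniqueness of $\bfU^i_{\bfy_n}$ follows from injectivity of $\mathcal{N}$, which in turn comes from the transmission-eigenvalue assumption plus Lemma~\ref{fwd:half_pipe:unbdd} and unique continuation (as in the proof of Lemma~\ref{inv:NFO:1-1}). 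A bounded subsequence has a weakly convergent limit $\bfU^i_*\in H_{inc}(D)$ (the space $H_{inc}(D)$ is closed under weak $L^2$-limits of curl-curl-bounded sequences, or one works in $H(\tcl,D)$ and uses that $\mathcal{N}$ factors continuously through it). By continuity/weak-continuity of $\mathcal{N}$ and the fact that $\nc\times\bfu^i(\cdot;\bfy_n,\bfp)|_{\Sigma_r}\to\nc\times\bfu^i(\cdot;\bfy^*,\bfp)|_{\Sigma_r}$ in $L^2_T(\Sigma_r)$ (since $\bfy^*\in\pd D$ keeps $\bfy_n$ and $\bfy^*$ away from $\Sigma_r$, so $\bbG_e(\bfx,\bfy_n)\to\bbG_e(\bfx,\bfy^*)$ smoothly for $\bfx\in\Sigma_r$ by Lemma~\ref{inv:green-decompose}), we get $\mathcal{N}\bfU^i_*=\nc\times\bfu^i(\cdot;\bfy^*,\bfp)|_{\Sigma_r}$. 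This would place $\nc\times\bbG_e(\cdot,\bfy^*)\bfp|_{\Sigma_r}$ in the range of $\mathcal{N}$, hence by the first part $\bfy^*\in D$ — contradicting $\bfy^*\in\pd D$ (using that $D$ is open). Therefore $\|\bfU^i_{\bfy_n}\|_{H(\tcl,D)}\to\infty$.

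\textbf{Main obstacle.} The delicate point is the \emph{if} direction: showing that the interior transmission data generated by $\bbG_e(\cdot,\bfy)\bfp|_{\pd D}$ can actually be realized by a genuine incident field $\bfw^i\in H_{inc}(D)$ together with its forward scattered field, i.e. solvability of the appropriate inhomogeneous ITP and the correct gluing of $\bfw^s$ to $\bbG_e(\cdot,\bfy)\bfp$ across $\pd D$ so that the glued field satisfies the exterior Maxwell problem with the ORC in the waveguide. This requires (a) the Fredholm theory for the Maxwell ITP from \cite[Section 4.2]{CakoniColtonMonk-IP} to guarantee unique solvability at non-transmission-eigenvalue $k$, and (b) checking that $H_{inc}(D)$ is rich enough — in particular, although $\bfw^i=\bfU_1$ need not itself be an electric single layer potential $\mathcal{H}\bfg$, the LSM justification only needs the \emph{approximability} of $\bfw^i$ by such potentials, which follows from density of $\mathcal{H}(L^2_T(\Sigma_r))$ in $H_{inc}(D)$ (a single-layer density result of the type the paper advertises). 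The blow-up of $\|\bfg_{\bfy_n}\|_{L^2_T(\Sigma_r)}$ for the actual near field equation then follows by combining this density with the $H(\tcl,D)$-blow-up of $\bfU^i_{\bfy_n}$, since $\mathcal{H}$ is bounded and $\mathcal{H}\bfg_{\bfy_n}$ must approximate $\bfU^i_{\bfy_n}$.
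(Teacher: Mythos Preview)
Your argument for the range characterization (the ``if and only if'' part) is essentially the paper's argument: solve the inhomogeneous ITP to build an incident field whose scattered field equals $\bbG_e(\cdot,\bfy)\bfp$ outside $D$, and for the converse use the blocked-waveguide uniqueness plus unique continuation to force the scattered field to match the singular fundamental solution.

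For the blow-up statement, however, you take a genuinely different route. The paper argues \emph{directly}: it fixes auxiliary domains $\overline{D}\subset D_1\subset\overline{D_1}\subset D_2$, uses the stability estimate for the forward problem in $D_2$ to bound $\|\bfU^s_{\bfy_n}\|_{H(\tcl,D_2)}$ in terms of $\|\bfU^1_{\bfy_n}\|_{H(\tcl,D)}$ plus a boundary term involving $\bfu^i(\cdot;\bfy_n,\bfp)$ on $\partial D_2$, and then observes that the left side blows up (since $\bfU^s_{\bfy_n}=\bfu^i(\cdot;\bfy_n,\bfp)$ on $D_2\setminus\overline{D}$ and the singularity approaches $\partial D$) while the boundary term stays bounded. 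Your approach is the more conceptual weak-compactness contradiction: bounded $\Rightarrow$ weakly convergent subsequence $\Rightarrow$ (by compactness of $\mathcal{N}$) the limit places $\nc\times\bfu^i(\cdot;\bfy^*,\bfp)|_{\Sigma_r}$ in the range of $\mathcal{N}$ with $\bfy^*\in\partial D$, contradicting the first part. Both work; the paper's is more self-contained and quantitative, while yours is cleaner but leans on the first part holding at boundary points $\bfy^*\in\partial D$. Note that your (and the paper's) proof of the ``only if'' direction treats $\bfy\in W\setminus\overline{D}$; for your contradiction to close you must also exclude $\bfy^*\in\partial D$ from the range, which follows by the same singularity argument (the Green's function is not locally in $H(\tcl)$ up to its pole), so just make that case explicit.

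Your ``Main obstacle'' paragraph drifts into the density of $\mathcal{H}(L^2_T(\Sigma_r))$ in $H_{inc}(D)$ and the blow-up of $\|\bfg_{\bfy_n}\|$, which belong to the \emph{next} result (the LSM justification theorem), not to this lemma about $\mathcal{N}$.
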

\begin{proof}
Let us consider $\bfy\in D$.  Assuming that $k$ is not an interior transmission eigenvalue, there exists a unique solution of the following non-homogeneous ITP:
\begin{equation}\label{inv:itpaux}
\begin{array}{rcl}
   \cl\cl\bfU_{\bfy}^1 - k^2\bfU^1_{\bfy} = \bfO & \mbox{in} & D\, , \\[1ex]
   \cl\cl\bfU_{\bfy}^2 - k^2\varepsilon\bfU_{\bfy}^2 = \bfO & \mbox{in} & D\, , \\[1ex]
   \nd\times(\bfU^2_{\bfy}-\bfU^1_{\bfy}) = \nd\times\bfu^i(\cdot;\bfy,\bfp) & \mbox{on} & \partial D \, , \\[1ex]
   \nd\times (\cl (\bfU^2_{\bfy}-\bfU^1_{\bfy}) )= \nd\times(\cl \bfu^i(\cdot;\bfy,\bfp)) & \mbox{on} & \partial D \, .
\end{array}
\end{equation}
Notice that
\begin{equation}\label{nfe:defUsy}
\bfU ^s_{\bfy} = \left\{
\begin{array}{ll}
\bfu^i(\cdot;\bfy,\bfp) & \mbox{in } W_R\setminus\overline{D}\, ,\\[1ex]
\bfU^2_{\bfy}-\bfU^1_{\bfy} \quad & \mbox{in } {D}\, ,
\end{array} \right.
\end{equation}
solves the forward problem (\ref{fwd-sc:problem_truncated}) for the incident field $\bfU_{\bfy}^1\in H_{inc}(D)$. Therefore
$$
\mathcal{N} \bfU_{\bfy}^1 = \nc\times \bfU^s_{\bfy}|_{\Sigma _r} = \nc\times \bfu^i(\cdot;\bfy,\bfp)|_{\Sigma _r} \, ,
$$
and we deduce that $\boldsymbol{\nu}_0\times \bfu^i(\cdot;\bfy,\bfp|_{\Sigma _r})\in \mathcal{N}  (H_{inc}(D))$.

Reciprocally, let us take $\bfy\in W\backslash\overline{D}$ and suppose that there exists $\bfU_{\bfy}^0\in H_{inc}(D) $ such that $\mathcal{N}\bfU_{\bfy}^0 = \nc\times\bfu^i(\cdot;\bfy,\bfp)
|_{\Sigma_r}$. If $\bfU^s_{\bfy}\in {X}_{(-R,R)}$ denotes the associated scattered wave
, then 
$$
 \nc\times\bfU^s_{\bfy}|_{\Sigma _r}  =
\mathcal{N}\bfU_{\bfy}^0 = 
 \nc\times\bfu^i(\cdot;\bfy,\bfp)
|_{\Sigma_r}
 \, .
$$
Therefore, Corollary \ref{fwd:half_pipe:bdd} rewritten on the section $W_{(-R,r)}$ 
guarantees that $\bfu^i(\cdot;\bfy,\bfp)
= \bfU^s_{\bfy}$ in $W_{(-R,r)}$ and, by the unique continuation principle, also in $W_{R}\setminus(\overline{D}\cup \{\bfy\})$. This leads to a contradiction when approaching  $\bfy\in W_{(-R,R)}\setminus\overline{D}$ {as $\bfu^i$ is singular there} and hence $\boldsymbol{\nu}_0\times \bfu^i(\cdot;\bfy,\bfp|_{\Sigma _r})\not\in \mathcal{N}  (H_{inc}(D))$. 

We next study the behavior of the solutions of
$$\mathcal{N}\bfU^i_{\bfy_n}=\nc\times\bfu^i(\cdot;\bfy_n,\bfp) 
|_{\Sigma_r}$$
for a sequence $\{\bfy_n\}_n\subset D$ that approaches to a point on the boundary of $D$. To this end, we recall that we have built these solutions using the incident and scattered fields $\bfU^i_{\bfy_n}=\bfU_{\bfy_n}^1$ and  $\bfU ^s_{\bfy_n} $ which satisfy and  (\ref{inv:itpaux}) and (\ref{nfe:defUsy}), respectively; then
$$
\mathcal{N} \bfU_{\bfy_n}^1 = \nc\times  \bfU^s_{\bfy_n}|_{\Sigma _r} = \nc\times \bfu^i(\cdot;\bfy_n,\bfp)|_{\Sigma _r} \, .
$$
To study the behavior of these functions, let us fix two auxiliary Lipschitz domains ${D}_1$ and ${D}_2$ such that
$$
\overline{D}\subset D_1 \, , \quad \overline{D_1}\subset D_2 \, , \quad \overline{D_2}\subset W_{(-R,R)} \, .
$$
Notice that 
\begin{equation*}
\begin{array}{rcl}
   \cl\cl\bfU^s_{\bfy_n} - k^2\varepsilon\bfU^s_{\bfy_n} = - k^2 (1-\varepsilon) \bfU_{\bfy_n}^1 & \mbox{in} & D_2 \, , \\[1ex]
   \boldsymbol{\nu}_{D_2} \times \bfU^s_{\bfy_n} = \boldsymbol{\nu}_{D_2}\times\bfu^i(\cdot;\bfy_n,\bfp) & \mbox{on} & \partial D_2 \, , \\[1ex]
   \boldsymbol{\nu}_{D_2} \times (\cl \bfU^s_{\bfy_n} )= \boldsymbol{\nu}_{D_2} \times(\cl \bfu^i(\cdot;\bfy_n,\bfp)) & \mbox{on} & \partial D_2 \, ,
\end{array}
\end{equation*}
where $\boldsymbol{\nu}_{D_2} $ stands for the unit normal field on $\partial D_2$ directed outwards. The above is a well-posed forward problem, in particular
$$
\begin{array}{l}
\V \bfU^s_{\bfy_n} \V _{H(\mathbf{curl},D_2)}\, \leq \, C\, \Big( 
 \V \bfU^1_{\bfy_n} \V _{H(\mathbf{curl},D)} 
+ \V \boldsymbol{\nu}_{D_2}\times\bfu^i(\cdot;\bfy_n,\bfp) \V _{H^{-1/2}_T (\partial D_2)} \Big. \\
\hspace*{6cm} \Big. + \V \boldsymbol{\nu}_{D_2}\times (\cl\bfu^i(\cdot;\bfy_n,\bfp)) \V _{H^{-1/2}_T (\partial D_2)}
\Big) ;
\end{array}
$$
hence, by the continuity of the tangential trace and that 
$\cl\cl\bfu^i(\cdot;\bfy_n,\bfp)=k^2\bfu^i(\cdot;\bfy_n,\bfp)$ in $D_2\setminus\overline{D_1}$, 
$$
\V \bfU^s_{\bfy_n} \V _{H(\mathbf{curl},D_2)} \leq C  \left( 
 \V \bfU^1_{\bfy_n} \V _{H(\mathbf{curl},D)} 
+ \V \bfu^i(\cdot;\bfy_n,\bfp) \V _{H(\mathbf{curl}, D_2\setminus\overline{D_1})}
\right) .
$$
Finally notice that, when $\{\bfy_n\}_n\subset D$ approaches  a point on the boundary of $D$, the sequence $\V \bfu^i(\cdot;\bfy_n,\bfp) \V _{H(\mathbf{curl}, D_2\setminus\overline{D_1})}$ remains bounded whereas $\V \bfU^s_{\bfy_n} \V _{H(\mathbf{curl},D_2)} \geq \V \bfu^i(\cdot;\bfy_n,\bfp) \V _{H(\mathbf{curl},D_2\setminus\overline{D})} $ blows up; therefore, we conclude that $ \V \bfU^1_{\bfy_n} \V _{H(\mathbf{curl},D)} $ must also blow up.
\end{proof}

We continue our analysis of the LSM with a result which justifies the usage of single layer potentials in the near field equation. {In this proof, we use the following explicit expression of the electric Green's function in terms of the waveguide modes introduced in Section \ref{subsec:forward:modal}, cf. \cite[Subsection 3.3.1]{FanPhD} and \cite{ChenToTai}:
        \begin{equation}\label{ip:GeModes}
          \bbG_{e}(\bfy,\bfx) \, = \, \displaystyle\sum_{m=1}^{\infty} c_m \bfM_m(\bfy^-){\bfM_m(\bfx)}^T 
					+ \sum_{n=1}^{\infty} d_n \bfN_n(\bfy^-){\bfN_n(\bfx)}^T \quad\mbox{for } y_3<x_3\, .
				\end{equation}
In this expression, the superindex $^-$ represents the reflection of a point with respect to the plane $z_3=0$, that is, $\bfy^-:=(\hat{y},-y_3)$ when $\bfy=(\hat{\bfy},y_3)$. Moreover, the coefficients $c_m,d_n\ (m,n=1,2,\ldots)$ depend on 
the shape of $\Sigma$, and the terms $\bfM_m(\bfy^-)\,{\bfM_m(\bfx)}^T$ and $\bfN_n(\bfy^-)\,{\bfN_n(\bfx)}^T$ denote the $3\times 3$ matrices obtained by column-row multiplication.}

 \begin{lemma}\label{inv:densHinc}
Let us assume that 
{the forward problem for the waveguide with a PEC boundary condition on $\partial D$ is well-posed in $W\setminus\overline{D}$ and for the interior Maxwell problem on $D$ (for $\varepsilon = 1$)}.  {We also assume that all the coefficients in the expansion (\ref{ip:GeModes}) are non zero.} Then, the operator $\mathcal{H}: L^2_T(\Sigma _r) \to H_{inc}(D)$ has dense range.
\end{lemma}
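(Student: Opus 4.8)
The plan is to use a duality argument: the range of $\mathcal{H}$ is dense in $H_{inc}(D)$ if and only if the only element of $H_{inc}(D)$ that is $L^2(D)^3$-orthogonal to the range is zero. So I would fix $\bfv\in H_{inc}(D)$ with $\int_D \bfv\cdot\overline{\bfw^i_{\bfg}|_D}\,d\bfx = 0$ for all $\bfg\in L^2_T(\Sigma_r)$, and aim to show $\bfv=\bfO$. Writing out $\bfw^i_{\bfg}=\mathcal{H}\bfg$ as the single layer potential (\ref{inv:wgi}) with kernel $\bbG_e(\cdot,\bfy)\bfg(\bfy)$ and interchanging the (finite-dimensional, well-behaved) integrations over $D$ and $\Sigma_r$, this orthogonality is equivalent to the statement that the field
\[
\bfP(\bfy) \, := \, \int_D \overline{\bbG_e(\bfx,\bfy)^T\,\bfv(\bfx)}\, d\bfx
\]
has vanishing tangential component on $\Sigma_r$, i.e.\ $\nc\times\bfP = \bfO$ on $\Sigma_r$ (using that $\bfg$ ranges over all tangential fields and that $\bbG_e$ is symmetric in the appropriate dyadic sense, which follows from reciprocity). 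Here $\bfP$ is, up to conjugation, a volume potential with density $\bfv$ supported in $D$, so away from $\overline D$ it solves the homogeneous Maxwell system, satisfies the PEC condition on $\Gamma$, and satisfies the outgoing radiation condition.

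Next I would propagate this zero tangential trace: since $\bfP$ solves Maxwell's equations in $W_{(-R,r)}$, satisfies the PEC boundary condition on $\Gamma_{(-R,r)}$, has $\nc\times\bfP=\bfO$ on $\Sigma_r$, and is outgoing toward $x_3\to -\infty$, the uniqueness part of Lemma~\ref{fwd:half_pipe:unbdd} (rewritten for the lower half-waveguide) gives $\bfP=\bfO$ in $W_{(-R,r)}$, and then the unique continuation principle gives $\bfP=\bfO$ throughout $W\setminus\overline D$. Thus $\bfP$ is a (conjugated) volume potential that vanishes outside $D$; equivalently, its interior trace and curl-trace on $\partial D$ match those of the identically-zero exterior field. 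This is the point where the modal expansion (\ref{ip:GeModes}) enters: writing $\bbG_e(\bfx,\bfy)$ as the series in $\bfM_m(\bfx),\bfN_n(\bfx)$ for $\bfx$ on one side, the condition $\bfP\equiv\bfO$ outside $D$ forces (because the coefficients $c_m,d_n$ are all nonzero, and the $\bfM_m,\bfN_n$ restricted to a cross-section are linearly independent) that all the "modal moments" $\int_D \bfM_m(\bfx)\cdot\overline{\bfv(\bfx)}\,d\bfx$ and $\int_D \bfN_n(\bfx)\cdot\overline{\bfv(\bfx)}\,d\bfx$ vanish. I would use the nonvanishing-coefficient hypothesis precisely here to strip off the coefficients.

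Finally I would turn these vanishing moments into the conclusion $\bfv=\bfO$. The field $\bfP$ now defines an interior field on $D$ whose Cauchy data on $\partial D$ vanish, so $\bfP$ restricted to $D$ solves a (zero-data) interior problem; combined with $\bfv\in H_{inc}(D)$, this should let me test the volume-potential identity against $\bfv$ itself, or rather against members of $H_{inc}(D)$, and use the two well-posedness hypotheses — well-posedness of the PEC exterior problem on $W\setminus\overline D$ and of the interior Maxwell problem on $D$ with $\varepsilon=1$ — to deduce first that $\bfP|_D$ solves a homogeneous problem hence is controlled, and then that $\bfv$ itself must vanish. More concretely: the vanishing of all modal moments means $\bfv$ is orthogonal in $L^2(D)^3$ to every field of the form $\bbG_e(\cdot,\bfy)\bfp$ restricted to $D$ with $\bfy$ exterior; by the first part of Lemma~\ref{inv:lsm:inout} such traces, as $\bfy$ ranges over $W\setminus\overline D$, together with the interior-solution hypotheses, span a subspace of $H_{inc}(D)$ that is itself dense, forcing $\bfv=\bfO$.

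\textbf{Main obstacle.} The routine parts are the Fubini interchange and the propagation of the zero trace via Lemma~\ref{fwd:half_pipe:unbdd} and unique continuation. The delicate step is the last one: extracting $\bfv=\bfO$ from the vanishing modal moments. This is where both well-posedness hypotheses and the nonvanishing of the coefficients $c_m,d_n$ must be used together, and one must be careful that the single layer potentials $\bbG_e(\cdot,\bfy)\bfp$ with exterior source points $\bfy$ really do generate a dense set in $H_{inc}(D)$ — which ultimately is a Runge-type approximation statement whose proof again invokes the interior transmission/interior Maxwell machinery. Getting the functional-analytic bookkeeping right (which space is dual to which, and that no transmission eigenvalue obstructs the argument) is the real content.
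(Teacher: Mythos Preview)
Your duality strategy matches the paper's: show that any $\bfv$ orthogonal in $L^2(D)^3$ to the range of $\mathcal{H}$ vanishes, by passing to a volume potential with zero tangential trace on $\Sigma_r$ and extracting the vanishing of the modal moments $\int_D \bfM_m\cdot\overline{\bfv}$ and $\int_D \bfN_n\cdot\overline{\bfv}$. Two comments, one minor and one substantive.

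The detour through Lemma~\ref{fwd:half_pipe:unbdd} and unique continuation, to force $\bfP\equiv\bfO$ on all of $W\setminus\overline D$, is unnecessary. The paper substitutes the modal series~(\ref{ip:GeModes}) directly into the potential evaluated on $\Sigma_r$; since $\{\sgd u_m\}\cup\{\vgd v_n\}$ is an orthonormal basis of $L^2_T(\Sigma_r)$, the single condition $\nc\times\bfV_{\bfU}=\bfO$ on $\Sigma_r$ already forces every modal moment to vanish (this is precisely where $c_m,d_n\neq 0$ is used). No propagation is needed. Incidentally, with your placement of the conjugate it is $\overline{\bfP}$, not $\bfP$, that is outgoing; this does not matter once you drop the propagation step.

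The real gap is in your final step. Lemma~\ref{inv:lsm:inout} characterizes the range of the incident-to-measurement operator $\mathcal{N}$ and says nothing about density of point-source fields $\bbG_e(\cdot,\bfy)\bfp|_D$ in $H_{inc}(D)$; it is the wrong tool here. The paper instead uses a specific ingredient you do not mention: the \emph{boundary} single-layer operator
\[
\mathcal{S}_{\partial D}:\ \bfh\ \longmapsto\ \nd\times\int_{\partial D}\bbG_e(\cdot,\bfz)\,\bfh(\bfz)\,dS_{\bfz}
\]
is an isomorphism from $H^{-1/2}(\tcl,\partial D)$ onto $H^{-1/2}(\mathrm{div},\partial D)$ (this is where the exterior PEC well-posedness hypothesis enters, via \cite[Lemma~3.3.5]{FanPhD}). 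Combined with the interior Maxwell well-posedness on $D$, this shows that $\mathcal{H}_{\partial D}\bfh=\int_{\partial D}\bbG_e(\cdot,\bfz)\bfh(\bfz)\,dS_{\bfz}$ has dense range in $H_{inc}(D)$; and from the modal form~(\ref{ip:GeModes}) of $\bbG_e$ the paper then concludes that $\mathrm{span}\{\bfM_m,\bfN_n\}$ is dense in $H_{inc}(D)$, which together with~(\ref{ip:Ucoefnull}) gives $\bfv=\bfO$. Your ``Runge-type'' intuition is correct, but the concrete mechanism is the boundary single-layer isomorphism on $\partial D$, not Lemma~\ref{inv:lsm:inout}.
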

\begin{remark}{Using the results in \cite{ChenToTai} pages 108 and  141, we know that the assumption on the
coefficients in the expansion (\ref{ip:GeModes}) is satisfied for a rectangular or circular cross-section waveguides.}
\end{remark}
\begin{proof}
We show that the 
adjoint of $\mathcal{H}: L^2_T(\Sigma _r) \to H_{inc}(D)$ is one-to-one. To this end, we consider $\bfU\in H_{inc}(D)'$ such that $\mathcal{H}^*\bfU=\bfO$ in $L^2_T(\Sigma_r)$, that is, for all  $\bfg\in L^2_T(\Sigma_r) $ it holds
$$
0= \langle \mathcal{H}^*\bfU,\bfg \rangle_{\Sigma _r} = (\bfU,\mathcal{H}\bfg )_{D} 
=
\int_D \bfU (\bfx) \cdot  ( \int_{\Sigma_r} \overline{ \mathbb{G}_e (\bfx,\bfy) \bfg(\bfy)} \, dS_{\bfy}  ) d\bfx
\, ;
$$
using that $\bfU (\bfx) \cdot  ( \overline{ \mathbb{G}_e (\bfx,\bfy) } \overline{\bfg(\bfy)}  ) =  ( \overline{ \mathbb{G}_e (\bfx,\bfy) }^T  \bfU (\bfx)  ) \cdot \overline{\bfg(\bfy)} $ and changing (formally) the order of integration,
$$
\int_{\Sigma_r}  
 ( \int_D \overline{ \mathbb{G}_e (\bfx,\bfy) }^T \bfU (\bfx) \,  d\bfx  ) 
\cdot \overline{\bfg(\bfy)}\, dS_{\bfy} 
\, =\, 0 \, ;
$$
since this holds for all $\bfg\in L^2_T(\Sigma_r)$, what we have is that $\nc\times\bfV_{\bfU}|_{\Sigma_r}=\bfO$ on $\Sigma_r$, 
where 
$$
\bfV_{\bfU} (\bfy ) = \int_D \mathbb{G}_e (\bfx,\bfy) \overline{ \bfU (\bfx) } \,  d\bfx \quad\mbox{for }\bfy\in W\setminus\overline{D}\, . 
$$


{Substituting (\ref{ip:GeModes}) in the definition of $\bfV_{\bfU} $,} we have
$$
\bfV_{\bfU} (\bfy ) =
          \displaystyle\sum_{m=1}^{\infty}\! c_m \,\bfM_m(\bfy^-) \!\int_D\! {\bfM_m(\bfx)}^T  \overline{ \bfU (\bfx) } \,  d\bfx
					\, +\, \sum_{n=1}^{\infty} \! d_n\, \bfN_n(\bfy^-) \!\int_D \!{\bfN_n(\bfx)}^T  \overline{ \bfU (\bfx) } \,  d\bfx\, .
$$
This allows us to rewrite the boundary condition $\nc\times\bfV_{\bfU}|_{\Sigma_r}=\bfO$ on $\Sigma_r$  as 
				\begin{equation}\label{ip:Ucoefnull}
          \int_D \bfM_m(\bfx)^T \overline{\bfU (\bfx)} \, d\bfx  = 0 \, , 
					\quad 
					\int_D \bfN_n(\bfx)^T \overline{\bfU (\bfx) } \, d\bfx = 0 
					\qquad\forall m,n=1,2,\ldots
				\end{equation}
thanks to the definition of $\bfM_m, \bfN_n$ (see Section \ref{subsec:forward:modal}) and that $\{ \sgd u_m\}_{m=1}^{\infty} \cup \{ \vgd v_n\}_{n=1}^{\infty}$ defines an orthonormal basis of $ L^2_T(\Sigma _r)$ (see \cite[Lemma 3.1.2]{FanPhD}); notice that this reasoning also requires the  further assumption $c_m\neq 0$ and $d_n\neq 0$ for all $m,n=1,2,...$ 

In order to finally conclude that $\bfU$ {vanishes, note} that the operator
$$
\mathcal{S}_{\partial D}: \bfh \in H^{-1/2}(\mathbf{curl},\partial D) \mapsto \mathcal{S}_{\partial D} \bfh=\boldsymbol{\nu}_D \times\int_{\partial D} \bbG_e(\cdot,\bfz) \, \bfh(\bfz) \, d\bfz \in H^{-1/2}(\mathrm{div}, \partial D) \, ,
$$
defines an isomorphism, cf. \cite[Lemma 3.3.5]{FanPhD}.  In consequence, by the well-posedness of the interior problem that characterizes the space $H_{inc}(D)$, we know that the linear operator
$$
\mathcal{H}_{\partial D}: \bfh \in H^{-1/2}(\mathbf{curl},\partial D) \mapsto \mathcal{H}_{\partial D} \bfh=\int_{\partial D} \bbG_e(\cdot,\bfz) \, \bfh(\bfz) \, d\bfz \in H_{inc}(D) \,
$$
has dense range; and, in particular, from the expression of the fundamental solution in terms of modes (\ref{ip:GeModes}) it follows that
$$
\mathrm{span}\{ \bfM_m,\bfN_n ; \, m,n=1,2,\ldots \}
$$
is dense in $H_{inc} (D)$. Therefore, (\ref{ip:Ucoefnull}) already guarantees that $\bfU\in H_{inc}(D)'$ cancels.
 \end{proof}

Now, we shall prove the main result for the justification of the LSM under the additional assumption of Lemma \ref{inv:densHinc} on the wavenumber $k$.

\begin{theorem}\label{inv:lsm:justification} 
%
%
Let us fix any polarization $\bfp\in\mathbb{R}^3\setminus\{\bfO\}$. \begin{enumerate}
\item For each $\bfy\in D$ and $\epsilon>0$, there exists $\bfg_{\bfy}^{\epsilon}\in L_T^2(\Sigma_r)$ such that
        \begin{eqnarray}
            \V N\bfg_{\bfy}^{\epsilon} - \nc\times\bfu^i (\cdot;\bfy,\bfp)
\V_{L_T^2(\Sigma_r)} < \epsilon \, , \label{inv:lsm:indicator-norm}
        \end{eqnarray}
        and for which 
the associated scattered fields
				$$\bfw^s_{\bfg_{\bfy}^{\epsilon}}(\bfz) = \int_{\Sigma_r}\bfu^s(\bfz;\bfx,\bfg_{\bfy}^{\epsilon}(\bfx))\,dS_{\bfx}$$
converge to 
 $\bfu^s(\cdot ;\bfy,\bfp)$ 
in $H(\tcl,W_R)$ as $\epsilon arrow 0$; moreover, if a sequence $\{\bfy_n \}_n\subset D$ approaches to some point on $\partial D$, then necessarily these functions blow up:
        $$ \lim_{n}\V\bfg_{\bfy_n}^{\epsilon}\V_{L_T^2(\Sigma_r)} = \infty\, . $$
%
\item If $\bfy\not\in D$, any sequence $\{\bfg_{\bfy}^{\epsilon}\}_{\epsilon>0}\subset L_T^2(\Sigma_r)$ that satisfies (\ref{inv:lsm:indicator-norm}) must also blow up:
        $$ \lim_{\epsilon \to 0}\V\bfg_{\bfy}^{\epsilon}\V_{L_T^2(\Sigma_r)} = \infty\, . $$
\end{enumerate}
\end{theorem}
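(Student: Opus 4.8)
The plan is to follow the standard LSM argument (as in \cite[Theorem 5.21]{ColtonKress-IP} and \cite{ColtonKress-IP}, adapted to the waveguide), combining the three ingredients assembled above: the factorization $N=\mathcal{N}\mathcal{H}$, the density of the range of $\mathcal{H}$ (Lemma~\ref{inv:densHinc}), and the range characterization of $\mathcal{N}$ together with the blow-up estimate (Lemma~\ref{inv:lsm:inout}). The overall structure is: (i) prove the ``in'' case $\bfy\in D$ by first solving an auxiliary equation involving $\mathcal{N}$ exactly, then approximating its right-hand side through $\mathcal{H}$; (ii) prove the ``out'' case $\bfy\notin D$ by contradiction, using that a bounded family $\{\bfg^\epsilon_\bfy\}$ would force $\nc\times\bbG_e(\cdot,\bfy)\bfp|_{\Sigma_r}$ into the closure of the range of $\mathcal{N}$, contradicting Lemma~\ref{inv:lsm:inout}.

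For part 1, fix $\bfy\in D$. By Lemma~\ref{inv:lsm:inout} the equation $\mathcal{N}\bfU^i_\bfy=\nc\times\bfu^i(\cdot;\bfy,\bfp)|_{\Sigma_r}$ has a (unique, since $\mathcal{N}$ is injective by the transmission-eigenvalue assumption) solution $\bfU^i_\bfy\in H_{inc}(D)$, and its associated scattered field is exactly $\bfu^s(\cdot;\bfy,\bfp)$ on $W_R\setminus\overline D$ by the construction in (\ref{nfe:defUsy}). Since $\mathcal{H}$ has dense range in $H_{inc}(D)$ (Lemma~\ref{inv:densHinc}), for each $\delta>0$ pick $\bfg^\delta_\bfy\in L^2_T(\Sigma_r)$ with $\V\mathcal{H}\bfg^\delta_\bfy-\bfU^i_\bfy\V_{H(\mathbf{curl},D)}<\delta$. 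Then, using $N=\mathcal{N}\mathcal{H}$ and boundedness of $\mathcal{N}:H_{inc}(D)\to L^2_T(\Sigma_r)$,
\begin{equation*}
\V N\bfg^\delta_\bfy-\nc\times\bfu^i(\cdot;\bfy,\bfp)\V_{L^2_T(\Sigma_r)}
=\V\mathcal{N}(\mathcal{H}\bfg^\delta_\bfy-\bfU^i_\bfy)\V_{L^2_T(\Sigma_r)}
\le\V\mathcal{N}\V\,\delta\,,
\end{equation*}
so choosing $\delta$ small enough gives (\ref{inv:lsm:indicator-norm}). For the convergence of the scattered fields, the map sending an incident field in $H_{inc}(D)$ to the corresponding scattered field in $H(\tcl,W_R)$ is bounded (well-posedness of the forward problem, Theorem~\ref{fwd:vari_well-posedness}, together with Lemma~\ref{inv:hhdv-to-lt}), so $\bfw^s_{\bfg^\delta_\bfy}\to\bfu^s(\cdot;\bfy,\bfp)$ in $H(\tcl,W_R)$ as $\delta\to0$; rename $\delta$ as a function of $\epsilon$ so that also (\ref{inv:lsm:indicator-norm}) holds, and set $\bfg^\epsilon_\bfy:=\bfg^{\delta(\epsilon)}_\bfy$. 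Finally, for the blow-up as $\bfy_n\to\partial D$: Lemma~\ref{inv:lsm:inout} tells us $\V\bfU^i_{\bfy_n}\V_{H(\mathbf{curl},D)}\to\infty$; since $\V\mathcal{H}\bfg^\epsilon_{\bfy_n}-\bfU^i_{\bfy_n}\V_{H(\mathbf{curl},D)}$ stays controlled (below $\epsilon$ by construction, for the chosen $\delta(\epsilon)$), we get $\V\mathcal{H}\bfg^\epsilon_{\bfy_n}\V_{H(\mathbf{curl},D)}\to\infty$, and boundedness of $\mathcal{H}:L^2_T(\Sigma_r)\to H_{inc}(D)$ forces $\V\bfg^\epsilon_{\bfy_n}\V_{L^2_T(\Sigma_r)}\to\infty$.

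For part 2, fix $\bfy\notin D$ and suppose, for contradiction, that there is a sequence $\epsilon\to0$ along which $\V\bfg^\epsilon_\bfy\V_{L^2_T(\Sigma_r)}$ stays bounded. Then $\mathcal{H}\bfg^\epsilon_\bfy$ is bounded in $H_{inc}(D)$, hence (passing to a subsequence) converges weakly to some $\bfW\in H_{inc}(D)$; applying the bounded, hence weak-to-weak continuous, operator $\mathcal{N}$ and using $N\bfg^\epsilon_\bfy=\mathcal{N}\mathcal{H}\bfg^\epsilon_\bfy\to\nc\times\bfu^i(\cdot;\bfy,\bfp)|_{\Sigma_r}$ strongly by (\ref{inv:lsm:indicator-norm}), we get $\mathcal{N}\bfW=\nc\times\bbG_e(\cdot,\bfy)\bfp|_{\Sigma_r}$. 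But this says $\nc\times\bbG_e(\cdot,\bfy)\bfp|_{\Sigma_r}$ lies in the range of $\mathcal{N}$, contradicting Lemma~\ref{inv:lsm:inout}, which asserts this trace is in the range of $\mathcal{N}$ if and only if $\bfy\in D$. Hence $\V\bfg^\epsilon_\bfy\V_{L^2_T(\Sigma_r)}\to\infty$.

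The main obstacle I expect is not any single inequality but making the chain of approximations in part 1 fully rigorous: the solvability of the auxiliary equation $\mathcal{N}\bfU^i_\bfy=\nc\times\bfu^i(\cdot;\bfy,\bfp)$ and its uniqueness rest squarely on the no-transmission-eigenvalue hypothesis and on Lemma~\ref{inv:lsm:inout}, and one must be careful that the ``blow-up'' clause is inherited correctly through the two-step approximation (first $\bfU^i_\bfy$, then $\mathcal{H}\bfg^\epsilon_\bfy$), which requires choosing $\delta=\delta(\epsilon)$ uniformly rather than pointwise in $\bfy_n$. A secondary subtlety is in part 2, where the passage from strong convergence of $N\bfg^\epsilon_\bfy$ to identifying the weak limit of $\mathcal{H}\bfg^\epsilon_\bfy$ needs $\mathcal{N}$ to be continuous for the weak topologies — true since $\mathcal{N}$ is bounded linear, but worth stating — and one should note that $\bfu^i(\cdot;\bfy,\bfp)=\bbG_e(\cdot,\bfy)\bfp$ is singular at $\bfy\in W_R\setminus\overline D$, which is exactly what forbids it from being the restriction of a scattered field and drives the contradiction.
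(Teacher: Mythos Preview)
Your proposal is correct and follows essentially the same route as the paper: the factorization $N=\mathcal{N}\mathcal{H}$, Lemma~\ref{inv:lsm:inout} for the range characterization and blow-up, and Lemma~\ref{inv:densHinc} for density of $\mathcal{H}$. The paper's own proof is very terse (two short paragraphs), and you have simply spelled out the intermediate steps---the convergence of scattered fields via well-posedness, the inheritance of the blow-up through $\mathcal{H}$, and the weak-limit argument in part~2---that the paper leaves implicit. The only cosmetic difference is that in part~2 the paper extracts a weak limit of $\bfg_{\bfy}^{\epsilon}$ directly in $L_T^2(\Sigma_r)$ and then applies $N=\mathcal{N}\mathcal{H}$ (using compactness of $N$), whereas you extract a weak limit of $\mathcal{H}\bfg_{\bfy}^{\epsilon}$ in $H_{inc}(D)$ and apply $\mathcal{N}$; both reach the same contradiction with Lemma~\ref{inv:lsm:inout}.
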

\begin{proof}
Thanks to the factorization $N=\mathcal{N}\mathcal{H}$, the first statement follows from  the characterization of points $\bfy\in D$ in terms of $ \nc\times\bfu^i(\cdot ;\bfy,\bfp)
\in \mathcal{N}(H_{inc}(D)) $ given in Lemma \ref{inv:lsm:inout} and the density of $\mathcal{H}(L_T^2(\Sigma_r))$ in $H_{inc}(D)$ shown in Lemma \ref{inv:densHinc}.

On the other hand, let us consider a point $\bfy\in W_R\setminus\overline{D}$ and  a bounded sequence $\{\bfg_{\bfy}^{\epsilon}\}_{\epsilon>0}\subset L_T^2(\Sigma_r)$ satisfying (\ref{inv:lsm:indicator-norm}). Then there exists a subsequence of $\{\bfg_{\bfy}^{\epsilon}\}_{\epsilon>0}$ that converges weakly to some $\bfg_{\bfy}$  in $L_T^2(\Sigma_r)$; we arrive to a contradiction by noticing that $ \nc\times\bfu^i(\cdot;\bfy,\bfp)
=N\bfg =\mathcal{NH}\bfg\in\mathcal{N}(H_{inc}(D))$ for the point $\bfy\not\in D$.
\end{proof}


\subsection{Some remarks on the Generalized Linear Sampling Method}\label{glsm}

In this subsection we provide some remarks about the Generalized Linear Sampling Method (GLSM) for the inverse problem under study. More precisely, let us recall that the LSM makes use of an approximate solution of the NFE 
in the sense of (\ref{inv:lsm:indicator-norm}); usually, this is done by means of a Tikhonov regularization so that the following is minimized:
\begin{equation*}
            \V N\bfg_{\bfy}^{\alpha} - \nc\times\bfu^i (\cdot;\bfy,\bfp)
\V_{L_T^2(\Sigma_r)}^2 +  \alpha^2 \,\V \bfg_{\bfy}^{\alpha} \V_{L_T^2( \Sigma _r)} \, ,
\end{equation*}
for $\bfg_{\bfy}^{\alpha}\in L^2_T(\Sigma _r)$. 
{In contrast,} the noise free 
GLSM that we consider here approximately solves the NFE 
 by minimizing
\begin{equation}\label{inv:glsm:nfe}
            \V N\bfg_{\bfy}^{\alpha} - \nc\times\bfu^i (\cdot;\bfy,\bfp)
\V_{L_T^2(\Sigma_r)}^2 +  \alpha^2\, |\langle N \bfg_{\bfy}^{\alpha} ,  \bfg_{\bfy}^{\alpha} \rangle_{L_T^2( \Sigma _r)} | \, .
\end{equation}

To analyze this strategy, we first notice that, for all $\bfv\in L^2(D)^3$ and $\mathbf{f}\in L^2_T(\Sigma_r)$
\begin{equation*}
(\mathcal{H}^*\mathbf{v},\bff)_{\Sigma _r} = \int _D \mathbf{v}\cdot \overline{\mathcal{H}\bff} \, d\bfx 
= \int_{\Sigma _r} \overline{ \bff (\bfy) } \cdot  ( \nc \times\int_D \overline{\bbG_e (\bfy,\bfx)} \bfv (\bfx) \,d\bfx  ) dS_{\bfy} \, ,
\end{equation*}
where we have changed the order of integration and made use of the symmetry of the dyadic function $\bbG_e(\cdot,\cdot)$. The above means that, for all $\bfv\in L^2(D)^3$,
\begin{equation*}
{\mathcal{H}^*\mathbf{v}} = \nc \times \int_D \overline{\bbG_e (\cdot,\bfx) } \bfv (\bfx) \, d\bfx \quad \mbox{on } \Sigma _r \, ;
\end{equation*}
therefore, taking into account the volume integral representation (\ref{ip:volintrep}) and the  definition of the auxiliary operator $\mathcal{N}: \bfw^i\in H_{inc}(D)\mapsto \nc\times\bfw^s|_{\Sigma_r}\in L^2_T(\Sigma_r)$, we deduce that
\begin{equation*}
-\overline{\mathcal{H}^*(k^2\, \overline{(1-\varepsilon)\mathbf{w}})} = \nc \times \bfw^s = \mathcal{N} \bfw^i \quad \mbox{on } \Sigma _r \, ,
\end{equation*}
for each $\bfw^i\in H_{inc}(D)$, where $\bfw^s$ and $\bfw=\bfw^i+\bfw^s$  denote the corresponding scattered and total fields. In particular, using the factorization of the near field operator $N=\mathcal{N}\mathcal{H}$, we have that 
\begin{equation}\label{eq:Nfactization}
N\bfg \, = \, \mathcal{N}\bfw^i_{\bfg} = 
\overline{\mathcal{H}^*(\overline{\mathcal{T}\bfw^i_{\bfg}})} =
\overline{\mathcal{H}^*(\overline{\mathcal{T}\mathcal{H}\bfg})} 
\quad\mbox{for all } \bfg\in L^2_T(\Sigma_r) \, ,
\end{equation}
where we have used a third auxiliary operator, defined by
\begin{equation*}
\mathcal{T}: \mathbf{w}^i \in H_{inc}(D) \to -k^2 \, (1-\varepsilon ) \, \bfw|_D 
\in L^2(D)^3 \, .
\end{equation*}
The key property for the analysis of the GLSM is the coercivity of this operator, see \cite{ComputELectromag-Cetraro}.
In order to study such  coercivity, we notice that, for all $\bfw^i\in H_{inc}(D)$,
\begin{equation}\label{ip:productTwiwi}
(\mathcal{T}\bfw^i,\bfw^i)_D = -k^2 \int_D (1-\varepsilon) \, |\bfw^i|^2 \, d\bfx -k^2 \int_D (1-\varepsilon) \, \bfw^s\cdot\overline{\bfw^i} \, d\bfx \, ;
\end{equation}
the second term on the right-hand side can be analyzed using the equation (\ref{fwd-total:weak_form}) that characterizes the scattered field $\bfE=\bfw^s$ for the test function $\bfv=\bfw^i$:
\begin{equation*}
\begin{array}{l}
\displaystyle\-k^2 \int_D (1-\varepsilon) \, \bfw^s\cdot\overline{\bfw^i} \, d\bfx 
= 
\int_{W_{(-R,R)}}\!\! ( |\cl\bfw^s|^2-k^2\,\overline{\varepsilon} \, |\bfw^s|^2  ) d\bfx
+ \displaystyle\int_{\Sigma_{\pm R}} \!\!\overline{T^{ \pm }_{R} (\nc \times \bfw^s) } \cdot \boldsymbol{\gamma}_T \bfw^s \, dS_{\bfx} 
\, ;
\end{array}
\end{equation*}
moreover, we can write explicitly the imaginary part of last term using the modal expansion (\ref{fwd-dtn-explicit-form}):
\begin{equation*}
\begin{array}{l}
\Im \big(\!\displaystyle\int_{\Sigma_{\pm R}}\!\!\overline{T^{ \pm }_{R} (\nc \times \bfw^s) } \cdot \boldsymbol{\gamma}_T \bfw^s \, dS_{\bfx}\big)  = 
-\displaystyle\sum_{m=1}^{m_0} |\langle\nc\times\bfw^s ,  \left(\!\!\begin{array}{c} \sgd u_m \\ 0 \end{array}\!\!\right)  \rangle _{\Sigma_{ \pm R}}|^2 \,\frac{h_m}
{\lambda_m^2}\\[1ex]
\hspace*{2cm}-\displaystyle\sum_{n=1}^{n_0} |\langle\nc\times\bfw^s ,  \left(\!\!\begin{array}{c} \vgd v_n \\ 0 \end{array}\!\!\right)  \rangle _{\Sigma_{\pm R}}|^2 \,\frac{k^2}{\mu_n^2\,g_n} 
 \, ,
\end{array}
\end{equation*}
where $m_0$ and $n_0$ represent the indices up to which  
$k^2>\lambda_m^2$ and $k^2>\mu_n^2$, respectively; in other words, the imaginary part of this term catches the asymptotics of the traveling waves. Taking this results back to (\ref{ip:productTwiwi}), we deduce that
\begin{equation*}
\begin{array}{l}
\displaystyle
\Im ((\mathcal{T}\bfw^i,\bfw^i)_D) \, = \, k^2\int_D \Im ( \varepsilon ) \, ( |\bfw^i|^2 +\, |\bfw^s|^2 ) \, d\bfx +
\displaystyle\displaystyle\sum_{m=1}^{m_0} |\langle\nc\times\bfw^s ,  \left(\!\!\begin{array}{c} \sgd u_m \\ 0 \end{array}\!\!\right)  \rangle _{\Sigma_{ \pm R}}|^2 \, \frac{h_m}{\lambda_m^2}
\\
\hspace*{4cm}
+
\displaystyle\sum_{n=1}^{n_0} |\langle\nc\times\bfw^s ,  \left(\!\!\begin{array}{c} \vgd v_n \\ 0 \end{array}\!\!\right)  \rangle _{\Sigma_{ \pm R}}|^2 \, \frac{k^2}{\mu_n^2\, g_n} \, 
\end{array} 
\end{equation*}
from where we conclude that $\mathcal{T}$ is coercive whenever the imaginary part of $\varepsilon$ is strictly positive in some subdomain of $D$ with non-zero measure. We are ready to prove the following justification of the GLSM for our problem.

\begin{theorem}
Let us assume 
that the imaginary part of  $\varepsilon$ 
is strictly positive in $D$ (or on a subdomain  with non-zero measure). Then, for any polarization $\bfp\in\mathbb{R}^3\setminus\{\bfO\}$ and $\bfy\in W$, it holds that $\bfy\in D$ if, and only if, any sequence $\{ \bfg^{\alpha}_{\bfy}\}_{\alpha>0}\subset L^2_T(D)$ of minimizers of 
 (\ref{inv:glsm:nfe}) is bounded.
\end{theorem}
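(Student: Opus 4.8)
The plan is to recognize the statement as a special case of the abstract justification of the GLSM (in the spirit of \cite{ComputELectromag-Cetraro}), assembling the pieces already at our disposal. First I would note that the hypothesis that $\Im\varepsilon$ is strictly positive on a subdomain of $D$ makes the forward scattering problem well posed for every real $k$ (Theorem~\ref{fwd:vari_well-posedness}) and rules out real interior transmission eigenvalues (a Green's-identity plus unique-continuation argument applied to (\ref{inv:itp})), so the hypotheses under which Lemmas~\ref{inv:NFO:1-1}, \ref{inv:NFO:denserange}, \ref{inv:NFO:compact} and \ref{inv:lsm:inout} were established are automatically in force; I keep in addition the standing hypotheses of Lemma~\ref{inv:densHinc}. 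The structural facts I would use are: the factorization $N\bfg=\overline{\mathcal{H}^*(\overline{\mathcal{T}\mathcal{H}\bfg})}$ from (\ref{eq:Nfactization}); the coercivity $|(\mathcal{T}\bfw^i,\bfw^i)_D|\ge c_0\V\bfw^i\V_{L^2(D)}^2$ on $H_{inc}(D)$, which is exactly what the computation preceding this theorem gives from the sign of $\Im\varepsilon$; the compactness of $N$ (Lemma~\ref{inv:NFO:compact}) and of the incident-to-measurement operator $\mathcal{N}$; the density of $\mathcal{H}(L^2_T(\Sigma_r))$ in $H_{inc}(D)$ (Lemma~\ref{inv:densHinc}) and of the range of $N$ in $L^2_T(\Sigma_r)$ (Lemma~\ref{inv:NFO:denserange}); and the geometric characterization $\nc\times\bfu^i(\cdot;\bfy,\bfp)|_{\Sigma_r}\in\mathcal{N}(H_{inc}(D))\iff\bfy\in D$ (Lemma~\ref{inv:lsm:inout}).

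The first step would be the two--sided bound $c_1\V\mathcal{H}\bfg\V_{L^2(D)}^2\le|\langle N\bfg,\bfg\rangle_{L^2_T(\Sigma_r)}|\le c_2\V\mathcal{H}\bfg\V_{L^2(D)}^2$ for all $\bfg\in L^2_T(\Sigma_r)$: unwinding the complex conjugations in (\ref{eq:Nfactization}) and using $\langle\mathcal{H}^*\bfv,\bfg\rangle_{\Sigma_r}=(\bfv,\mathcal{H}\bfg)_D$ one rewrites $\langle N\bfg,\bfg\rangle_{\Sigma_r}$ in terms of $(\mathcal{T}\mathcal{H}\bfg,\mathcal{H}\bfg)_D$ (up to contributions controlled by $\V\mathcal{H}\bfg\V_{L^2(D)}^2$), after which the lower bound follows from the coercivity of $\mathcal{T}$ and the upper bound from the boundedness of $\mathcal{T}$ and $\mathcal{H}$. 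As a by-product, the functional $J_\alpha(\bfg):=\V N\bfg-\nc\times\bfu^i(\cdot;\bfy,\bfp)\V_{L^2_T(\Sigma_r)}^2+\alpha^2|\langle N\bfg,\bfg\rangle_{L^2_T(\Sigma_r)}|$ is weakly sequentially continuous because $N$ is compact, which gives existence of minimizers $\bfg^\alpha_\bfy$ (if one worries about escape to infinity of minimizing sequences, adding a vanishing term $\epsilon\,\alpha^2\V\bfg\V^2$ changes nothing below). Finally, by density of the range of $N$, for every $\bfy$ one has $j_\alpha:=\inf_\bfg J_\alpha(\bfg)\to0$ as $\alpha\to0$ (fix $\bfg$ with $\V N\bfg-\nc\times\bfu^i(\cdot;\bfy,\bfp)\V<\eta$, then send $\alpha\to0$).

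Next I would prove that $\bfy\in D$ implies that $\{|\langle N\bfg^\alpha_\bfy,\bfg^\alpha_\bfy\rangle|\}_\alpha$ is bounded, equivalently $\{\V\mathcal{H}\bfg^\alpha_\bfy\V_{L^2(D)}\}_\alpha$ is bounded. By Lemma~\ref{inv:lsm:inout} there is $\bfw^i\in H_{inc}(D)$ with $\mathcal{N}\bfw^i=\nc\times\bfu^i(\cdot;\bfy,\bfp)|_{\Sigma_r}$, and by Lemma~\ref{inv:densHinc} a sequence $\bfg_n\in L^2_T(\Sigma_r)$ with $\mathcal{H}\bfg_n\to\bfw^i$ in $L^2(D)$, so $N\bfg_n=\mathcal{N}\mathcal{H}\bfg_n\to\nc\times\bfu^i(\cdot;\bfy,\bfp)|_{\Sigma_r}$ and $|\langle N\bfg_n,\bfg_n\rangle|\le c_2\V\mathcal{H}\bfg_n\V^2\to c_2\V\bfw^i\V^2$; using $\bfg_n$ as a competitor, $\alpha^2|\langle N\bfg^\alpha_\bfy,\bfg^\alpha_\bfy\rangle|\le J_\alpha(\bfg^\alpha_\bfy)\le J_\alpha(\bfg_n)$, and letting $n\to\infty$ then dividing by $\alpha^2$ gives $|\langle N\bfg^\alpha_\bfy,\bfg^\alpha_\bfy\rangle|\le c_2\V\bfw^i\V^2$ uniformly in $\alpha$. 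For the converse I would argue by contraposition: if $\bfy\notin D$ but some subsequence keeps $|\langle N\bfg^\alpha_\bfy,\bfg^\alpha_\bfy\rangle|$ bounded, then $\{\mathcal{H}\bfg^\alpha_\bfy\}$ is bounded in the closed (hence weakly closed) subspace $H_{inc}(D)$ of $L^2(D)^3$, so along a further subsequence $\mathcal{H}\bfg^\alpha_\bfy\rightharpoonup\bfw^i\in H_{inc}(D)$; since $j_\alpha\to0$ we also have $N\bfg^\alpha_\bfy=\mathcal{N}\mathcal{H}\bfg^\alpha_\bfy\to\nc\times\bfu^i(\cdot;\bfy,\bfp)|_{\Sigma_r}$, and the compactness of $\mathcal{N}$ forces $\nc\times\bfu^i(\cdot;\bfy,\bfp)|_{\Sigma_r}=\mathcal{N}\bfw^i\in\mathcal{N}(H_{inc}(D))$, i.e. $\bfy\in D$ by Lemma~\ref{inv:lsm:inout}, a contradiction; hence $|\langle N\bfg^\alpha_\bfy,\bfg^\alpha_\bfy\rangle|\to\infty$, and since $|\langle N\bfg,\bfg\rangle|\le\V N\V\,\V\bfg\V_{L^2_T(\Sigma_r)}^2$, also $\V\bfg^\alpha_\bfy\V_{L^2_T(\Sigma_r)}\to\infty$.

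I expect the main difficulty to lie in the first step: keeping the complex conjugations and the sesquilinear/bilinear pairings in (\ref{eq:Nfactization}) straight so that $|\langle N\bfg,\bfg\rangle_{\Sigma_r}|$ really is comparable to $\V\mathcal{H}\bfg\V_{L^2(D)}^2$, together with the weak--compactness passage in the converse direction, where one must make sure the weak limit of $\{\mathcal{H}\bfg^\alpha_\bfy\}$ stays in $H_{inc}(D)$ before invoking the compactness of $\mathcal{N}$; the existence of minimizers of the non-coercive functional (\ref{inv:glsm:nfe}) is a minor point, handled by the vanishing extra penalty mentioned above. One should also be precise about the sense of ``bounded'': the argument directly controls $|\langle N\bfg^\alpha_\bfy,\bfg^\alpha_\bfy\rangle|$, equivalently $\V\mathcal{H}\bfg^\alpha_\bfy\V_{L^2(D)}$ (the coercivity norm that serves as the GLSM indicator); since $\mathcal{H}$ is smoothing this is a priori weaker than boundedness of $\bfg^\alpha_\bfy$ itself in $L^2_T(\Sigma_r)$, so either the statement is read with this coercivity norm or one passes to the standard noisy GLSM functional with a vanishing $\V\bfg\V^2$-penalty, for which $\bfg^\alpha_\bfy$ is the controlled quantity.
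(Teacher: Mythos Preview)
Your proposal is correct and in fact does strictly more work than the paper. The paper's own proof is a two-line application of an abstract GLSM result: it lists exactly the ingredients you collected --- the factorization $N=\mathcal{N}\mathcal{H}$ together with~(\ref{eq:Nfactization}), the compactness and dense range of $\mathcal{N}$ (Lemmas~\ref{inv:NFO:compact} and~\ref{inv:NFO:denserange}), the coercivity of $\mathcal{T}$ from the sign of $\Im\varepsilon$, and the range characterization of $\mathcal{N}$ (Lemma~\ref{inv:lsm:inout}) --- and then simply invokes \cite[Chapter~4, Theorem~8]{ComputELectromag-Cetraro} as a black box. You instead unpack that abstract theorem: the two-sided bound $|\langle N\bfg,\bfg\rangle_{\Sigma_r}|\asymp\V\mathcal{H}\bfg\V_{L^2(D)}^2$, the competitor argument for $\bfy\in D$, and the weak-compactness contradiction for $\bfy\notin D$ are precisely the skeleton of its proof, so the two routes coincide at the level of ideas; yours is just self-contained.

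Your closing caveat is well taken and worth keeping: the argument you give (and, in the standard noise-free GLSM, the argument behind the cited abstract theorem) directly controls the penalty $|\langle N\bfg^\alpha_\bfy,\bfg^\alpha_\bfy\rangle|$, equivalently $\V\mathcal{H}\bfg^\alpha_\bfy\V_{L^2(D)}$, rather than $\V\bfg^\alpha_\bfy\V_{L^2_T(\Sigma_r)}$ itself. In the direction $\bfy\notin D\Rightarrow\V\bfg^\alpha_\bfy\V\to\infty$ this is harmless, since blow-up of the penalty forces blow-up of $\V\bfg^\alpha_\bfy\V$; in the direction $\bfy\in D\Rightarrow\{\bfg^\alpha_\bfy\}$ bounded, boundedness of the penalty does not by itself bound $\V\bfg^\alpha_\bfy\V$ because $\mathcal{H}$ is smoothing. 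The paper's statement (with the evident misprint $L^2_T(D)$ for $L^2_T(\Sigma_r)$) glosses over this by deferring to the cited reference; your suggestion to read the conclusion in the coercivity norm, or to pass to the noisy GLSM functional with an added $\alpha$-vanishing $\V\bfg\V^2$ term, is exactly how the literature resolves it.
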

\begin{proof}
On one hand, points $\bfy\in D$ are characterized by the property $ \nc\times\bfu^i(\cdot;\bfy,\bfp)
|_{\Sigma_r}
\in \mathcal{N}(H_{inc}(D)) $ (see Lemma \ref{inv:lsm:inout}). On the other hand, the near field operator can be factorized both as $N=\mathcal{N}\mathcal{H}$ and as in (\ref{eq:Nfactization}); moreover, $\mathcal{N}:H_{inc}(D)\to L^2_T(\Sigma_r)$ has dense range and is compact (see Lemmas \ref{inv:NFO:denserange} and \ref{inv:NFO:compact}, respectively), whereas $\mathcal{T}:H_{inc}(D)\to L^2(D)^3$ is coercive when the imaginary part of $\varepsilon$ is strictly positive (see the reasoning above the statement). Therefore, the result follows by \cite[Chapter 4, Theorem 8]{ComputELectromag-Cetraro}.
\end{proof}

%
\section{Numerical results}\label{Sec-Numresult}

In this section, we shall describe some numerical simulations of the reconstruction of scattering objects in order to investigate the application of the LSM to inverse electromagnetic scattering in a waveguide. Specifically, we use NGSolve~\cite{netgen} to implement a forward scattering code to generate synthetic scattering data to be collected at the receivers located on a cross-section of the waveguide  below the scatterer.  In particular we used quadratic edge finite elements to approximate $\bfE^s$ on a finite section of the waveguide, and terminate this section at both ends using the non-standard Perfectly Matched Layer (PML) proposed in~\cite{Rivas} (with the parameters used there).  This PML is singular and accounts for both traveling and evanescent components of the solution.  The electric field
is extended to the entire waveguide by using a truncated modal expansion (\ref{modal}) to one side of the scatterer.  The expansion coefficients are computed by fitting the finite element solution on a cross section
of the waveguide taken to be an interface in the mesh, and we always use up to order 7 Fourier modes in $x$ and $y$ giving 63 modes for each polarization.  The mesh size suggested to NGSolve is $2\pi/(7 k)$ in the air
and $2\pi/(7 k \sqrt{\epsilon})$ in $D$.

The waveguide is taken to have a square cross-section $\Sigma=(0, 1)\times(0,1)$, and the scatterer $D$ has a constant electric permittivity $\varepsilon=4$. We consider two wavenumbers: $k=20$ and $k=25$.  When $k=20$ we have 38 propagating modes, and when $k=25$ we have 55 such modes (in this case the highest Fourier order for a propagating mode  is 7). For all experiments we use an $8\times 8$ grid of transducers (the same points are used to place the sources and to take measurements) at the tensor product Gauss-Legendre quadrature points in $\Sigma_r$ where we choose $r=-5$.   At each source point we use successively each of the three polarizations parallel to the coordinate axes, and assume knowledge of all three polarizations of the scattered field at the measurement points.  Using the product Gauss-Legendre quadrature scheme to discretize the near field operator results in a $192\times 192$ \emph{near field matrix}.  

In this paper we use a simple spectral cutoff regularization which appears
sufficient for the examples here although the more standard Tikhonov-Morozov scheme~\cite{CakoniColtonMonk-IP} might be preferable in practice.  We choose the spectral cutoff manually as described below. 
In some cases noise is added to the data entry by entry as described in \cite{CakoniColtonMonk-IP}. In particular if $N$ is the matrix representing the near field operator after discretization using Gauss-Legendre quadrature (in our case $N$ is a $192\times 192$ matrix) then, for a given noise parameter $\eta$, we add noise by computing a new matrix $N_{\eta}$ using
\[
(N_{\eta})_{i,j}=N_{1,j}(1+\eta \xi_{i,j})\mbox{ for all }i,j.
\]
Here $\xi_{i,j}$ is drawn from a set of random numbers uniformly distributed in $(-1,1)$ (using the Numpy \verb+random.uniform+ command). 

Having computed a regularized solution of the discrete near field equation for each of three linearly independent auxiliary polarizations (taken to be each of the three standard unit vectors successively) due to a given
source point $\bfz$, we use the reciprocal of the average of the discrete $\ell^2$-norms of the discrete approximation to $\bfg_z$ for each of these three polarizations as the indicator function for identification of the shape of the scatterer.
We shall present isosurface plots for the indicator function as well as detailed contour plots on cross-sections of the 
domain.  The isosurface to draw is chosen by fixing a constant $0<C<1$, and the isosurface  is then given by all
$\mathbf{x}$ such that
\begin{equation}
\psi(\mathbf{x})=C  ( \max _{\mathbf{y}\in Z} \psi(\mathbf{y})-\min_{\mathbf{y}\in Z} \psi(\mathbf{y}) )+\min _{\mathbf{y}\in Z} \psi(\mathbf{y}) \, ,
\label{level}
\end{equation}
where $\psi$ is the indicator function and $Z$ is the set of source points used for the sampling method.
The constant $C$ may have to be modified for different scatterers and noise levels.

\begin{figure}
\begin{center}
\begin{tabular}{cc}
\resizebox{0.3\textwidth}{!}{
\includegraphics{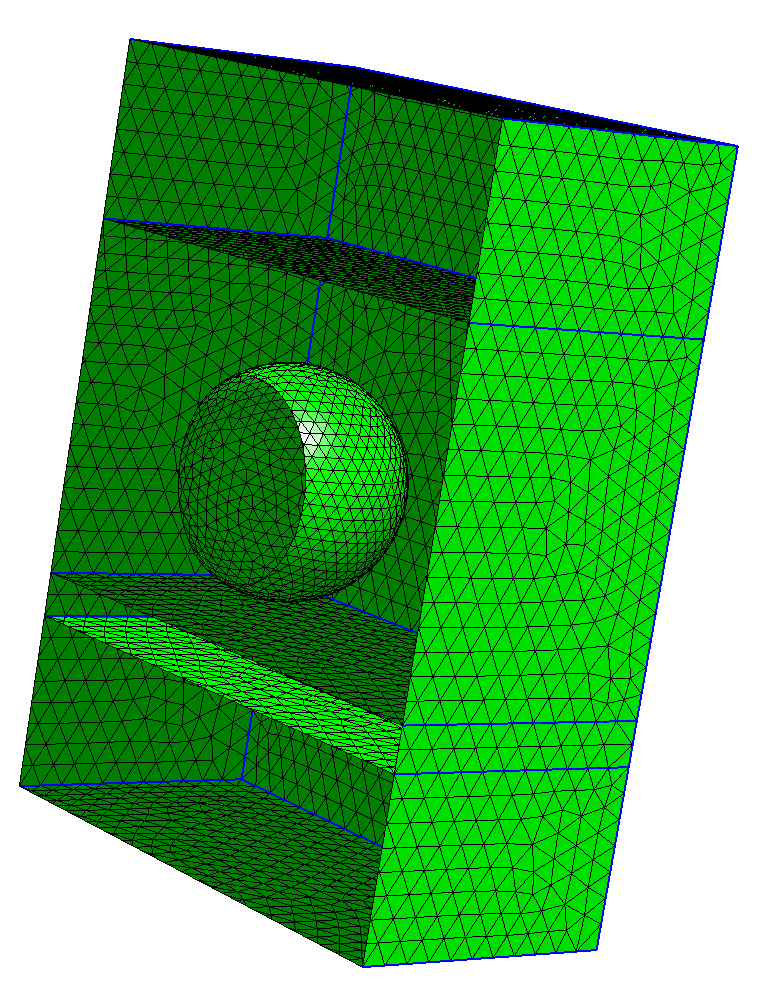}} \hspace{.0125\textwidth} & \hspace{.0125\textwidth} 
\resizebox{0.3\textwidth}{!}{
\includegraphics{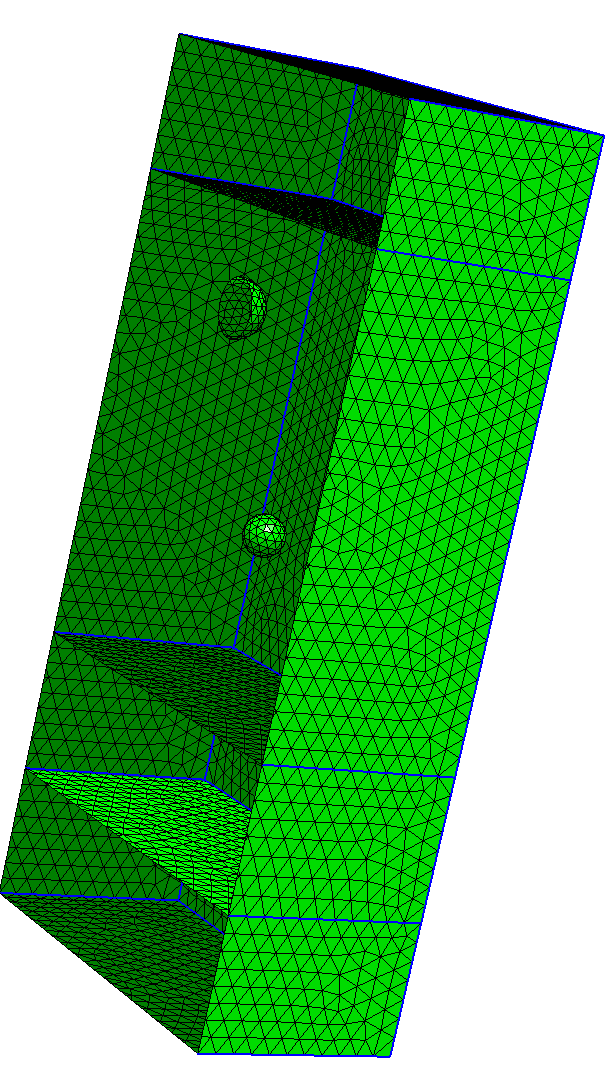}}
\end{tabular}
\end{center}
\caption{Cross-sections of the computational domains used for the forward problems in this study.  Left: the single spherical scatterer. Right: two smaller spheres.  The source points and measurement surface are below the scatterers.  The boxed regions at the top and bottom of the domain are the PML regions (in this case $k=20$ and the PML regions are one wavelength thick).  The surface between the obstacle or obstacles and the lower PML is used to fit a modal expansion for extending the solution outside the computational domain.}
\label{domains}
\end{figure}

We will consider two examples motivated by previous works in the area. The examples are three-dimensional analogues of the examples in \cite{BLFD}  (see Fig.~\ref{domains}).  The scatterer $D$ is chosen to be either:
\begin{itemize}
\item A single sphere centered at $(0.5,0.6,0)$ with radius 0.2.
\item Two spheres, the first centered at $(0.5,0.7,0)$ of radius 0.05, and the second centered at $(0.5,0.5,0.5)$ of radius 0.07.
\end{itemize}
%

\subsection{A single Sphere}
First we consider the single sphere of radius 0.2 centered at $(0.5,0.6,0)$.
Of the two examples considered in this paper, this is the most difficult to reconstruct. As discussed above, regularization
is via the truncated singular value decomposition.  In Fig.~\ref{S20} we show the singular values when $k=20$.  In the left panel no extra noise has been added to the data computed by the finite element method. In the right-hand panel we have added random noise with noise parameter $\eta=0.001$  that produces a relative error in the discrete near field matrix of 0.06\%  in the Frobenius norm. {We also show examples for} $\eta=0.01$ which produces an error of 0.6\%. When $k=20$ there are 38 propagating modes.  We choose a spectral cutoff larger than this, restricting to the
first 51 SVD vectors.  With this choice the reconstructions are shown in Fig.~\ref{onesphk20}.  The position of the scatterer along the waveguide is predicted well, but the shape is not obvious from the isovalue plots even for no added noise.

A higher wavenumber results in more propagating modes, and hence possibly more data.  Using $k=25$, when there are 55 propagating modes, 
    gives the singular values shown in Fig.~\ref{S25}.  In this case more singular vectors are significant (compared to the $k=20$ case).  In the case of no noise shown in Fig.~\ref{onesphk25} (left panel) we used 81 singular vectors.  The position and approximate shape of the scatterer are clearly visible. We compare in Fig.~\ref{onesphk25} center and right panels the effect of noise.  {To compute the results shown in Fig.~\ref{onesphk25} (center panel),} we use 81 singular values results in an improved reconstruction compared to Fig.~\ref{onesphk20}.
 \begin{figure}
\begin{center}
\begin{tabular}{cc}
\resizebox{0.45\textwidth}{!}{
\includegraphics{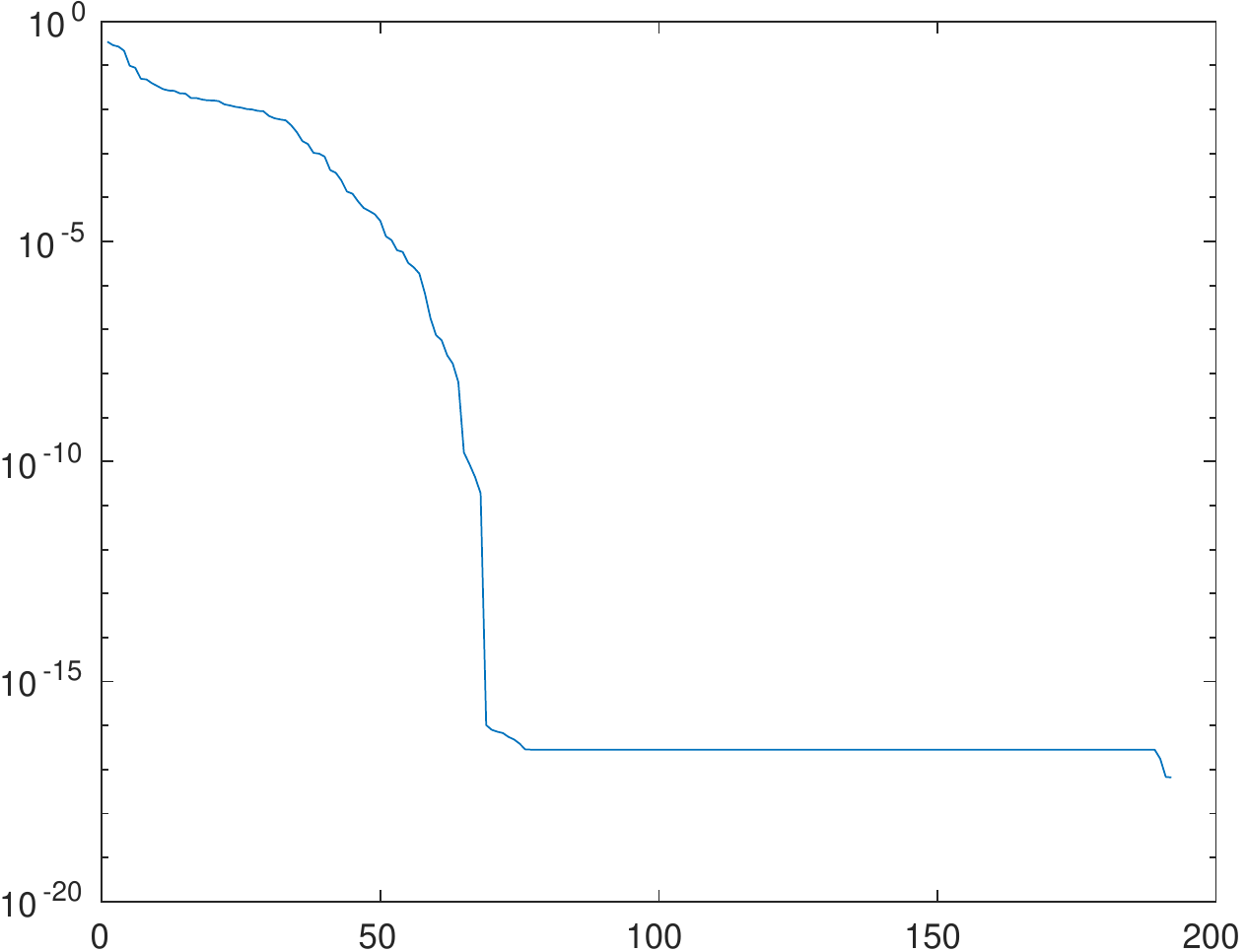}}&
\resizebox{0.45\textwidth}{!}{
\includegraphics{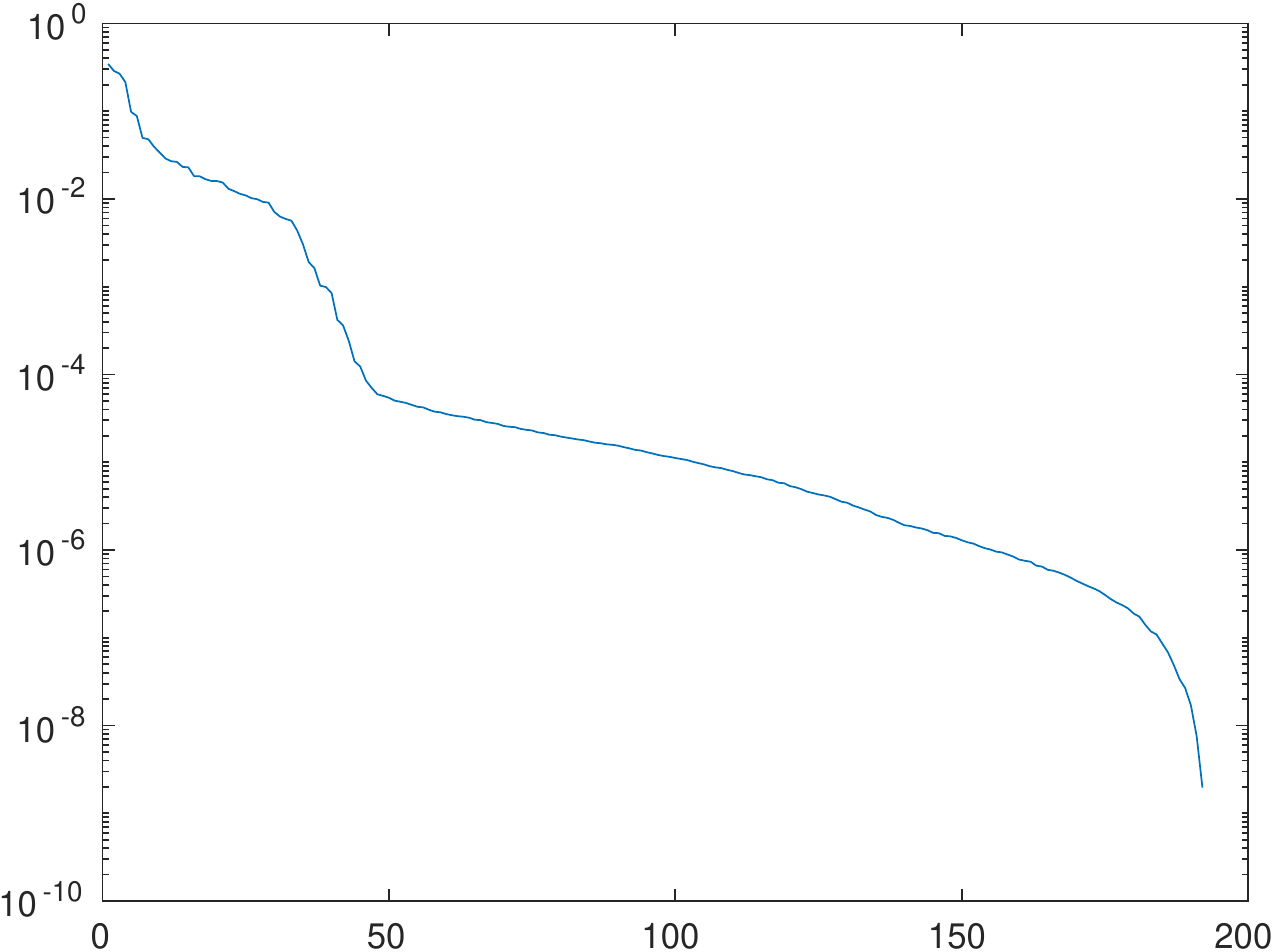}}
\end{tabular}
\end{center}
\caption{Singular values for the single sphere example shown in Fig.~\ref{domains}. The wavenumber is $k=20$.
Left: no noise. 
Right:  with added noise $\eta=0.001$.}
\label{S20}
\end{figure}

\begin{figure}
\begin{center}
\begin{tabular}{ccc}
\resizebox{0.3\textwidth}{!}{
\includegraphics{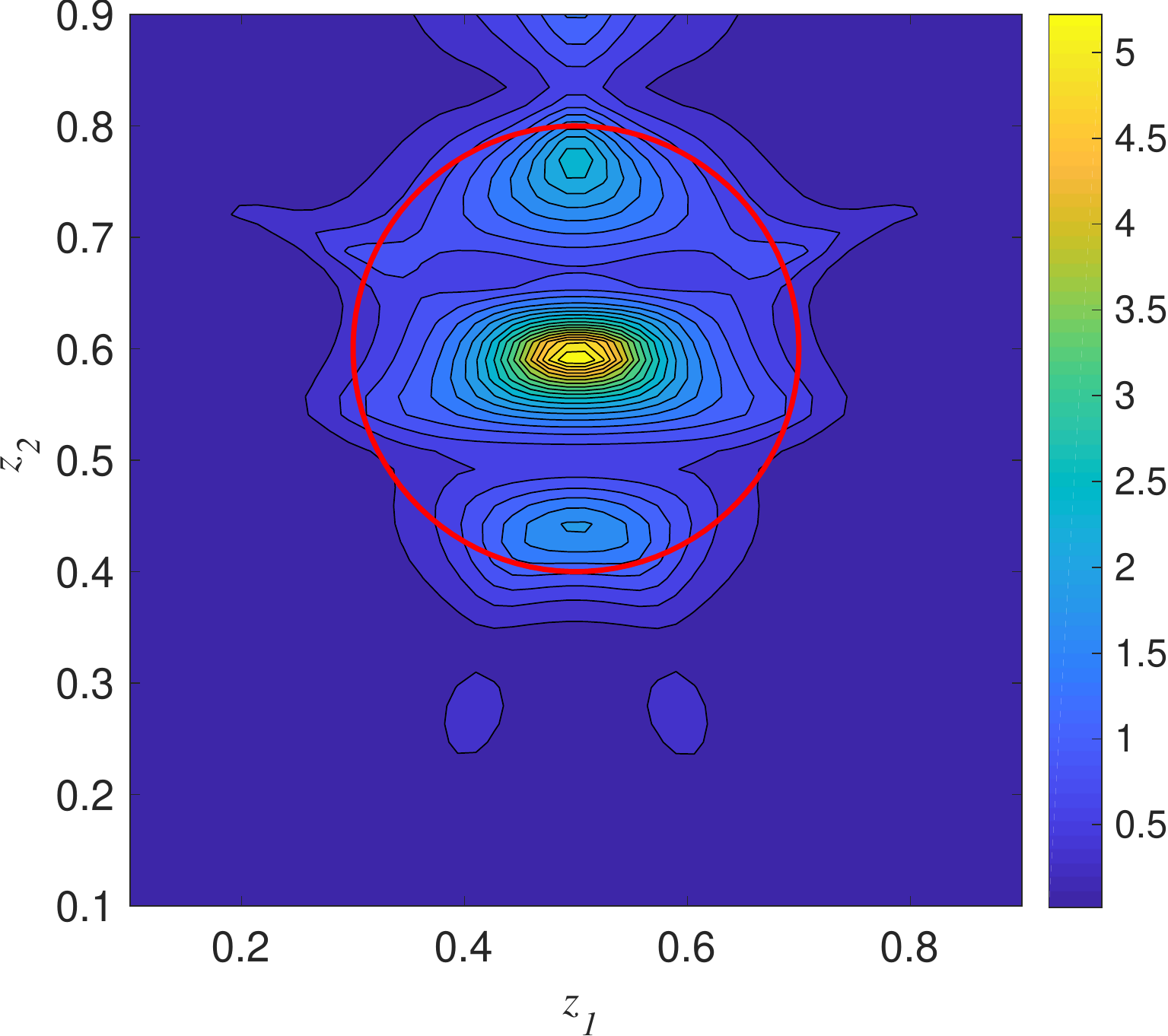}}&
\resizebox{0.3\textwidth}{!}{
\includegraphics{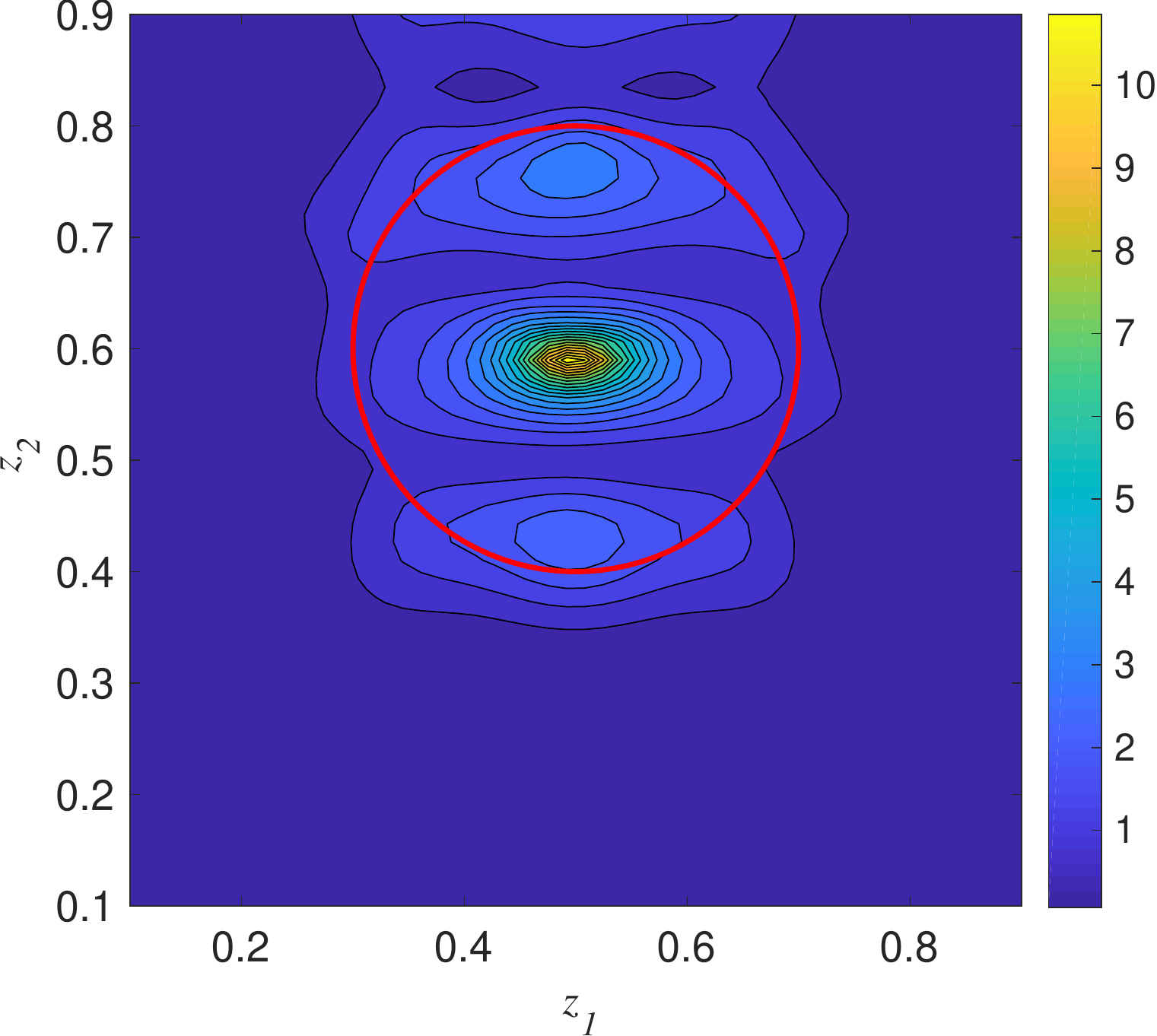}}&
\resizebox{0.3\textwidth}{!}{
\includegraphics{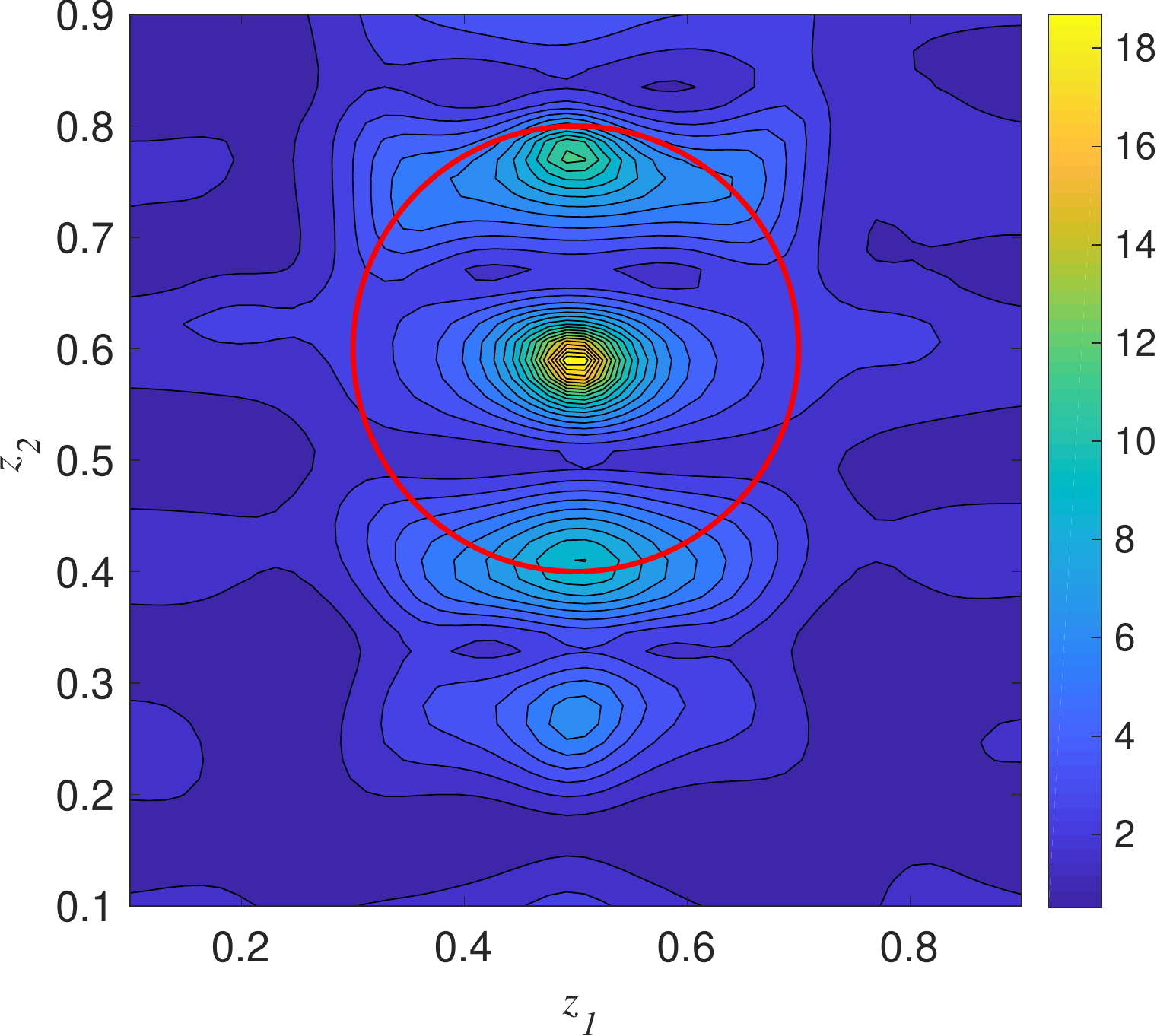}}\\
\resizebox{0.3\textwidth}{!}{
\includegraphics{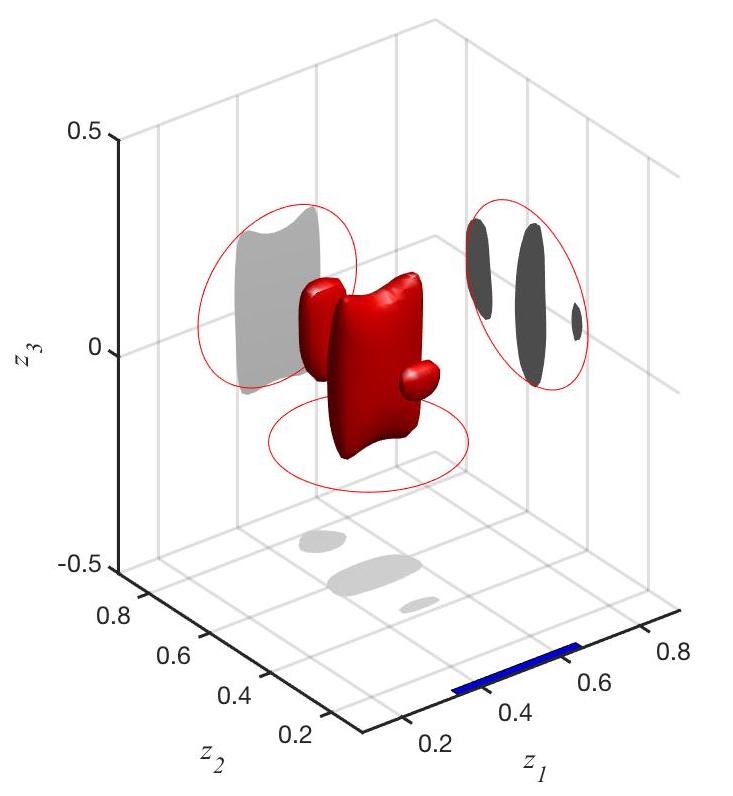}}&
\resizebox{0.3\textwidth}{!}{
\includegraphics{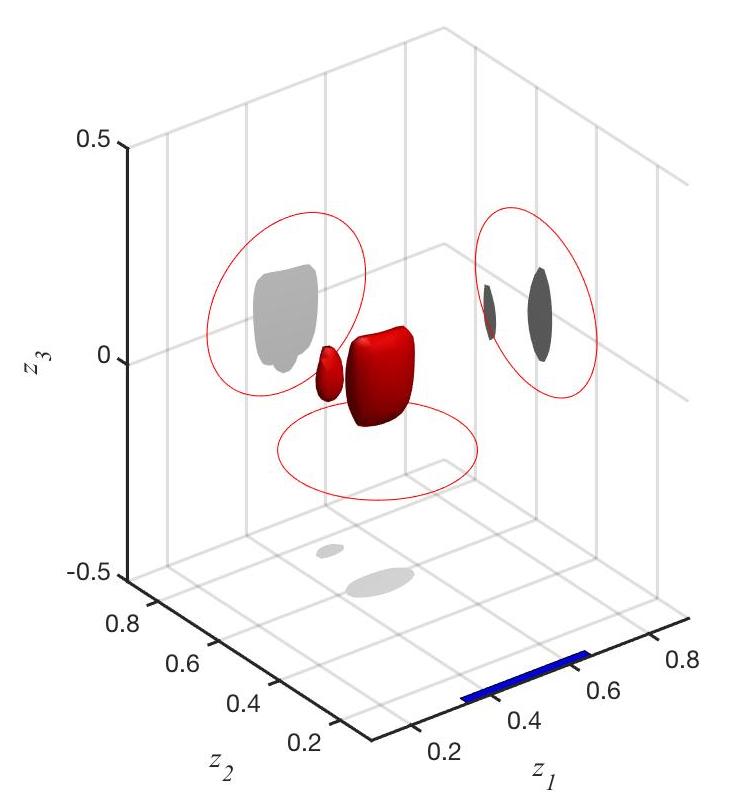}}&
\resizebox{0.3\textwidth}{!}{
\includegraphics{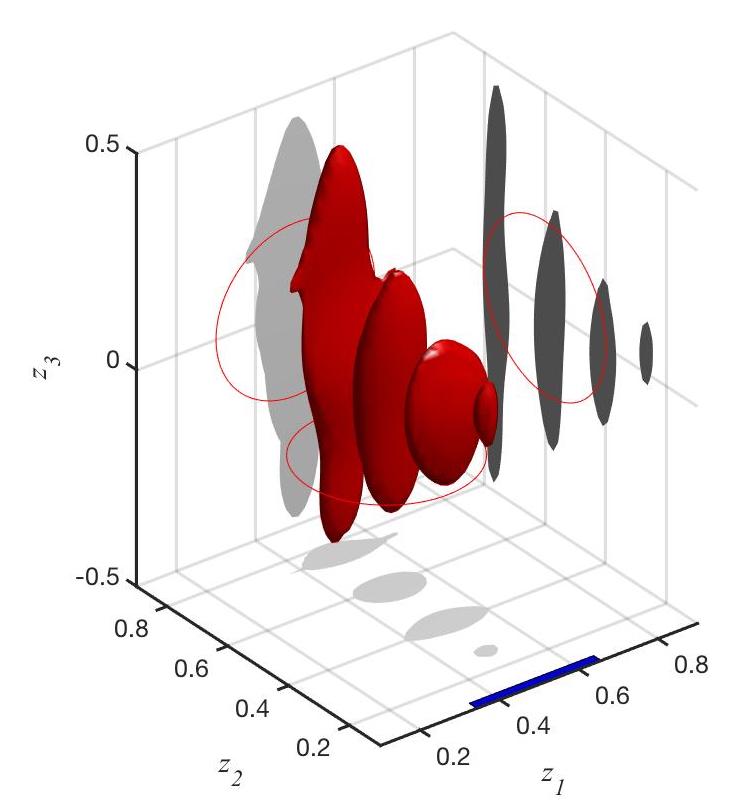}}
\end{tabular}
\end{center}
\caption{Reconstructions of the single sphere example shown in Fig.~\ref{domains} when $k=20$ using 51 singular vectors and the isovalue parameter $C=0.3$.
Top row:  contour plots of the indicator function as a function of $z_1$ and $z_2$ for fixed $z_3=0$.
Bottom Row:  isovalue plots of the indicator function in the search region.  The thick line along the $z_1$ axis shows
the wavelength $2\pi/k$ and the red circle on each coordinate face shows the projection of the scatterer onto the corresponding face.
Left column: no added noise. Center column: with added noise $\eta=0.001$.
Right: with added noise $\eta=0.01$.}
\label{onesphk20}
\end{figure}

\begin{figure}
\begin{center}
\begin{tabular}{cc}
\resizebox{0.45\textwidth}{!}{
\includegraphics{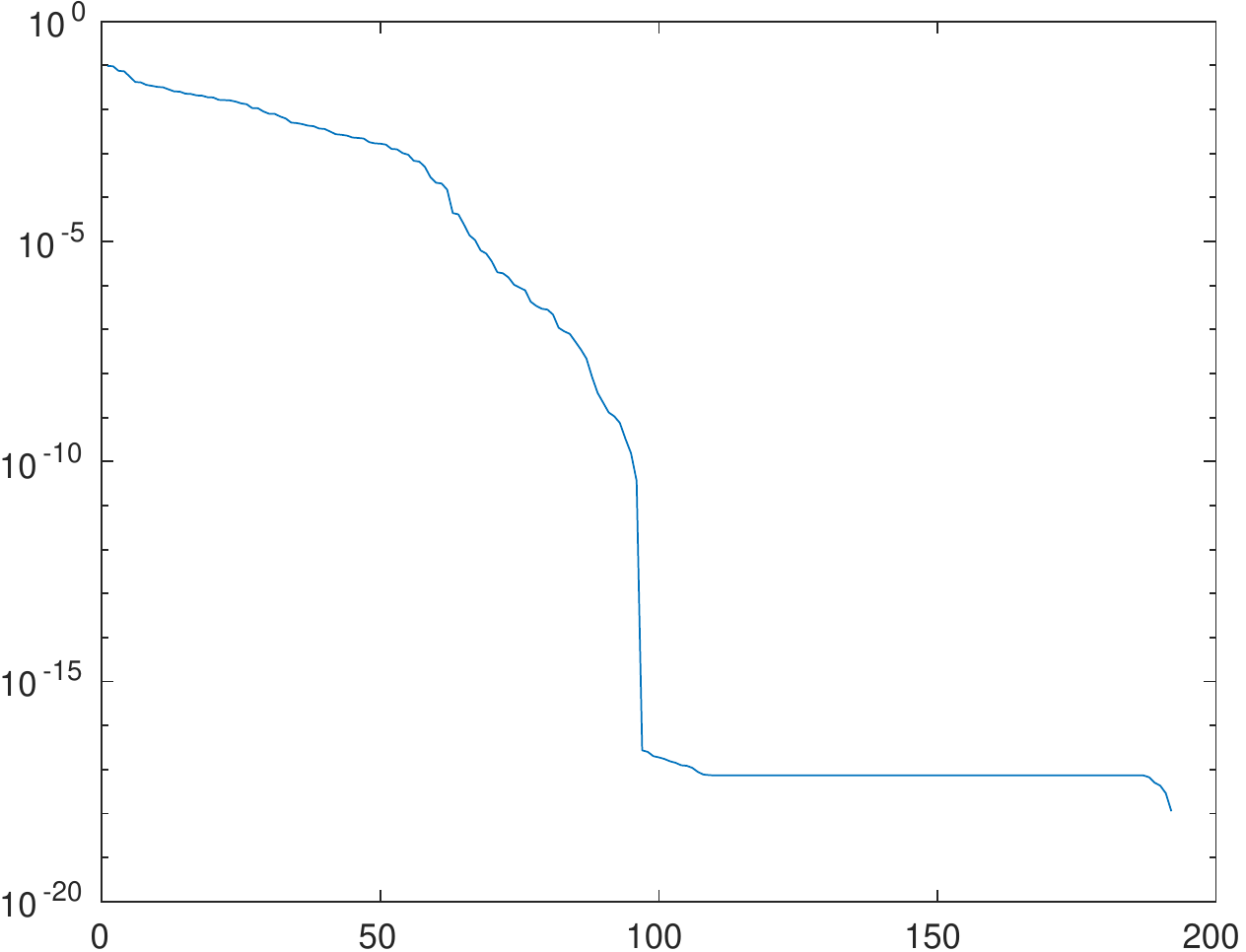}}&
\resizebox{0.45\textwidth}{!}{
\includegraphics{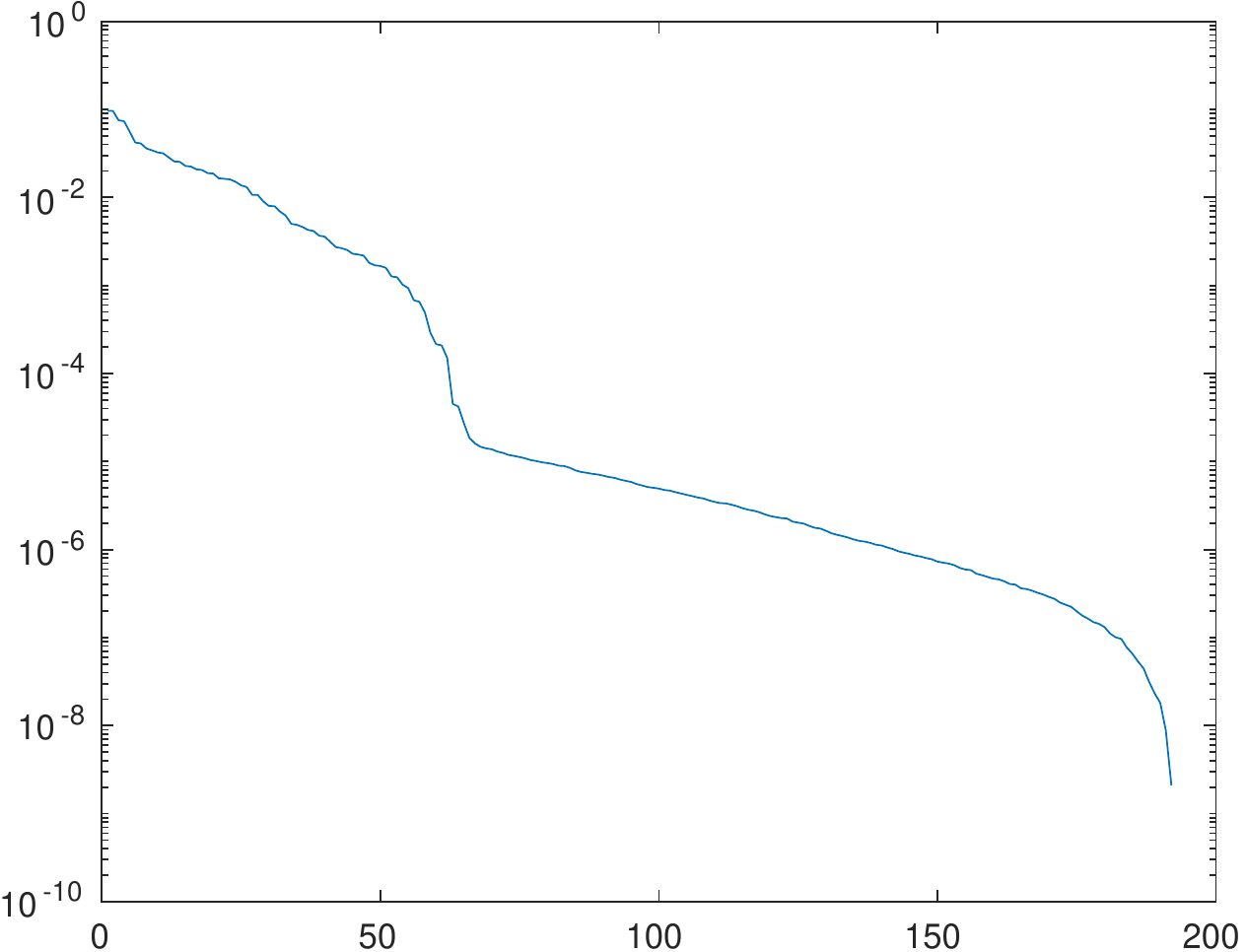}}
\end{tabular}
\end{center}
\caption{Singular values for the single sphere example shown in Fig.~\ref{domains}, left panel, when $k=25$. Left: no added noise. 
Right: with added noise $\eta=0.001$.}
\label{S25}
\end{figure}

\begin{figure}
\begin{center}
\begin{tabular}{ccc}
\resizebox{0.3\textwidth}{!}{
\includegraphics{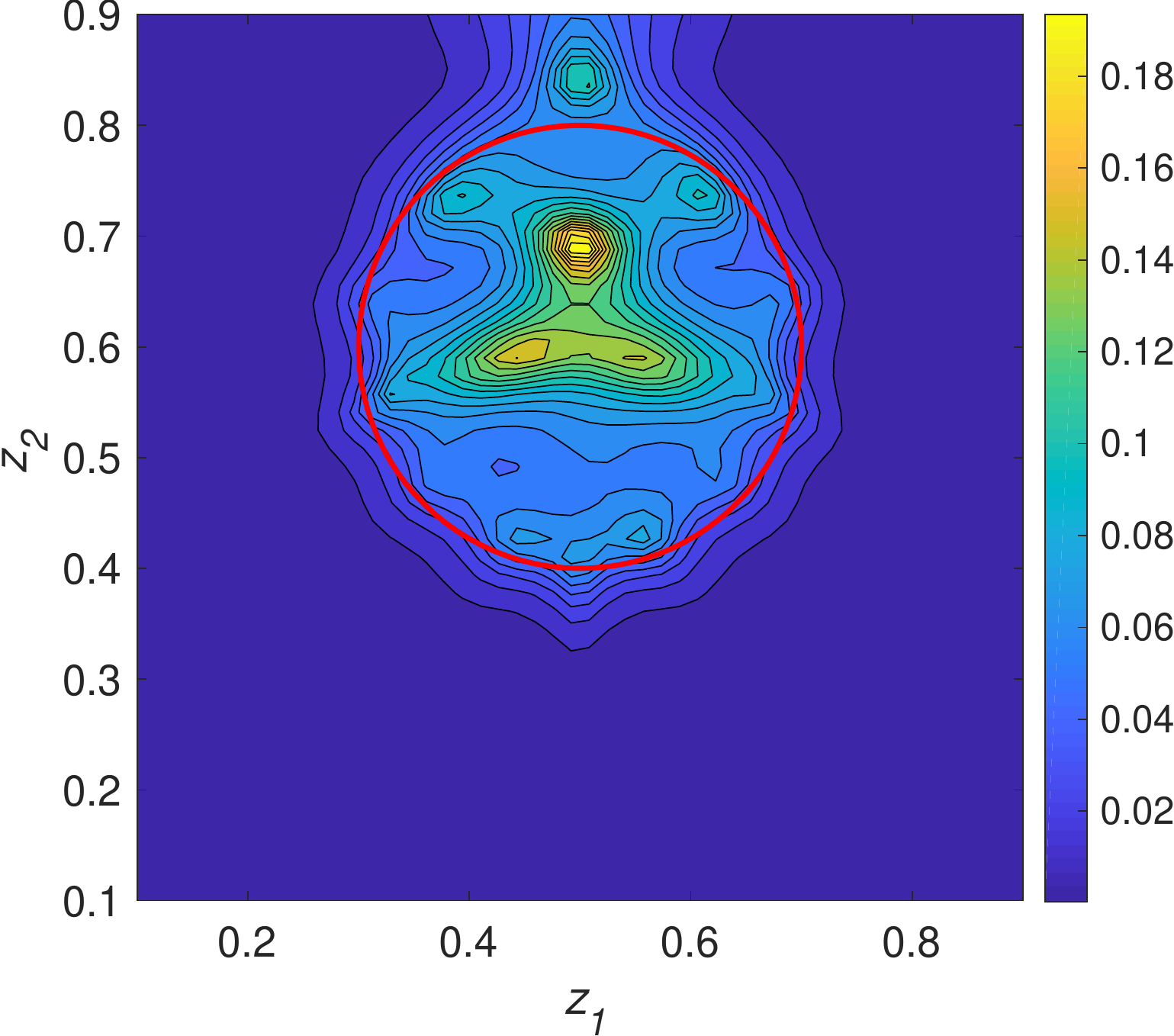}}&
\resizebox{0.3\textwidth}{!}{
\includegraphics{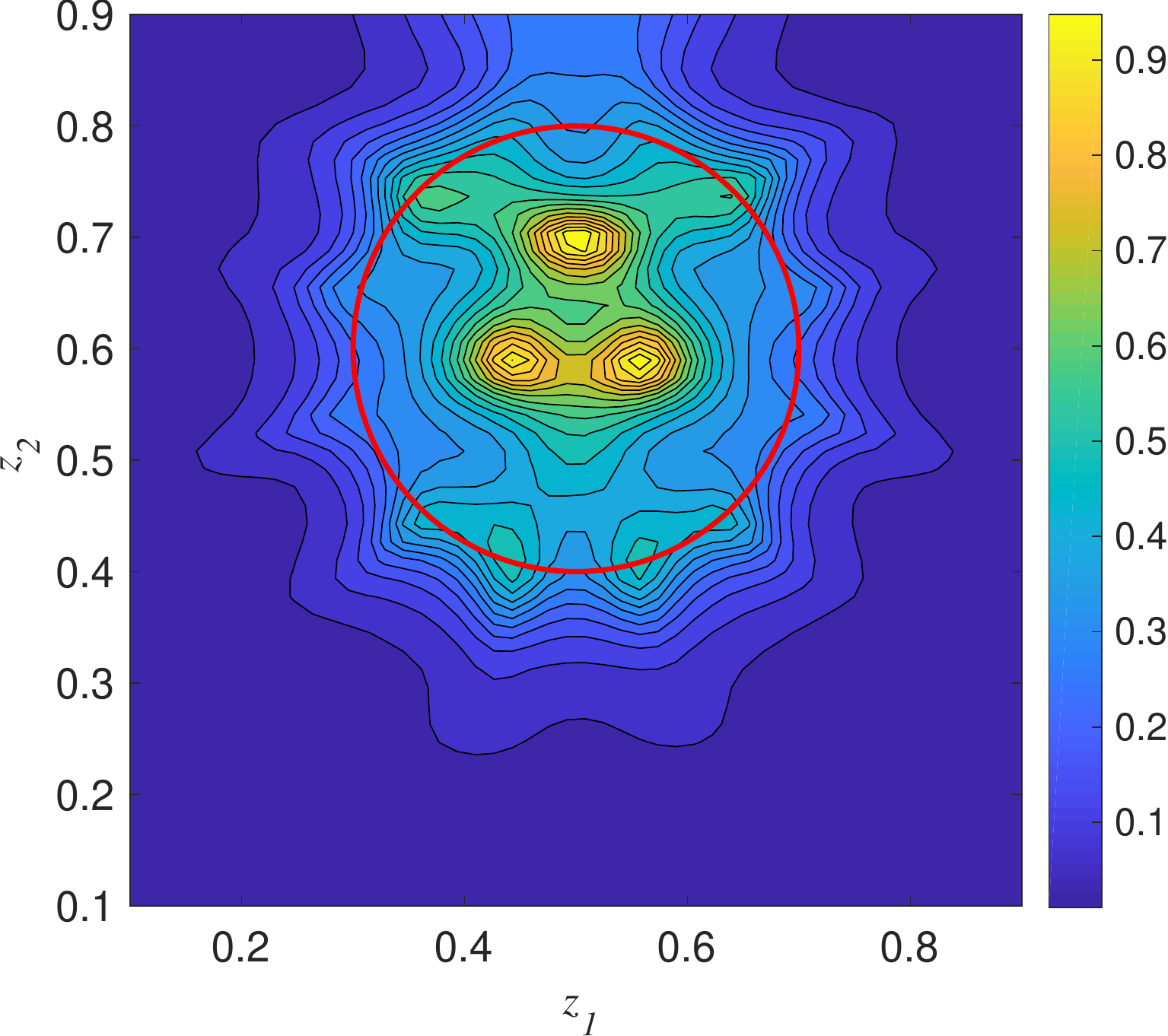}}&
\resizebox{0.3\textwidth}{!}{
\includegraphics{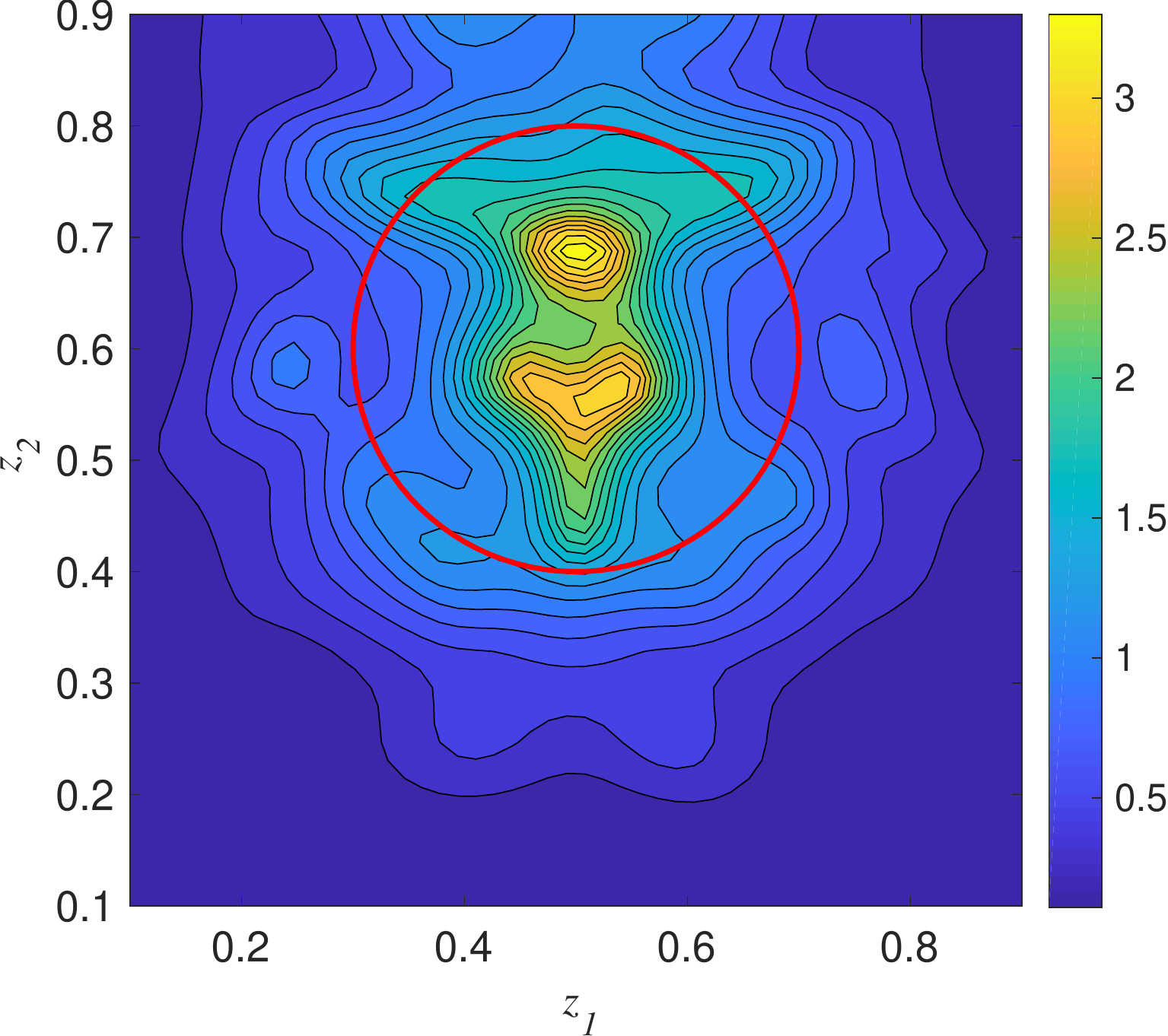}}\\
\resizebox{0.3\textwidth}{!}{
\includegraphics{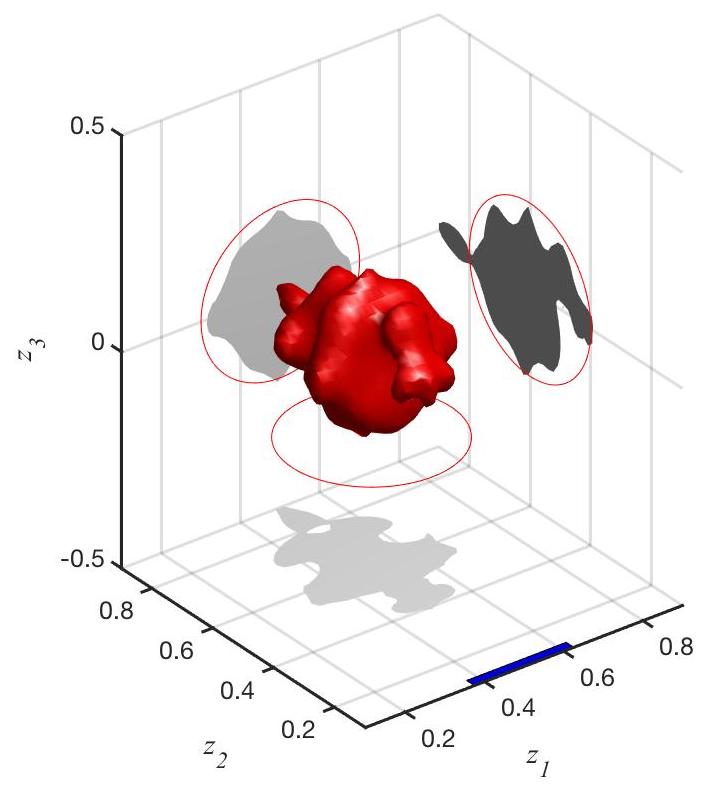}}&
\resizebox{0.3\textwidth}{!}{
\includegraphics{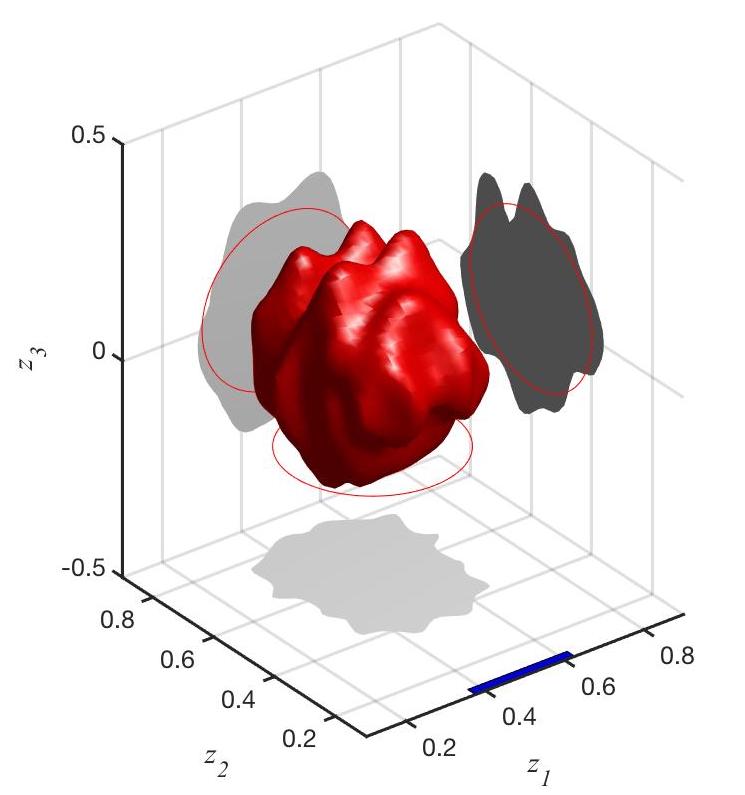}}&
\resizebox{0.3\textwidth}{!}{
\includegraphics{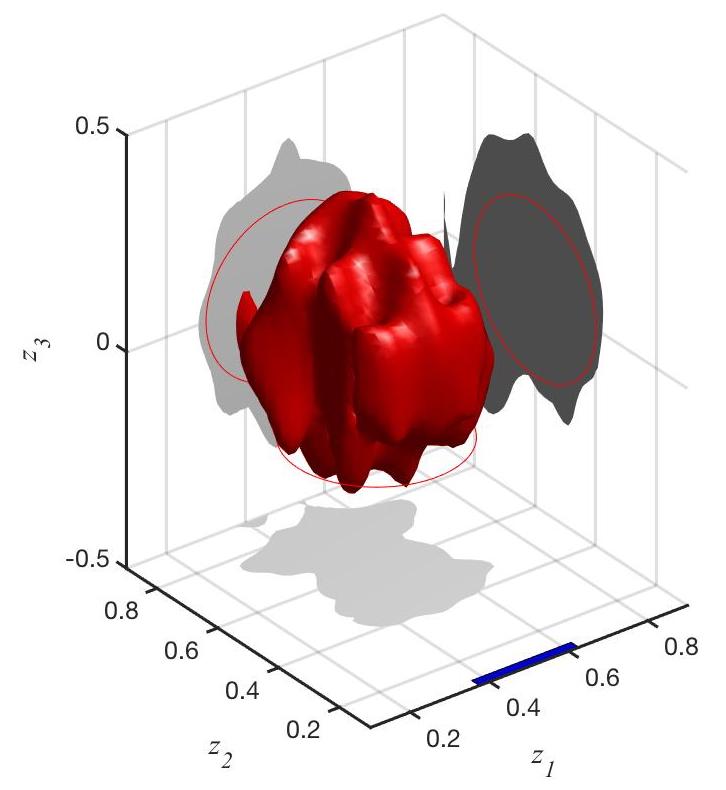}}
\end{tabular}
\end{center}
\caption{Reconstructions of the single sphere example shown in Fig.~\ref{domains} left panel when $k=25$ using
81 singular vectors and the cutoff parameter for the isovalue plots $C=0.3$. For a
description of the features shown in the figures, see the caption of Fig.~\ref{onesphk20}.
Left column: no added noise. Center column: with added noise $\eta=0.001$.
Right column: with added noise $\eta=0.01$.}
\label{onesphk25}
\end{figure}

\subsection{Two spheres}
Next we consider the two spheres example where the exact scatterer is shown in the right-hand panel of Fig.~\ref{domains}.  Perhaps surprisingly, this example can be reconstructed using a lower wavenumber than for the single sphere.  We show results of
reconstructing this scatterer using $k=20$  in Fig.~\ref{twosph}.  
\begin{figure}
\begin{center}
\begin{tabular}{ccc}
\resizebox{0.3\textwidth}{!}{
\includegraphics{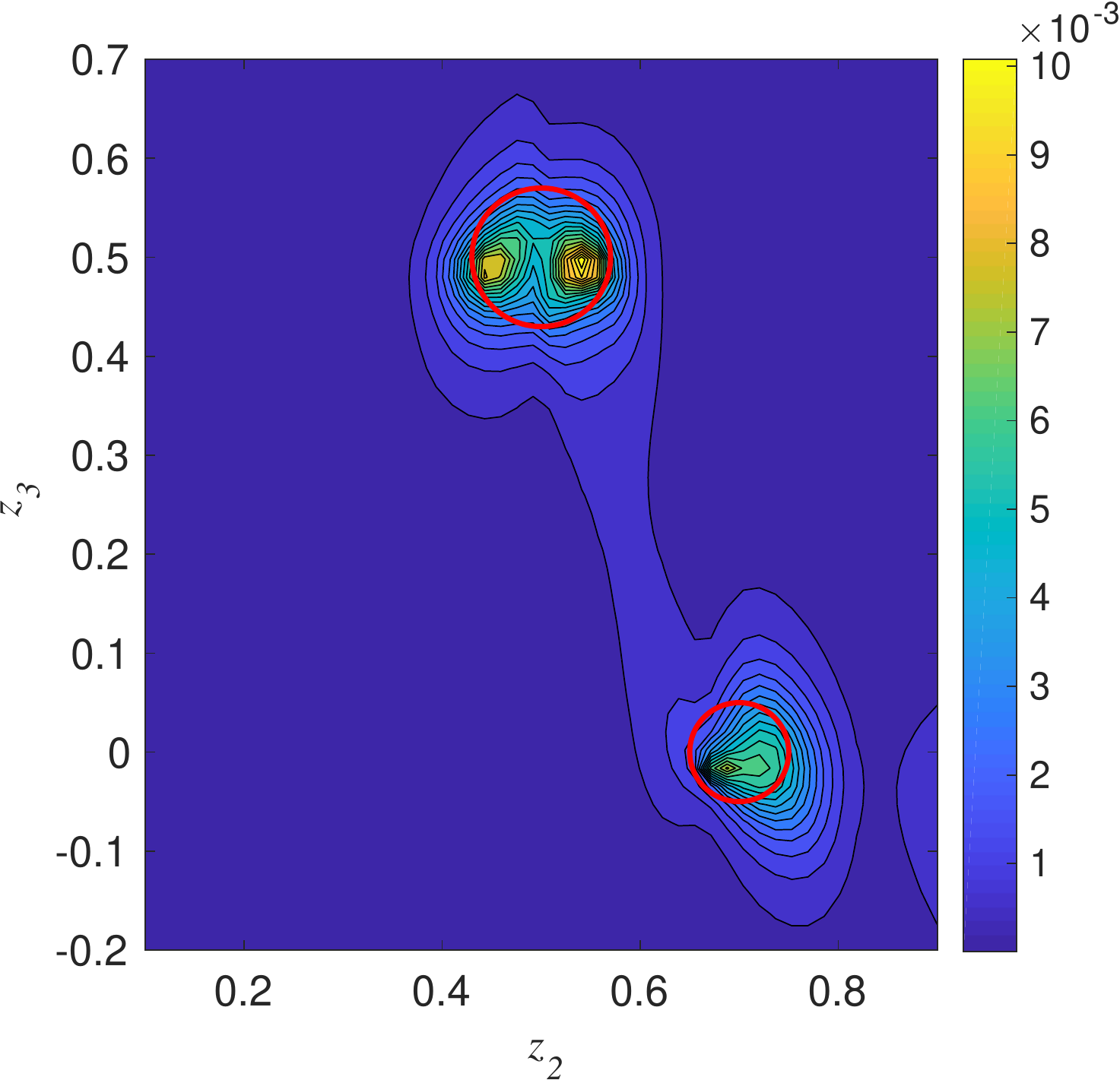}}&
\resizebox{0.3\textwidth}{!}{
\includegraphics{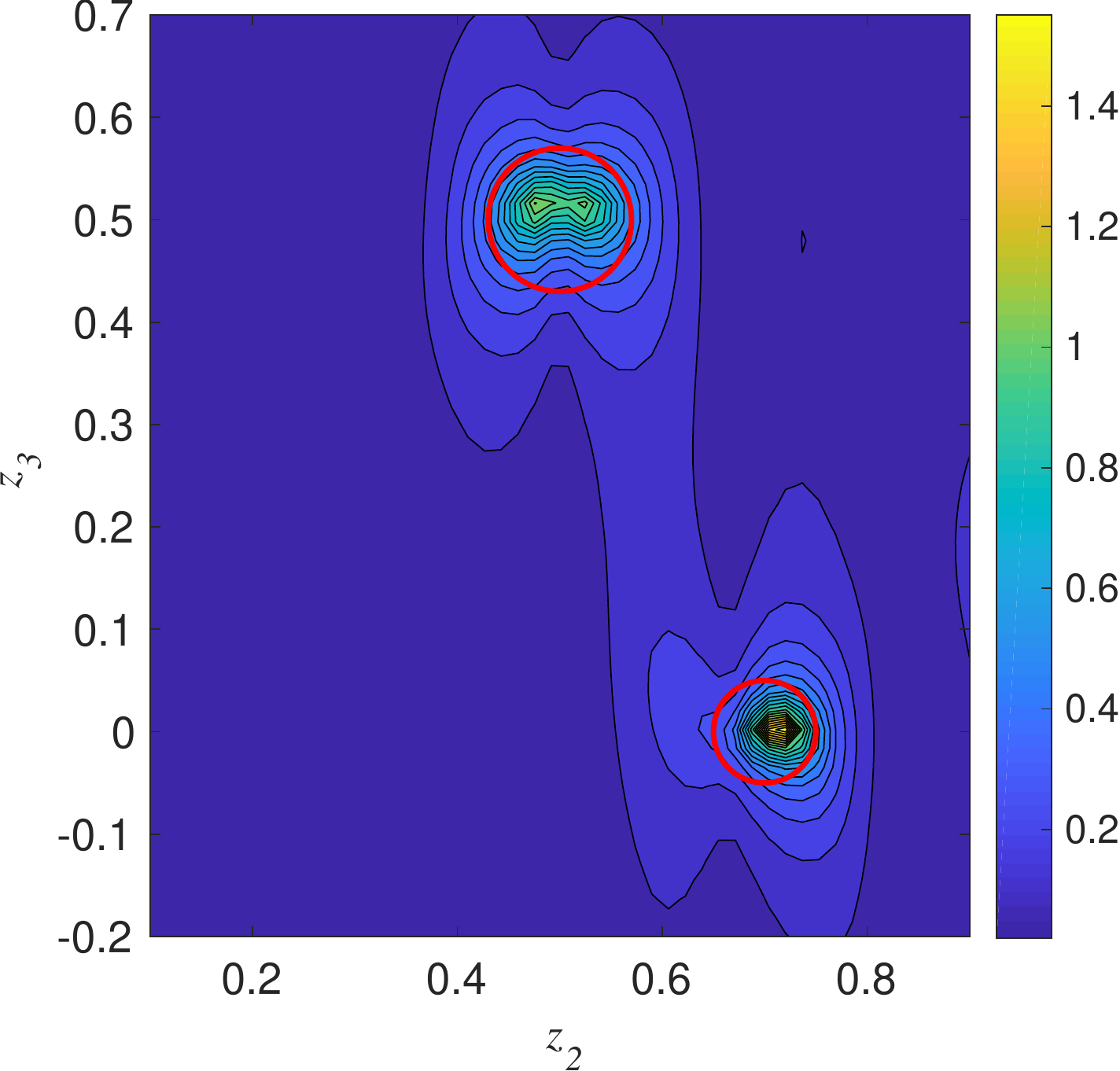}}&
\resizebox{0.3\textwidth}{!}{
\includegraphics{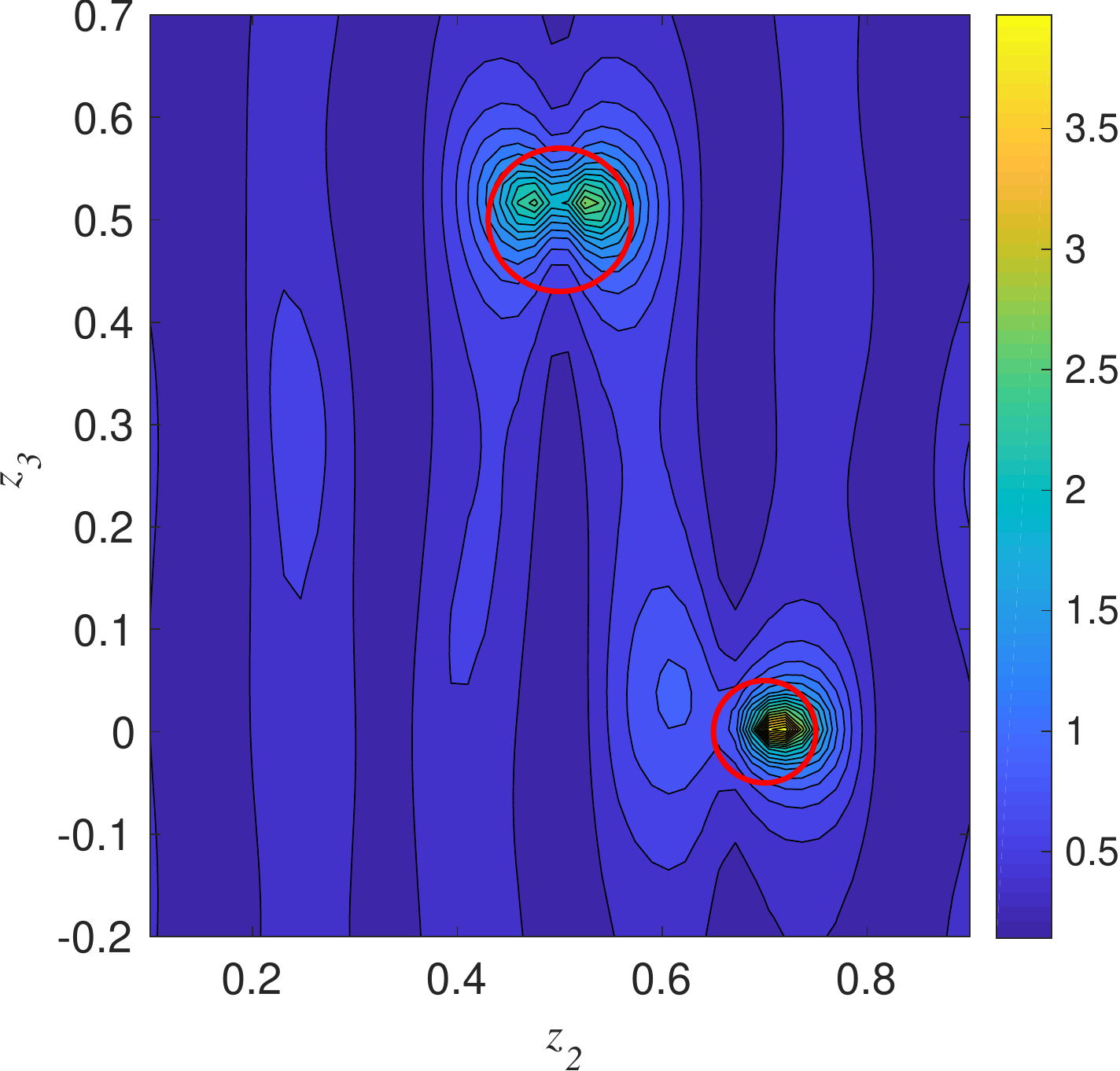}}\\
\resizebox{0.3\textwidth}{!}{
\includegraphics{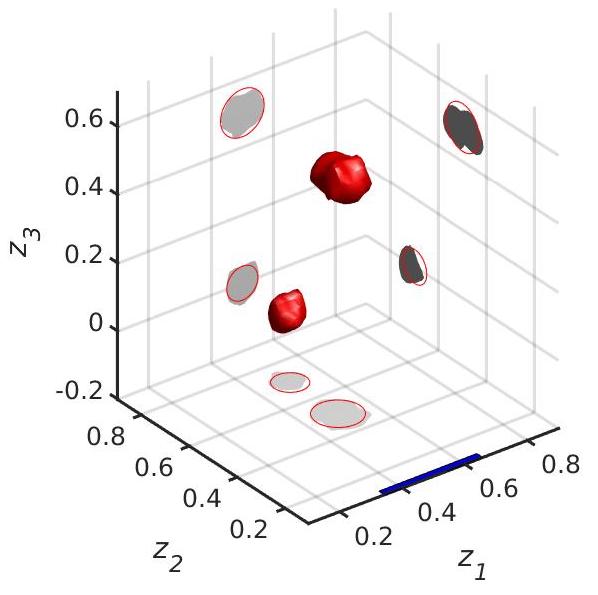}}&
\resizebox{0.3\textwidth}{!}{
\includegraphics{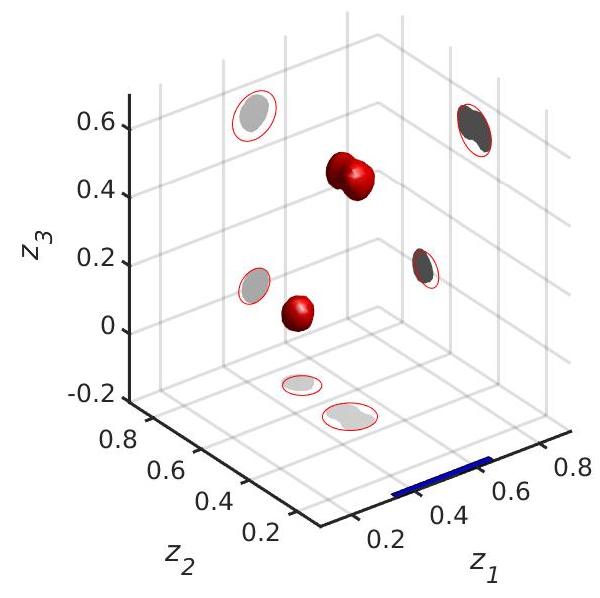}}&
\resizebox{0.3\textwidth}{!}{
\includegraphics{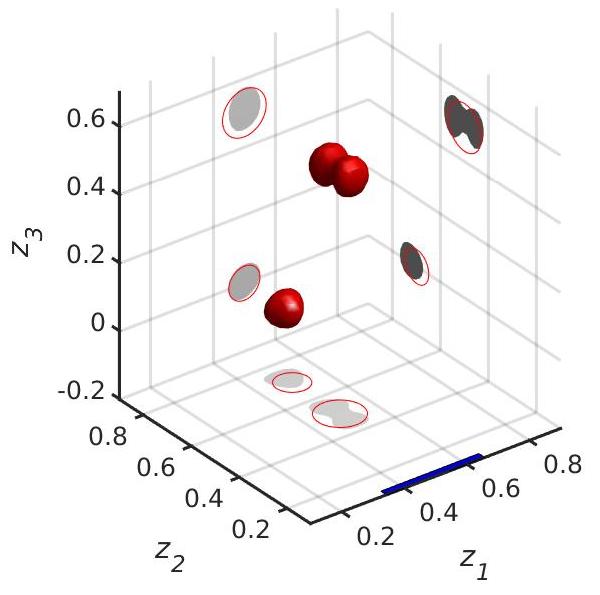}}\\
\end{tabular}
\end{center}
\caption{Reconstructions of the two spheres example shown in Fig.~\ref{domains} right panel when $k=20$ using 56 singular vectors and the isovalue cutoff parameter $C=0.3$. For a
description of the features shown in the figures, see the caption of Fig.~\ref{onesphk20}, {except that the contour
plots are now in the $z_2$-$z_3$ plane.}
Left column: no added noise. 
Center column: with added noise  $\eta=0.001$.
Right column: with added noise $\eta=0.01$.}
\label{twosph}
\end{figure}

\section{Conclusions}\label{Sec-Concl}
 Our analysis and numerical evidence suggests that the LSM can be used to identify the position and size of penetrable obstacles
 in an electromagnetic waveguide.  Clearly the model problem we have examined requires considerable
 elaboration before being useful in applications.  The case 
{when} measurements are made on
 a surface on the opposite side of the obstacle to the receivers could also be investigated (the theory we have {presented} holds in that case as well, but the numerical results in this paper are only for measurement and sources on one side of the obstacle).  However, we suppose that the one sided measurement considered here would be simpler
 in practice.

Although we did not discuss PEC scatterers, exactly the same LSM applies for a PEC or penetrable scatterer.  Theory and numerical results for the PEC case can be found in \cite{FanPhD}.

\section*{Acknowledgments}  The research of P. Monk was partially supported by the Air Force Office of Scientific Research under award number FA9550-17-1-0147,   and that of V. Selgas by project MTM2017-87162-P.

%


\begin{thebibliography}{25}

\bibitem{ASBinvisible} 
Bonnet-Ben Dhia A-S, Chesnel L and Nazarov 2018 Perfect transmission invisibility for waveguides with sound hard walls \textit{J. Math. Pures Appl.} 
\textbf{111} 79--105

\bibitem{BorceaNguyen}
Borcea L and Nguyen D L 2016 Imaging with electromagnetic waves in terminating waveguides \textit{Inverse Probl. Imaging} 
\textbf{10}(4) 915--41

\bibitem{BLFD} 
Bourgeois L and Lun\'eville E 2008 The linear sampling method in a waveguide: a modal formulation \textit{Inverse Probl.} 
\textbf{24} 015018

\bibitem{BourgTD} 
Baronian V, Bourgeois L and Recoquillay A 2016 Imaging an acoustic waveguide from surface data in the time domain \textit{Wave Motion} \textbf{66} 68--87


\bibitem{CakoniColtonMonk-IP}
Cakoni F, Colton D and Monk P 2011 \textit{The Linear Sampling Method in Inverse Electromagnetic Scattering} (\textit{CBMS-NSF Regional Conference Series in Applied Mathematics}) (Philadelphia PA, USA: SIAM)


\bibitem{ChenHuang}
Chen J and Huang G 2018 A Direct Imaging Method for Inverse Electromagnetic Scattering Problem in Rectangular Waveguide \textit{Commun. Comput. Phys.} \textbf{23}(5) 1415--33 


\bibitem{ColtonKress-IP}
Colton D and Kress R 2012 \textit{Inverse Acoustic and Electromagnetic Scattering Theory} (\textit{Applied Mathematical Sciences} vol 93) third edition (New York NY, USA: Springer-Verlag) 


\bibitem{Dalarsson-PhD}
Dalarsson M 2016 \textit{Perturbation approach to reconstructions of boundary deformations in waveguide structures} (Stockholm, Sweden: PhD dissertation, KTH Royal Institute of Technology)

\bibitem{DediuMacLaughlin}
Dediu S and McLaughlin J R 2006 Recovering inhomogeneities in a waveguide using eigensystem decomposition \textit{Inverse Probl.} 
\textbf{22}(4) 1227--46

\bibitem{GrisvardBook}
Grisvard P 1985 \textit{Elliptic problems in non-smooth domains} (Boston MA, USA: Pitman Advanced Pub. Program)

\bibitem{ComputELectromag-Cetraro}
Haddar H, Hiptmair R, Monk P and Rodr\'iguez R 2014 \textit{Computational Electromagnetism: Cetraro, Italy 2014} (\textit{C.I.M.E. Foundation Subseries} vol 2148) ed  Ber\'udez de Castro A and Valli A (Switzerland: Springer International Publishing)

\bibitem{Kirsch-Hettlich} A. Kirsch and F. Hettlich, ``The Mathematical Theory of Time-Harmonic Maxwell's Equations'', Springer, 2015.

\bibitem{MonkBook}
Monk P 2003 \textit{Finite Element Methods for Maxwell's Equations} (New York NY, USA: Oxford University Press, Clarendon Press)

\bibitem{MonkSelgasTD} 
Monk P and Selgas V 2016 An inverse acoustic waveguide problem in the time domain \textit{Inverse Probl.} 
\textbf{32} 055001

\bibitem{MonkSelgasFD}
Monk P and Selgas V 2012 Sampling type methods for an inverse waveguide problem \textit{Inverse Probl. Imaging} 
\textbf{6} 709--47

\bibitem{RouxFinck-utrasonicWG}
Roux P and Fink M 2000 Time reversal in a waveguide: Study of the temporal and spatial focusing \textit{J. Acoust. Soc. Am.} \textbf{107}(5)  2418--29

\bibitem{Rivas} 
Rivas C, Rodr\'iguez R and Solano M E 2018 A perfectly matched layer for finite-element calculations of diffraction by metallic surface-relief gratings \textit{Wave Motion} \textbf{78} 68--82

\bibitem{netgen}
Sch\"{o}berl J 2018 Netgen/{N}gsolv. \texttt{https://ngsolve.org}

\bibitem{Yurys-IPguides}
Shestopalov Y and Smirnov Y 2012 Inverse scattering in guides \textit{Journal of Physics: Conference Series} \textbf{346} 012019

\bibitem{SjobergLarsson}
Sj\"{o}berg D and Larsson C 2015 Material characterization in partially filled waveguides using inverse scattering and multiple sample orientations \textit{Radio Sci.} \textbf{50} 554--61

\bibitem{SunZheng-periodicWG}
Sun J and Zheng C 2013 Reconstruction of obstacles embedded in periodic waveguides \textit{Contemp. Math.} 
\textbf{586} 341--50

\bibitem{ChenToTai}
Tai C-T 1996 \textit{Dyadic Green Functions in Electromagnetic Theory} (\textit{IEEE/OUP Series on Electromagnetic Wave Theory}) (IEEE Press)

\bibitem{FanPhD}
Yang F 2015 \textit{Scattering and inverse scattering in the presence of complex background media} (Newark DE, USA: PhD dissertation, University of Delaware)

\end{thebibliography}
\end{document}